\newcommand\dqd{\foreignlanguage{vietnamese}{Đinh Quý Dương}}
\numberwithin{equation}{section}
\newtheoremstyle{theorem}
{3mm}
{1mm}
{\normalfont\itshape}
{}
{\normalfont\scshape}
{.}
{.5em}
{\thmname{#1}\thmnumber{ #2}\thmnote{ (#3)}}
\theoremstyle{theorem}
\newtheorem{theorem}{Theorem}
\newtheorem{lemma}[theorem]{Lemma}
\newtheorem{proposition}[theorem]{Proposition}
\newtheorem{corollary}[theorem]{Corollary}
\numberwithin{theorem}{section}
\newtheoremstyle{definition}
{3mm}
{1mm}
{\normalfont\normalfont}
{}
{\normalfont\scshape}
{.}
{.5em}
{\thmname{#1}\thmnumber{ #2}\thmnote{ (#3)}}
\theoremstyle{definition}
\newtheorem{remark}[theorem]{Remark}
\newtheorem{definition}[theorem]{Definition}
\newtheorem{example}[theorem]{Example}
\newcommand{\cD}{{\mathcal{D}}}
\newcommand{\cE}{{\mathcal{E}}}
\newcommand{\cL}{{\mathcal{L}}}
\newcommand{\divisor}{{\mathrm{div}}}
\newcommand{\Res}{{\mathrm{Res}}}
\newcommand{\bC}{{\mathbb{C}}}
\newcommand{\bP}{{\mathbb{P}}}
\newcommand{\SOV}{{\mathrm{SoV}}}
\newcommand{\SOVdR}{{\mathrm{SoV_{\lambda}}}}
\newcommand{\connref}{^{\mathrm{ref}}}
\newcommand{\mM}{{\mathcal{M}}}
\newcommand{\M}{\mM_{d}}
\newcommand{\MdRd}{{\mM^d_\lambda}}
\newcommand{\tMdRd}{{\widetilde{\mM^d_{\lambda}}}}
\newcommand{\Nstable}{\mathcal{N}}
\newcommand{\N}{\mathcal{N}_{d}}
\newcommand{\Pic}{\mathrm{Pic}}
\newcommand{\Mop}{{\mM^m_{\mathrm{op}, \lambda}}}
\newcommand{\MHodge}{{\mathcal{M}_{Hod}}}
\newcommand{\bp}{{\mathbf{p} }}
\newcommand{\bq}{{ \mathbf{q} }}
\newcommand{\bqcheck}{\mathbf{\check{q}}}
\newcommand{\bmx}{{ \bm{x} }}
\newcommand{\bmr}{{ \bm{r} }}
\newcommand{\bmk}{{ \bm{k} }}
\newcommand{\blambda}{{ \bm{\lambda} }}
\newcommand{\bkappa}{{ \bm{\kappa} }}
\newcommand{\bmz}{\bm{z}}
\newcommand{\bmzcheck}{\bm{\check{z}}}
\newcommand{\bu}{{ \mathbf{u} }}
\newcommand{\bmu}{{ \bm{u} }}
\newcommand{\tbu}{ \mathbf{\tilde{u}} }
\title{From $\lambda$-connections \\ to $PSL_2(\mathbb{C})$-opers with apparent singularities}
\author{\dqd}
\date{}
\begin{document}
\maketitle

\begin{abstract}
On a Riemann surface of genus $> 1$,
we discuss how to construct opers with apparent singularities
from $SL_2(\mathbb{C})$ $\lambda$-connections $(E, \nabla_\lambda)$ 
and sub-line bundles $L$ of $E$.
This construction defines a rational map from a space
which captures important data of triples $(E, L, \nabla_\lambda)$ to a space
which parametrises the positions and residue parameters of the induced 
apparent singularities.
We show that this is a Poisson map with respect to natural Poisson structures.
The relations to wobbly bundles and Lagrangians in the moduli spaces of Higgs bundles 
and $\lambda$-connections are discussed.
\end{abstract}

\section{Introduction}
Let $X$ be a compact connected Riemann surface of genus $g \geq 2$, 
and $G$ a simple complex Lie group.
The de Rham moduli space $\mM_{dR}$ of irreducible holomorphic $G$-connections $(E, \nabla)$
on $X$ carries a hyperkähler structure,
and in particular has an associated twistor space $\mM_{dR} \times \bP^1$.
The fiber $\mM_\lambda$ over a fixed $\lambda \in \bC$ is the moduli space of 
irreducible $\lambda$-connections $(E, \nabla_\lambda)$.
In particular, $\mM_1$ can be identified with $\mM_{dR}$
and $\mM_0$ with the Hitchin moduli space $\mM_H$ of $G$-Higgs bundles 
$(E, \phi)$ on $X$.
The Hodge moduli space $\mM_{Hod}$,
which is the restriction of the twistor space to $\bC \subset \bP^1$,
provides an interpolation between these moduli spaces 
and hence a framework for the heurestic that 
a holomorphic connection is a deformation of a Higgs bundle.

In this paper, for $G=SL_2(\bC)$, 
we will describe $\lambda$-connections $(E,\nabla_\lambda)$ and the associated moduli 
spaces using sub-line bundles $L$ of the underlying rank-2 bundles $E$. 
Central to our consideration will be the moduli space $\N$ of pairs $(E, L)$,
where the rank-2 bundles $E$ have trivial determinant and $ \deg(L) = d$ is fixed.   
There is a natural forgetful map from $\N$ to the moduli space $\Nstable$
of rank-2 bundles with trivial determinant.
In the range $0 < -2d \leq g-1$, the generic fibers of the forgetful map from $\N$ 
to the moduli space $\Nstable$ of stable bundles are finite,
making $\N$ a very useful auxiliary space to explicitly investigate $\Nstable$ 
and more generally the Segre strata on $\Nstable$.

We will consider bundles on $\N$ that capture important data 
of triples $(E, L, \nabla_\lambda)$ in this paper.
For $\lambda = 0$, i.e. the case of Higgs bundles,
it is the cotangent space $T^\ast \N$ that captures important data of triples $(E, L, \phi)$. 
For example, for $-2d = g-1$, away from the loci where $E$ is unstable 
or has nilpotent Higgs fields with kernel $L$, we have $T^\ast \N$ 
as the moduli space of triples $(E, L, \phi)$ with $E$ stable. 
In \cite{DT1},  
the author with collaborator have constructed a rational map
\begin{align}
	&\SOV: T^\ast \N \dashrightarrow (T^\ast X)^{[m]},
	&m=2g-2 -2d.
\end{align}
\begin{theorem} \cite{DT1} \label{thm-SOV-Higgs}
	$\SOV$ is a dominant Poisson map with respect to the natural 
	Poisson structure of cotangent bundles.
\end{theorem}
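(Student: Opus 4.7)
The plan is to unpack $\SOV$ explicitly, establish dominance by a dimension count together with an explicit construction of preimages, and then verify the Poisson property via a local symplectic comparison. Concretely, for $(E, L, \phi) \in T^\ast \N$, the composition $L \hookrightarrow E \xrightarrow{\phi} E \otimes T^\ast X \twoheadrightarrow (E/L) \otimes T^\ast X$ produces, using the triviality of $\det E$ to identify $E/L \cong L^{-1}$, a section $s$ of $L^{-2} \otimes T^\ast X$, a line bundle of degree $m = 2g-2-2d$. Its zeros $x_1, \ldots, x_m$ are precisely the points where $\phi$ preserves $L$, and at each $x_i$ the eigenvalue $p_i := \phi|_L(x_i) \in T^\ast_{x_i} X$ is well-defined. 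The output $\{(x_i, p_i)\} \in (T^\ast X)^{[m]}$ then gives the rational map $\SOV$.

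For dominance, the dimensions are $\dim T^\ast \N = 6g-6$ and $\dim (T^\ast X)^{[m]} = 2m = 4g-4-4d$, and they satisfy $\dim T^\ast \N \geq \dim (T^\ast X)^{[m]}$ precisely in the range $0 < -2d \leq g-1$, with equality in the critical case $-2d = g-1$. Dominance can be verified by constructing a rational inverse over a dense open set: given generic $\{(x_i, p_i)\}$, choose a sub-line bundle $L$ of degree $d$ so that $L^{-2} \otimes T^\ast X$ has divisor $\sum x_i$, choose a sufficiently general extension $0 \to L \to E \to L^{-1} \to 0$ so that $E$ is stable, and define $\phi$ to be the Higgs field whose leading behaviour at each $x_i$ is prescribed by $p_i$.

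For the Poisson property, the plan is to work in local coordinates on $T^\ast \N$ adapted to the forgetful map $\N \to \Nstable$, and to show that $\SOV$ pulls the tautological Liouville 1-form $\sum p_i\, dx_i$ of $(T^\ast X)^{[m]}$ back to the Liouville 1-form of $T^\ast \N$, modulo exact terms. The essential point is that a first-order deformation of an apparent singularity $x_i$ corresponds infinitesimally to a deformation of the sub-line bundle $L$ localized near $x_i$, and the Serre-dual pairing of such a deformation against $\phi$ recovers precisely the eigenvalue $p_i$. A more structural alternative is to recognize $\SOV$ as a Sklyanin-type separation-of-variables map for the Hitchin integrable system on $T^\ast \N$, in which case the Poisson property follows from general principles once the separated variables have been identified.

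The main obstacle will be the matching of 1-forms sketched above: it requires careful tracking of how infinitesimal deformations of the pair $(E, L)$ near each apparent singularity Serre-dualize against $\phi$ to produce the canonical momentum $p_i$, and in particular handling the contribution of deformations of $E$ itself (not just of the sub-line bundle) so that they pair trivially against Higgs fields which preserve $L$ at $x_i$. Once this local symplectic identification is in hand, dominance and well-definedness reduce to direct algebro-geometric bookkeeping.
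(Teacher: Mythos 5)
Your description of the map itself is correct and agrees with the paper: the composition $L\hookrightarrow E\xrightarrow{\phi}E\otimes K\to L^{-1}K$ gives the section $c(E,L,\phi)$ of $KL^{-2}$ whose zeroes are the positions $u_n$, and the eigenvalue $a(u_n)$ of $\phi|_L$ there is the cotangent coordinate (this is the Baker--Akhiezer divisor, Definition \ref{def-BA divisor-formal} and \eqref{residue-Higgs}). Two parts of the argument, however, do not hold up. First, the dimension count is wrong: $\N$ is a $\bP^{N-1}$-bundle over $\Pic^d$ with $N=h^1(L^2)=g-1-2d$, so $\dim T^\ast\N=2(2g-2-2d)=2m=\dim(T^\ast X)^{[m]}$ throughout the range $0<-2d\leq g-1$; it equals $6g-6$ only in the critical case $-2d=g-1$. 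Second, and more seriously, the proposed rational inverse fails as stated: one cannot first ``choose a sufficiently general extension'' $E$ and then realize prescribed generic positions. For a fixed extension class $\bmx$, Corollary \ref{cor-Serre-duality-0} forces $c_\bmx(\phi)$ into the hyperplane $\ker(\bmx)\subset H^0(X,KL^{-2})$, so the achievable position divisors sweep out only a $\bP(\ker(\bmx))$, of dimension $g-3-2d$, which is nowhere dense in $X^{[m]}$. The logic has to run the other way: the data $\{(u_i,p_i)\}$ determines a spectral curve $S$ and a divisor $\tbu$ on it, and then $E$, the embedding $L\hookrightarrow E$ and $\phi$ are reconstructed as the direct image of $\mathcal{O}_S(\tbu)$ twisted by $\pi^\ast L$. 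This is Theorem \ref{thm-BA}, which is the paper's dominance argument and also yields that the generic fibers are finite of cardinality $2^{2g}$ (the choice of square root of $K\otimes\mathcal{O}_X(-\sum u_i)$), rather than positive-dimensional as your inequality of dimensions would suggest.

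For the Poisson property you give only a plan, and the step you yourself flag as ``the main obstacle'' --- matching the Liouville forms via Serre duality of infinitesimal deformations --- is precisely the content of the theorem; the fallback appeal to ``general principles'' for separation-of-variables maps is circular, since canonicity of the separated variables is exactly what must be proved. The paper's route is an explicit computation: Darboux coordinates $(\bmx,\blambda,\bmk,\bkappa)$ on $T^\ast\M$ are introduced via a reference divisor and identified with evaluations of the Abelian differentials $\phi_0,\phi_+$ (Proposition \ref{prop-coordinate-Higgs}); $\phi_+$ is written through prime forms as in \eqref{phi+-prime-form} and $\phi_0$ through normalized third-kind differentials as in Section \ref{sect-abelian-diff}; and the brackets $\{u_n,v_m\}=\delta_{nm}$, $\{u_n,u_m\}=\{v_n,v_m\}=0$ are verified directly (the $\lambda=0$ case of Lemma \ref{lemma-crucial}), after which one descends through the symplectic reduction from $T^\ast\M$ to $T^\ast\N$. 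Your 1-form strategy could in principle be made to work, but as written it contains no computation, so the proposal does not establish the Poisson property.
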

\noindent
The map $\SOV$ generalises Sklyanin's Separation of Variables approach 
for the Gaudin model, which can be regarded as a variant of the Hitchin moduli space 
-- this explains our notation for the map.
This map can also be regarded as the classical limit of Drinfeld's approach 
to the geometric Langlands correspondence \cite{Drinfeld, Fre95}.

\paragraph{Main results}
The main goal of this paper is to construct the generalisation of $\SOV$ and 
Theorem \ref{thm-SOV-Higgs} for $\lambda$-connections. We will consider 
an affine bundle $\MdRd$ on $\N$ modeled over $T^\ast \N$;
it is the analogue of the restriction 
to $\Nstable$ of the moduli space $\mM_\lambda$, which is an affine bundle
modeled over $T^\ast \Nstable$.
The main result of this paper is an affine analogue of Theorem \ref{thm-SOV-Higgs}.
\begin{theorem}\label{thm-SOV-dR-1}
    There exists a rational dominant
    Poisson map with respect to natural Poisson structures
    \begin{align}
        &\SOVdR: \MdRd \dashrightarrow \Mop,
        &m = 2g-2 - 2d,
    \end{align}
    where $\Mop$ is the symmetric product of an affine bundle 
    modeled over $T^\ast X$.
\end{theorem}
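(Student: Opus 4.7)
The plan is to mimic the construction of \cite{DT1} in the $\lambda$-connection setting and then deduce the Poisson property from a combination of deformation across $\MHodge$ and a local Darboux-type calculation in coordinates adapted to the sub-line bundle $L$.

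For the construction of $\SOVdR$, given a triple $(E, L, \nabla_\lambda)$ with $L \subset E$ a sub-line bundle, I would look at the $\lambda$-twisted second fundamental form $\beta_\lambda: L \to (E/L)\otimes K_X$ obtained by composing the inclusion $L \hookrightarrow E$, the $\lambda$-connection $\nabla_\lambda$, and the projection $E \otimes K_X \to (E/L) \otimes K_X$. Since $\det E = \mathcal{O}_X$, the line bundle $L^{-1}\otimes (E/L) \otimes K_X$ has degree $m = 2g-2-2d$, so the zero divisor of $\beta_\lambda$ gives a point $\bp = p_1 + \dots + p_m$ in the symmetric product $X^{[m]}$ — the positions of the apparent singularities. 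For the residue parameters, I would pick a local trivialisation of $E$ adapted to $L$ near each $p_i$ and reduce $\nabla_\lambda$ to an oper-like normal form, i.e. a second-order scalar operator $\lambda^2\partial^2 + \lambda\,a(z)\partial + b(z)$ between suitable line bundles; the lower-order jet of this operator at $p_i$ produces the residue parameter. For $\lambda \neq 0$ the presence of the $\lambda\,a(z)\partial$ term forces this parameter to live in an affine torsor over $T^\ast X|_{p_i}$ rather than in $T^\ast X|_{p_i}$ itself, and the collection of these torsors is precisely the affine bundle whose symmetric product is $\Mop$. Specialising to $\lambda = 0$ recovers the Higgs map $\SOV$.

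Well-definedness and dominance I would handle exactly as in the Higgs case: the affine structure of $\MdRd$ over $\N$ matches, fibrewise, the affine structure of $\Mop$ over $X^{[m]}$, the map is rational in $\bp$ and linear on affine fibres, and a dimension count (inherited from \cite{DT1} since the affine models have the same rank) together with an explicit local inversion — reconstructing a $\lambda$-connection with prescribed second fundamental form and prescribed local normal form at each $p_i$ — establishes that a generic point of $\Mop$ has non-empty preimage.

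The Poisson property is the main obstacle and the point where genuinely new work is required. My preferred approach is to upgrade $\SOVdR$ to a family map $\SOV_{\mathrm{Hod}}: \MHodge|_{\N} \dashrightarrow \mathcal{M}^m_{\mathrm{op},\mathrm{Hod}}$ compatible with specialisation at each $\lambda \in \bC$. Both total spaces carry relative Poisson structures over the $\lambda$-line that are holomorphic in $\lambda$ and that restrict at $\lambda = 0$ to the cotangent Poisson structures appearing in Theorem \ref{thm-SOV-Higgs}; since the Poisson identity is closed under analytic continuation in $\lambda$, the truth of the statement for $\lambda = 0$ from \cite{DT1} reduces the problem to checking that $\SOV_{\mathrm{Hod}}$ does intertwine the relative Poisson bivectors. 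This intertwining should in turn be verified by a direct computation in Darboux-type coordinates furnished by the oper normal form on the target and an $L$-adapted affine trivialisation of $\MdRd$ on the source. The technical difficulty will be controlling the non-flatness contributions of $L$ for $\lambda \neq 0$, which do not enter the Higgs calculation and which threaten to introduce obstructing curvature terms in the local Poisson bracket formulas. I expect the right way around this is to fix, canonically on each pair $(E,L)$, a reference $\lambda$-connection $\nabla^{\mathrm{ref}}_\lambda$ to trivialise the affine bundle $\MdRd$ — the natural candidate is a connection that is diagonal with respect to an extension splitting of $L \subset E$ — so that the computation reduces, after absorbing the curvature of this reference into the definition of the oper-side coordinates, to the Poisson identity already established in \cite{DT1}.
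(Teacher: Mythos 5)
Your construction of $\SOVdR$ is essentially the paper's: your ``$\lambda$-twisted second fundamental form'' $\beta_\lambda$ is exactly the composition $c_\bmx(\nabla_\lambda)$ of \eqref{lower-left}, its zero divisor gives the positions of the apparent singularities, the residue parameters come from the local Schr\"{o}dinger normal form as in \eqref{induce-potential} and \eqref{induced-parameters}, and dominance via local inversion is Proposition \ref{prop-inverse-SOVdR}. The gap is in the Poisson step, which you yourself flag as the main obstacle.

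First, the analytic-continuation argument does not work as stated: if the discrepancy between the pushed-forward and target Poisson bivectors is holomorphic in $\lambda$ and vanishes at the single point $\lambda=0$, nothing follows --- a holomorphic function vanishing at one point need not vanish identically. One could try to rescue this via a weighted $\bC^\ast$-equivariance on $\MHodge$, but you do not set that up, and in any case you then concede that one must still ``check that $\SOV_{\mathrm{Hod}}$ intertwines the relative Poisson bivectors'', which is the original problem, so the reduction is circular. Second, your proposed canonical trivialisation of the affine bundle $\MdRd$ by a reference $\lambda$-connection ``diagonal with respect to an extension splitting of $L\subset E$'' does not exist in the relevant range: the extensions are non-split, $\deg L = d<0$ obstructs a $\lambda$-connection on $L$ itself for $\lambda\neq 0$, and Corollary \ref{cor-Serre-lambda} gives $\langle c_\bmx(\nabla_\lambda),\bmx\rangle=\lambda d\neq 0$, so no $\lambda$-connection preserves $L$. (Relatedly, there are no ``curvature terms'' to absorb: $\lambda$-connections on a curve are automatically flat.) What the paper actually does is a direct Darboux computation on the lift $\widetilde{\SOVdR}$ to $H^{-1}(\lambda d)\subset\tMdRd$: it expresses $\omega_0$ and $\omega_+$ through normalised third-kind differentials and prime forms, and the key Lemma \ref{lemma-crucial} shows that all $\lambda$-dependent corrections to $\nu_{\lambda,n}$ (the residues $\pm\lambda$ at $\bq$ and $\bqcheck$, and the shift $-\lambda\,\omega_+''/(4\omega_+')$) depend only on positions and hence drop out of the brackets $\{\nu_{\lambda,n},F\}$, reducing the identity to the one established in \cite{DT1}. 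That explicit cancellation is the content your proposal is missing.
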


The construction of $\SOVdR$ and the proof that it is a Poisson map are analogous 
to those of $\SOV$.
For the latter, 
the key ingredients are constructions of certain effective divisors
on smooth spectral curves of Higgs bundles $(E, \phi)$
from triples $(E, \phi, L)$, which induce points in $T^\ast \N$.
The authors called these effective divisors Baker-Akhiezer divisors in \cite{DT1}.
The analogues of these divisors are the positions and residue parameters of 
simple apparent singularities of \textit{branched projective structures}, 
or equivalently \textit{opers} with gauge group $PSL_2(\bC)$ on $X$.
The map $\SOVdR$ encodes the positions and residue parameters 
of apparent singularities of opers that are induced from triples $(E, L, \nabla_\lambda)$.
In fact, for $-2d = g-1$,
an open dense subset of $\Mop$ parametrises opers 
on $X$ with $m = 3g-3$ simple apparent singularities 
at which no quadratic differential vanishes.
This explains the notation we have used for this affine bundle. 
The parametrisation is done through the 
positions and residue parameters of the apparent singularities.

We note that the idea of parametrising opers 
using apparent singularities and residue parameters 
was used by Iwasaki 
in the case of Riemann surfaces of arbitrary genus 
with punctures (i.e. these opers have non-apparent singularities) \cite{Iwa91}.
For the case $g=0$, 
Oblezin has studied a map similar to $\SOVdR$
in the context of isomonodromic deformation \cite{Oblezin};
see also \cite{MOA22, MA23} for recent works that emphasise explicit calculations.
Theorem \ref{thm-SOV-dR-1} hence extends Oblezin's result to the case of 
Riemann surfaces of arbitrary genus without punctures.
The use of sub-line bundles $L$ of $E$ 
and the affine bundle $\MdRd$ on the moduli space $\N$ 
are very natural from this point of view.

We also note that in this paper, the Poisson structures in the main theorem \ref{thm-SOV-dR-1} 
are natural to the constructions of the domain and target spaces as affine bundles. 
One can also consider the symplectic structures defined by 
pulling-back the natural symplectic structure of the restriction of $\mM_{dR}$ 
to $\Nstable$ along $\N \dashrightarrow \Nstable$ on one hand, and  
the natural symplectic structure on the monodromy representation variety 
$\mathrm{Hom}(\pi_1, PSL_2(\bC))$ /\hspace{-1pt} $\sim$ \hspace{2pt} along the monodromy map on the other hand.
It is natural to expect that the Poisson structures in our paper 
are indeed symplectic structures and coincide with those defined by pull-back.
In the punctured Riemann surface setting, 
these identities were proved by Pinchbeck \cite{Pinch} 
and Iwasaki \cite{Iwa92} respectively. 
We also note that Pinchbeck has discussed an idea similar to Theorem \ref{thm-SOV-dR-1}.

\paragraph{Structure of the paper}
We start by, in Section 2, discussing the characterisation of $\lambda$-connections 
using sub-line bundles. 
In particular, we construct the affine bundle $\MdRd$ by explicit 
affine coordinate transformations,
and discuss how 
it is essentially a symplectic reduction of an affine bundle $\widetilde{\MdRd}$ 
modeled over $T^\ast \M$,
where $\M$ is the moduli space of embeddings $(L \hookrightarrow E)$.
This is the analogue of the fact that $T^\ast \N$ is essentially a symplectic reduction 
of $T^\ast \M$. 

In Section 3, we discuss the notion of opers with apparent singularities 
and how the transformation rules of the associated residue parameters 
define an affine bundle modeled over $T^\ast X$. 
The $m$-fold symmetric product $\Mop$ of this affine bundle parametrises 
opers with $m$ simple apparent singularities up to a choice of quadratic differential
vanishing at these singularities. This follows from the characterisation of opers 
in terms of the positions and residue parameters of apparent singularities.

Section 4 is for the construction of $\SOVdR$ and the proof of Poisson property.
We show how the projectivisation of the data $(E, L, \phi)$ defines 
an oper with apparent singularities.
We recall the construction of Baker-Akhiezer (BA) divisors from triples $(E,L,\phi)$
from \cite{DT1} to show that 
the induced apparent singularities and residue parameters are indeed 
analogues of BA divisors. 
The map $\SOVdR$ which encode these data is  hence a generalisation of 
$\SOV$. The proof of Poisson property of $\SOVdR$ 
is analogous to that of $\SOV$.

In Section 5,
we discuss the relation between wobbly bundles, 
i.e. stable bundles admitting nonzero nilpotent Higgs fields,
and the loci in $\mM_{\mathrm{op}, \lambda}^{3g-3}$
that needs to be removed to have a regular moduli space of opers.
We then discuss a Lagrangian subspace of $\mM_H$ that is mapped via $\SOV$
to a Lagrangian in $(T^\ast X)^{[3g-3]}$ defined by fixing a divisor on $X$, 
and the analogue of this in $\mM_\lambda$.
These subspaces to some extent resemble the 
Lagrangian leaves in $\mM_H$ and $\mM_\lambda$ 
associated to $\bC^\ast$-fixed point $(E,\phi) \in \mM_H^{\bC^\ast}$
for $E$ unstable \cite{S08}.

\section{Rank-2 $\lambda$-connections and sub-line bundles}
In this section, we discuss how $SL_2(\bC)$ $\lambda$-connections
$(E, \nabla_\lambda)$ and the associated moduli spaces
can be understood more explicitly if we consider them together 
with subbundles $L$ of degree $d$ of $E$. 
In particular, we construct a moduli space $\MdRd$ that 
captures important aspects of the triples $(E, L, \nabla_\lambda)$.
To this end, we start by reviewing an explicit description of the 
moduli space $\N$ of pairs $(E, L)$ and its cotangent space $T^\ast \N$,
which is the analogue of $\MdRd$ for triples $(E, L, \phi)$.

\subsection{Moduli space of pairs $(E, L)$ and its cotangent spaces}
We denote by $\M$ the moduli space of nowhere-vanishing morphisms 
$L \hookrightarrow E$ where $L$ is a line bundle of degree $d$ on $X$ and $E$ 
a rank-2 bundle of trivial determinant.
Alternatively, a point in $\M$ is an equivalence class of extensions 
of the form 
\begin{equation}\label{extension-class}
    0 \longrightarrow L \longrightarrow E \longrightarrow L^{-1} \longrightarrow 0.
\end{equation}
The forgetful map that picks out $L$ 
makes $\M$ a vector bundle over the Picard component $\Pic^d$
parametrizing line bundles of degree $d$ on $X$, with the fiber over 
$L \in \Pic^d$ the space $\mathrm{Ext}(L^{-1}, L) \simeq H^1(X, L^2)$ 
of extension classes of $L^{-1}$ by $L$.
We will abuse the notation by denoting by $(L \hookrightarrow E)$ for both the point in 
$H^1(X, L^2)$ and the point in $\M$ defined by 
an extension of the form \eqref{extension-class}.
Later when we equip coordinates $\bmx$ for $H^1(X, L^2)$, 
we will also denote these points by $\bmx$.

The projectivisation of $\M$ is the moduli space $\N$ of pairs $(E, L)$ where $L$ 
is a subbundle of $E$. It is a projective fiber bundle over 
$\Pic^d$ with fiber $\bP H^1(L^2)$ over $L$. 
The diagram
\begin{equation}
\begin{tikzcd}
    &\M \arrow[dashed]{dr}[swap]{I} 
    \arrow{rr}{\mathrm{pr}} & &\N \arrow[dashed]{dl}{i}  \\
    & &\Nstable &
\end{tikzcd}
\end{equation}
is commutative, where $\mathrm{pr}$ is the projectivisation map 
that sends $(L \hookrightarrow E)$ to the class $[L \hookrightarrow E] \equiv (E, L)$,
and $I$ and $i$ are forgetful maps that pick out only isomorphism classes of $E$.

\paragraph{Coordinates on $\M$} 
Given $(L \hookrightarrow E) \in \M$,
one can introduce local coordinates on a neighborhood of this point 
by fixing some reference divisor on $X$
and describe extension classes of line bundles explicitly. 
The general idea goes back to Tyurin \cite{Tyurin},
while the specific introduction of these coordinates used in this paper 
was discussed also in \cite{Pinch}.
We refer to our paper \cite{DT1} for a more complete discussion.

First, recall that there is an embedding of $X$ in 
$\bP H^1(L^2)$ via the linear system associated to $KL^2$.
We then choose a divisor $\mathbf{p} = \sum_{r=1}^N p_r$ on $X$, 
where $N = h^1(L^2)$,
such that spanning the embedding of $p_r$ gives us $\bP H^1(L^2)$. 
A generic divisor on $X$ would satisfy this condition. 
Next, choose a divisor $\bqcheck = \sum_{i=1}^{g- d} \check{q}_i$ 
such that the divisor $\bq = \sum_{i=1}^g q_i$ 
satisfying $L \simeq \mathcal{O}_X(\bq - \bqcheck)$ is not exceptional, and that 
$\mathbf{q} + \mathbf{p} + \mathbf{\check{q}}$ has no point with multiplicity $ > 1$. 
A divisor $\mathbf{p} + \mathbf{\check{q}}$ satisfying these conditions
will be referred to as a reference divisor 
for $(L \hookrightarrow E) \in \M$.

Let us now also choose local coordinates $\underline{z}_i$, $\underline{\check{z}}_j$ and $w_r$ 
on neighborhoods of $q_i$, $\check{q}_j$ and $p_r$ respectively.
Let $z_i = \underline{z}_i(q_i)$
and assume that $\underline{\check{z}}_i(\check{q}_i) = 0 = w_r(p_r)$.
Let $X_{\bq}$ be the complement of $\mathrm{supp}( \bq + \bp + \bqcheck)$.
Then the extension class $(L \hookrightarrow E) \in \M$ 
can be represented by transition functions 
\begin{align}\label{extension-data}
		&\begin{pmatrix} \underline{z}_i - z_i & 0 \\ 0 & \left( \underline{z}_i - z_i \right)^{-1}	\end{pmatrix},
		& &\begin{pmatrix} \underline{\check{z}}_j^{-1} & 0 \\ 0 & \underline{\check{z}}_j \end{pmatrix},
		& &\begin{pmatrix} 1 & x_r w_r^{-1} \\ 0 & 1 	\end{pmatrix},
\end{align}
of $E$ when transiting from $X_{\bq}$ to neighborhoods of points in $\mathbf{q} + \mathbf{p} + \mathbf{\check{q}}$. 
Here $x_r$ are complex numbers and hence are coordinates on $H^1(L^2)$;
we are able to represent $(L \hookrightarrow E)$ by such transition functions 
thanks to our assumption on $\bp$.
Thanks to our assumption on $\bq$ and $\bqcheck$,
varying $\bq$ then defines a neighborhood of $L$ in $\Pic^d$.
In conclusion, upon choosing local coordinates  
and reference divisor on $X$, 
we have local coordinates $\bmz = (z_1(q_1), \dots, z_g(q_g))$ 
on $\Pic^d$ and $\bmx = (x_1, \dots, x_N)$ on the fibers of $\M$ over $\Pic^d$.

In fact, there is a more natural set of local coordinates on $\Pic^d$ defined via the Abel map.
Let $(\omega_i)_{i=1}^g$ be a basis of holomorphic differentials w.r.t. 
a canonical basis of cycles on $X$.
Then 
\begin{align} \label{Abel-map}
    &\bmz \mapsto A(\bmz) = \blambda = (\lambda_1, \dots, \lambda_g),
    &\lambda_i(\mathbf{q}) \coloneqq \sum_{j=1}^g \int_{x_0}^{q_j} \omega_i 
\end{align}
defines a change of local coordinates on $\Pic^d$.
The differential $\omega_i(z_j(q_j))$ of this change of coordinates is invertible 
as $\bq$ is not exceptional by our assumption.

\paragraph{Higgs fields in terms of meromorphic differentials} 
The reference divisor $\bp + \bqcheck$ allows one to represent Higgs fields 
in terms of Abelian differentials. 
Suppose in local frames adapted to the embedding $\bmx \equiv (L \hookrightarrow E)$ 
over $X_{\bq}$
a Higgs field 
$\phi$ on $E$ takes the form
\begin{equation} \label{phi-abelian-diff}
    \phi \mid_{X_{\bq}} = \begin{pmatrix}
        \phi_0 & \phi_- \\ \phi_+ & \phi_+
    \end{pmatrix}.
\end{equation}
For a divisor $D$ on $X$, let $\Omega_{D}$ be the space of meromorphic differentials
with divisor bounded below by $-D$.
The components $\phi_0$ and $\phi_\pm$ are Abelian differentials on $X$ 
with singular properties depending on the coordinates 
$\bmx = (x_1, \dots, x_N)$ of $(L \hookrightarrow E) \in \M$.

\begin{proposition}\label{prop-abelian-diff-Higgs} \cite{DT1}
	With the setup as above, $\phi_0$, $\phi_\pm$ satisfy
	\begin{itemize}
		\item[(i)] $\phi_+$ is an element of $\Omega_{-2\bq + 2\bqcheck} \simeq H^0(X, KL^{-2})$;
		\item[(ii)] $\phi_0$ is an element of $\Omega_{\bp}$ with $\underset{p_r}{\Res}
        \hspace{2pt} \phi_0 = -x_r \phi_+(p_r)$ at each $p_r$;
		\item[(iii)] $\phi_- = (-\det(\phi) - \phi_0^2)/\phi_+$ 
        is an element of $\Omega_{2\bp + 2\bq - \bmr}$,
		with the singular parts at each $p_r$ fully determined in terms of $\bmx$, $\phi_0$ and $\phi_+$.
	\end{itemize}
\end{proposition}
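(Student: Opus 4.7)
The plan is to verify all three claims by a direct local computation from the transition data \eqref{extension-data}. The key observation is that $\phi$ is a global section of $\End(E)\otimes K$, so its matrix representative in each local frame must be regular, and the representatives in overlapping frames are related by conjugation by the transition functions. In the frame on $X_{\bq}$ adapted to $L\oplus L^{-1}$, tracelessness already forces the block form $\begin{pmatrix}\phi_0 & \phi_-\\ \phi_+ & -\phi_0\end{pmatrix}$, whose entries are a priori meromorphic differentials on $X_{\bq}$. All three claims then reduce to imposing the regularity constraints at the points of $\bq$, $\bqcheck$ and $\bp$.

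First I would treat the diagonal transitions at $\bq$ and $\bqcheck$. At each $q_i$ the transition is $\mathrm{diag}(t_i,t_i^{-1})$ with $t_i=\underline{z}_i-z_i$, and the adjoint action rescales the off-diagonal entries by $t_i^{\pm 2}$ while leaving the diagonal invariant. Regularity of the conjugated matrix forces $\phi_+$ to vanish to order $\geq 2$ at $q_i$ and $\phi_-$ to have a pole of order $\leq 2$ there, while $\phi_0$ stays regular; the transitions at $\check{q}_j$ are analogous with the roles of $\phi_+$ and $\phi_-$ interchanged. These divisor inequalities are exactly the defining conditions for $\Omega_{-2\bq+2\bqcheck}\simeq H^0(KL^{-2})$, proving (i).

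Next, at each $p_r$ I would conjugate by $m_r=\begin{pmatrix}1 & x_r w_r^{-1}\\ 0 & 1\end{pmatrix}$ and get
\begin{equation*}
m_r\,\phi\,m_r^{-1}=\begin{pmatrix}\phi_0+x_r w_r^{-1}\phi_+ & \phi_- - 2x_r w_r^{-1}\phi_0 - x_r^2 w_r^{-2}\phi_+\\ \phi_+ & -\phi_0 - x_r w_r^{-1}\phi_+\end{pmatrix}.
\end{equation*}
Regularity of the $(2,1)$ entry shows that $\phi_+$ extends regularly over $\bp$, and regularity of the $(1,1)$ entry forces $\phi_0$ to have at most a simple pole at $p_r$ whose residue exactly cancels that of $x_r w_r^{-1}\phi_+$, giving $\Res_{p_r}\phi_0=-x_r\phi_+(p_r)$. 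Combined with the diagonal step this is (ii). For (iii), tracelessness reads $\det\phi=-\phi_0^2-\phi_+\phi_-$, so $\phi_-=-(\det\phi+\phi_0^2)/\phi_+$. Since $\det\phi\in H^0(K^2)$ is holomorphic and (i)--(ii) fix the divisor of $\phi_+$ and the principal part of $\phi_0$, the $(1,2)$ entry of the conjugation above determines the principal part of $\phi_-$ at each $p_r$ explicitly in terms of $\bmx$, $\phi_0$ and $\phi_+$; the divisor bound $\phi_-\in\Omega_{2\bp+2\bq-\bmr}$ then follows from the fact that $\phi_-$ must be regular at the zeros of $\phi_+$ off $\bq$ (these being the divisor $\bmr$), which forces $\phi_+$ to divide $\det\phi+\phi_0^2$ along $\bmr$.

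I expect the only real obstacle to be the bookkeeping conventions needed to compare a meromorphic differential to the ``value'' of another one --- for instance, identifying $x_r w_r^{-1}\phi_+$ as a differential whose residue at $p_r$ equals $x_r\phi_+(p_r)$ under the chosen trivialisation by $dw_r$ --- so that the residue formula in (ii) and the principal parts in (iii) are unambiguous. Once these conventions are pinned down the entire argument is a routine linear-algebra calculation from the transition data in \eqref{extension-data}, and the proposition simply records which pieces of that calculation land in which cohomologically natural spaces.
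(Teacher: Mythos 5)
Your proof is correct and follows essentially the same route as the paper: the statement itself is deferred to \cite{DT1}, but the paper proves the $\lambda$-connection analogue (Proposition \ref{prop-abelian-differentials}) by exactly this computation --- conjugating by the transition matrices \eqref{extension-data} and imposing regularity at $\bq$, $\bqcheck$ and $\bp$ --- and your $\lambda=0$ specialisation of that calculation is accurate, including the explicit matrix at $p_r$ and the resulting residue condition. (Your reading of $\bmr$ in part (iii) as the extra zeros of $\phi_+$ differs from the paper's later convention $\bmr=\bp+\bqcheck$, but that ambiguity originates in the statement itself, and your divisor bounds at $\bp$, $\bq$, $\bqcheck$ together with the regularity of $\phi_-$ at the remaining zeros of $\phi_+$ are the substantively correct content.)
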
 

The Abelian differential $\phi_+$ can be identified with the composition
\begin{equation}\label{lower-left-Higgs}
    c_{L\hookrightarrow E}(\phi) \equiv c_\bmx(\phi): L \hookrightarrow E \overset{\phi}{\rightarrow} 
    E \otimes K \rightarrow L^{-1}
\end{equation}
and defines an element of $H^0(X, KL^{-2})$, 
which by Serre duality is dual to the fiber $H^1(X, L^2)$.
When the scaling of the embedding $L \hookrightarrow E$ is not important,
we will simply write $c(E, L, \phi)$.
The following condition follows immediately from the singular properties of $\phi_0$.
\begin{corollary} \label{cor-Serre-duality-0} \cite{DT1}
    Given a fixed extension class $(L \hookrightarrow E) \equiv \bmx \in H^1(X, L^2)$
    and a Higgs field $\phi$ on $E$, 
    $c_\bmx(\phi)$ is contained in the hyperplane 
    $\ker(\bmx) \subset H^0(X, KL^{-2})$, namely
    $$\langle c_\bmx(\phi), \bmx \rangle = 0.$$
\end{corollary}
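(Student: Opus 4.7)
The plan is to deduce the stated identity from the residue theorem applied to the meromorphic differential $\phi_0$ together with part (ii) of Proposition \ref{prop-abelian-diff-Higgs}. Indeed, since the statement of the Corollary is an assertion about a Serre-duality pairing, and the coordinates $\bmx$ on $H^1(X, L^2)$ are defined via the off-diagonal transition data at the points $p_r$ (see the matrices in \eqref{extension-data}), one expects any natural pairing on $H^1(X, L^2) \times H^0(X, KL^{-2})$ to be expressible as a sum of local residues at $\bp$.

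More concretely, I would proceed as follows. First, recall from Proposition \ref{prop-abelian-diff-Higgs} (ii) that $\phi_0 \in \Omega_{\bp}$, so that $\phi_0$ is a meromorphic Abelian differential on $X$ whose poles are contained in $\mathrm{supp}(\bp)$, with residues given explicitly by
\begin{equation*}
    \underset{p_r}{\Res}\, \phi_0 = - x_r\, \phi_+(p_r),
    \qquad r = 1, \dots, N.
\end{equation*}
By the residue theorem on the compact Riemann surface $X$, the total sum of residues of $\phi_0$ vanishes, hence
\begin{equation*}
    \sum_{r=1}^N x_r\, \phi_+(p_r) = 0.
\end{equation*}

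The second step is to identify this sum with the Serre-duality pairing $\langle c_\bmx(\phi), \bmx \rangle$. This is essentially a bookkeeping check: in the Čech description used to represent $(L \hookrightarrow E)$ by the transition matrices \eqref{extension-data}, the class $\bmx \in H^1(X, L^2)$ is represented by a cocycle whose local data at $p_r$ is $x_r w_r^{-1}$, and under the Serre-duality pairing with a global section $c_\bmx(\phi) = \phi_+ \in H^0(X, KL^{-2})$ the result is precisely $\sum_r x_r\, \phi_+(p_r)$ (possibly up to a universal nonzero scalar). Combining the two steps yields $\langle c_\bmx(\phi), \bmx \rangle = 0$.

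The main obstacle, if any, is the second step: one has to be careful that the identification between the Čech coordinates $\bmx$ and the Serre-dual functional on $H^0(X, KL^{-2})$ really takes the form of evaluation at $\bp$ with weights $x_r$. This is a standard calculation (it matches the usual Čech/Dolbeault dictionary for the residue pairing and is in line with the conventions of \cite{DT1, Tyurin, Pinch}), so once the setup is laid out carefully there is nothing deeper to do. The residue-theorem step then makes the vanishing immediate.
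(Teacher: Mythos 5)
Your proposal is correct and matches the paper's own (very terse) argument: the paper states that the corollary ``follows immediately from the singular properties of $\phi_0$,'' which is exactly your residue-theorem step applied to $\phi_0 \in \Omega_{\bp}$ with $\Res_{p_r}\phi_0 = -x_r\phi_+(p_r)$, combined with the identification $\langle c_\bmx(\phi),\bmx\rangle = \sum_r x_r\phi_+(p_r)$ that the paper itself records in \eqref{moment-map} via Proposition \ref{prop-coordinate-Higgs}. Nothing further is needed.
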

The space of all elements $c_\bmx(\phi)$ 
is the image of the map 
to $H^0(X, KL^{-2})$ in the long exact sequence
\begin{equation}\label{l.e.s.}
    0 \longrightarrow H^0(X, E^\ast LK) \longrightarrow H^0(X, \mathrm{End}_0(E) \otimes K) 
    \overset{c_\bmx}{\longrightarrow} H^0(X, K L^{-2}) \longrightarrow H^1(X, E^\ast LK) \longrightarrow \dots
\end{equation}
which is induced by the embedding of the bundle $E^\ast LK$ 
of $L$-invariant Higgs fields on $E$
to the bundle of trace-free Higgs fields 
$\mathrm{End}_0(E) \otimes K$.

\begin{proposition}\label{prop-surjection-c-Higgs}
If $E$ is stable and there exists a unique embedding $L \hookrightarrow E$ 
up to scaling, then $\mathrm{im}(c_\bmx) = \ker(\bmx)$.
In particular, if $L$ is a subbundle of maximal degree in $E$ then
$\mathrm{im}(c_\bmx) = \ker(\bmx)$.
\end{proposition}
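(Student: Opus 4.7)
The strategy is to bound $\dim \mathrm{coker}(c_\bmx) \leq 1$ via Serre duality and then combine with Corollary~\ref{cor-Serre-duality-0} to upgrade the containment $\mathrm{im}(c_\bmx) \subseteq \ker(\bmx)$ to an equality. From the long exact sequence \eqref{l.e.s.}, $\mathrm{coker}(c_\bmx)$ is identified with the kernel of the connecting map $H^1(X, E^\ast L K) \to H^1(X, \mathrm{End}_0(E) \otimes K)$; in particular it injects into $H^1(X, E^\ast L K)$, so it suffices to bound the dimension of the latter.

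I would then apply Serre duality, together with the isomorphism $E^\ast \simeq E$ coming from $\det E \simeq \mathcal{O}_X$, to obtain
\[
H^1(X, E^\ast L K) \;\cong\; H^0(X, E L^{-1})^\ast \;=\; \mathrm{Hom}(L, E)^\ast.
\]
The hypothesis of uniqueness of the embedding $L \hookrightarrow E$ up to scaling means that every nonzero morphism $L \to E$ is a scalar multiple of a fixed one, hence $h^0(E L^{-1}) = 1$ and therefore $\dim \mathrm{coker}(c_\bmx) \leq 1$. Stability of $E$ forces $d < 0$, and hence $\bmx \neq 0$: otherwise $E \simeq L \oplus L^{-1}$ and $L^{-1}$ would be a subbundle of degree $-d > 0$, contradicting stability. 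Consequently $\ker(\bmx)$ is a hyperplane in $H^0(X, K L^{-2})$, and since Corollary~\ref{cor-Serre-duality-0} gives $\mathrm{im}(c_\bmx) \subseteq \ker(\bmx)$, equality is forced by the codimension count.

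For the ``in particular'' clause, I would deduce the uniqueness of the embedding from maximality of $\deg L$. Any nonzero $s \in H^0(X, E L^{-1})$ must be nowhere-vanishing, for a zero at $p$ would produce a line subbundle $L(p) \hookrightarrow E$ of degree $d+1 > d$, contradicting maximality. Applying the same argument to every nonzero linear combination, two linearly independent sections $s_1, s_2$ would yield a fiberwise isomorphism $\mathcal{O}_X^{\oplus 2} \to E L^{-1}$, so $E \simeq L^{\oplus 2}$; this is incompatible with $\det E \simeq \mathcal{O}_X$ together with stability (which rules out $\deg L = 0$). Hence $h^0(E L^{-1}) = 1$ and the first part of the proposition applies.

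The only non-formal step is the ``in particular'' argument, where one must rule out two independent embeddings by the determinant/stability contradiction; once uniqueness is in hand the conclusion follows by the dimension count above. Everything else is standard homological algebra: Serre duality and the long exact sequence \eqref{l.e.s.} already at hand.
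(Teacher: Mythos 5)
Your proof is correct and follows essentially the same route as the paper's: both rest on the exactness of \eqref{l.e.s.} together with the Serre-duality identification of the codimension of $\mathrm{im}(c_\bmx)$ in $H^0(X,KL^{-2})$ with $h^0(X,L^{-1}E)=1$, and then conclude by comparing against the hyperplane $\ker(\bmx)$ supplied by Corollary \ref{cor-Serre-duality-0} (your observation that stability forces $\bmx\neq 0$, so that $\ker(\bmx)$ really is a hyperplane, is a point the paper leaves implicit). The only substantive difference is in the ``in particular'' clause: the paper cites Lange--Narasimhan for the uniqueness of the embedding of a maximal subbundle, whereas you reprove it directly (every nonzero section of $EL^{-1}$ is nowhere vanishing by maximality, and two independent ones would force $E\simeq L^{\oplus 2}$, contradicting $\det E\simeq\mathcal{O}_X$ and stability), which makes the argument self-contained at no real cost.
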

\begin{proof}
The proof follows from Riemann-Roch computation and exactness of the sequence 
\eqref{l.e.s.}. Indeed, 
the dimension of $\mathrm{im} (c_{\bmx})$ 
is $h^0(X, \mathrm{End}_0(E) \otimes K) - h^0(X, E^\ast LK_X)$, 
which for $E$ stable has codimension $h^0(X, L^{-1} E)$.
The second statement follows since a maximal subbundle has a unique 
embedding up to scaling \cite{Lange-Narasimhan}.
\end{proof}
In the range $1\leq -2d  = -2 \deg(L) \leq 2g-1$,
for a generic $L$ and extension $\bmx \in H^1(X, L^2)$, 
we have $h^0(X, L^{-1}E) = 1$.
Proposition \ref{prop-surjection-c-Higgs} says that the generic situation in this range
allows us to pick any element in 
the hyperplane $\ker(\bmx)$ and find a corresponding lower-left component $\phi_+$ of 
some Higgs fields on $E$.

\paragraph{Symplectic structures on $T^\ast \M$ and $T^\ast \N$}
The cotangent bundle $T^\ast \M$ carries a canonical symplectic structure. 
Let $\bmzcheck^H = (\check{z}^H_1, \dots, \check{z}^H_g)$, 
$\bkappa^H = (\kappa^H_1, \dots, \kappa^H_g)$ and 
$\bmk^H = (k^H_1, \dots, k^H_N)$ be the conjugate coordinates 
\footnote{The upper script ``$H$'' is meant to suggest that these Darboux coordinates 
on $T^\ast \M$ can be regarded as coordinates of (pull-backs of) Higgs bundles. 
We will use notations without this upper script for Darboux coordinates 
in the $\lambda$-connections setting (cf. subsection \ref{sect-affine-bundles}).
}
to $\bmz$, $\blambda$ and $\bmx$ respectively, i.e. 
the canonical symplectic form on $T^\ast \M$ takes the local form
\begin{equation}\label{symplectic-form-T*M}
    \tilde{\omega} = \sum_{i=1}^g dz_i \wedge d\check{z}^H_i 
    + \sum_{r=1}^N dx_r \wedge dk^H_r 
    = \sum_{i=1}^g d\lambda_i \wedge d\kappa^H_i 
    + \sum_{r=1}^N dx_r \wedge dk^H_r.
\end{equation}
One key step to the main results in \cite{DT1} is the relation between Darboux coordinates 
on $T^\ast \M$ and Abelian differentials representing Higgs fields.
We now recall this result. 
For a point $\bmx = (L \hookrightarrow E) \in \M$
with $E$ stable,
we will use the short-hand notations $\check{z}^H_i (\phi)$, $k^H_r(\phi)$ 
and $\kappa^H_i (\phi)$ for the respective coordinates of 
the pull-back $I^\ast(\phi) \in T^\ast_{\bmx} \M$ 
of a Higgs field $\phi$ on $E$.

\begin{proposition}\label{prop-coordinate-Higgs} \cite{DT1}
Suppose a Higgs field $\phi$ on $E$ takes the local form 
$\big(\begin{smallmatrix} \phi_0 & \phi_- \\ \phi_+ & - \phi_0 \end{smallmatrix}\big)$
on $X_{\bq}$ as in \eqref{phi-abelian-diff}. 
Then
\[
\begin{aligned}
\check{z}^H_i (\phi) &= - 2 \phi_0( q_i), \quad i=1,\dots,g,\\
k^H_r(\phi) & = \phi_+(p_r),\qquad r=1,\dots,N,
\end{aligned}\qquad
\kappa^H_i (\phi) = - 2 \sum_{j=1}^g (dA^{-1})_{ij} \phi_0(q_j).
\]
where the evaluations of Abelian differentials are w.r.t. chosen local coordinates.
\end{proposition}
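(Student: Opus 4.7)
The plan is to compute $I^\ast(\phi) \in T^\ast_{\bmx}\M$ by evaluating it on the coordinate tangent vectors $\partial_{x_r}$ and $\partial_{z_i}$ of $T_{\bmx}\M$. Since $I^\ast$ is dual to $dI: T_{\bmx}\M \to T_E \Nstable \simeq H^1(X, \End_0(E))$, the evaluation is given by the Serre-duality residue pairing
\[
\langle I^\ast(\phi),\, v\rangle \;=\; \sum_{p\in X}\Res_{p}\bigl( \mathrm{tr}(\phi \cdot \eta_{v,p}) \bigr),
\]
where $\eta_{v,p}$ is a \v{C}ech representative of $dI(v)$ localized near $p$.

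First I would read off the \v{C}ech representatives of $dI(\partial_{x_r})$ and $dI(\partial_{z_i})$ by differentiating the transition data \eqref{extension-data}. Keeping $L$ fixed, $\partial_{x_r}$ perturbs only the $(1,2)$-entry at $p_r$, producing a strictly upper-triangular cocycle with single entry $w_r^{-1}$ at $p_r$ -- lying naturally in the subbundle $H^1(X, L^2) \hookrightarrow H^1(X, \End_0(E))$. The vector $\partial_{z_i}$ perturbs the diagonal transition at $q_i$ through the change of $L$; a logarithmic-derivative computation gives a diagonal cocycle $\mathrm{diag}(-1, +1)\cdot(\underline{z}_i - z_i)^{-1}$ supported at $q_i$, the opposite signs on the two diagonal entries reflecting that $E$ sits in an extension with quotient $L^{-1}$.

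Next I would feed each cocycle into the trace-residue pairing against $\phi = \big(\begin{smallmatrix}\phi_0 & \phi_-\\ \phi_+ & -\phi_0\end{smallmatrix}\big)$. For $\partial_{x_r}$, only the lower-left entry $\phi_+$ survives in the trace, and $\Res_{p_r}(\phi_+ \cdot w_r^{-1}) = \phi_+(p_r)$, giving $k^H_r(\phi) = \phi_+(p_r)$. For $\partial_{z_i}$, only the diagonal $\phi_0$ survives, yielding a trace equal to $-2\phi_0/(\underline{z}_i - z_i)$ whose residue at $q_i$ is $-2\phi_0(q_i)$, hence $\check{z}^H_i(\phi) = -2\phi_0(q_i)$. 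The formula for $\kappa^H_i(\phi)$ then follows by the chain rule applied to the coordinate change $\blambda = A(\bmz)$: invariance of the canonical 1-form $\sum_i \check{z}^H_i\, dz_i = \sum_i \kappa^H_i\, d\lambda_i$ forces $\kappa^H_i = \sum_j (dA^{-1})_{ij} \check{z}^H_j$, and substituting the previous identity yields the stated expression.

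The hardest step will be justifying that the representative of $dI(\partial_{z_i})$ may indeed be taken to be the clean diagonal cocycle above, since a naïve variation of $z_i$ also shifts a point of the reference divisor and might a priori require re-adjusting the extension coordinates $\bmx$ to stay in the chosen gauge. I would handle this by exploiting the bundle structure $\M \to \Pic^d$ to lift $\partial_{z_i}$ horizontally with $\bmx$ held fixed, and then verifying that any auxiliary corrections supported at $\bp$ or $\bqcheck$ produce no residue against $\phi$, thanks to the regularity properties of $\phi_0$ and $\phi_\pm$ at these points established in Proposition \ref{prop-abelian-diff-Higgs}.
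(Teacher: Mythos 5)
The paper gives no proof of this proposition --- it is quoted verbatim from \cite{DT1} --- so there is no in-paper argument to compare against; your computation is the natural one and, as far as I can check against the paper's setup, it is correct. Representing $dI(\partial_{x_r})$ and $dI(\partial_{z_i})$ by the \v{C}ech cocycles $g^{-1}\partial g$ read off from the transition data \eqref{extension-data} --- the strictly upper-triangular cocycle with entry $w_r^{-1}$ at $p_r$, and $\mathrm{diag}(-1,+1)\,(\underline{z}_i - z_i)^{-1}$ at $q_i$ --- and pairing with $\phi$ via the trace--residue form of Serre duality gives exactly $\phi_+(p_r)$ and $-2\phi_0(q_i)$ (the latter because $\phi_0$ is regular at $q_i$ by Proposition \ref{prop-abelian-diff-Higgs}), and the formula for $\kappa^H_i$ is then forced by invariance of the canonical one-form under the change of base coordinates $\blambda = A(\bmz)$. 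Two small imprecisions in your final paragraph are worth flagging, though neither affects the argument. First, the point $q_i$ that moves when $z_i$ varies belongs to the divisor $\bq$ cutting out $L$, not to the reference divisor $\bp + \bqcheck$, which stays fixed throughout. Second, the appeal to ``regularity of $\phi_0$ and $\phi_\pm$ at $\bp$'' is not accurate: Proposition \ref{prop-abelian-diff-Higgs}(ii) says $\phi_0$ has simple poles at $\bp$. The correct resolution of the worry is simpler: once $\bmx$, the local coordinates $w_r$, $\underline{\check{z}}_j$, and the reference divisor are held fixed, the transition matrices at $\bp$ and $\bqcheck$ contain no $z_i$-dependence at all, so the cocycle representing $\partial_{z_i}$ is supported at $q_i$ alone and no auxiliary correction terms arise in the first place.
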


There is a canonical $\bC^\ast$-action on $\M$ which 
acts along the fibers over $\Pic^d$ and scales the extension classes.
The induced action on $T^\ast \M$ is defined by pull-back. 
In terms of Darboux coordinates,
\begin{align}\label{Cstar-action}
    &\epsilon. (\bmx, \blambda, \bmk^H, \bkappa^H) 
    = (\epsilon \bmx, \blambda, \epsilon^{-1} \bmk^H, \bkappa^H),
    &\epsilon \in \bC.
\end{align}
The corresponding moment map $H: T^\ast \M \rightarrow \bC$ 
is nothing but the Serre duality pairing 
of $(L \hookrightarrow E) \in H^1(X, L^2)$ 
with $c_\bmx(\phi) \in H^0(X, KL^{-2})$. 
In terms of Darboux coordinates,
\begin{equation}\label{moment-map}
    H\left( (\bmx, \blambda, \bmk^H, \bkappa^H) \right)
    = \bmx . \bmk^H = \sum_{r=1}^N x_r k^H_r.
\end{equation}
By Corollary \ref{cor-Serre-duality-0}, the pull-back of a Higgs field $\phi \in T^\ast_E\Nstable$ 
to $T^\ast_{(L\hookrightarrow E)} \M$
is contained in the level set $H^{-1}(0)$.
These pull-backs are $\bC^\ast$-equivariant and commute with the 
pull-backs to $T^\ast_{[L\hookrightarrow E]} \N$.
In fact, an open dense subset of $T^\ast \N$ is symplectomorphic to 
the quotient of an open dense subset of $H^{-1}(0)$ by the $\bC^\ast$-action, 
i.e. a symplectic reduction of an open dense subset of $T^\ast \M$
(cf. the Appendix B in \cite{DT1}).

\paragraph{Fibers of $\N \dashrightarrow \Nstable$}
The forgetful map that picks out only $E$ defines rational maps from $\M$ 
and $\N$ to the moduli space $\Nstable$ of rank-2 stable bundles with trivial determinant. 
Denote by $i$ the map $\N \dashrightarrow \Nstable$.
For $1 \leq -2d \leq g-1$, a dense subspace of the moduli space $\N$ is defined by
pairs $(E, L)$ with $L$ a subbundle of maximal degree of $E$. 
The bundle $E$ in this case is stable 
and lies in the stratum 
$$S_{d} \coloneq \left\{ E \in \Nstable \mid  \underset{L \subset E}{\max} \deg(L) \geq d   \right\}$$ 
of the Segre stratification on $\Nstable$ \cite{Lange-Narasimhan}. 
In particular, $\Nstable = S_d$ for $-2d = g-1$.
The following results are well-known.

\begin{proposition} \label{prop-finite-covering}
\begin{enumerate}
    \item For $-2d = g-1$, there is an open dense subset of $\Nstable = S_d$ 
    elements of which admit finitely many maximal subbundles \cite{Lange-Narasimhan}. 
    In particular, very-stable bundles, which are those that admit no nonzero nilpotent 
    Higgs fields and define an open dense subset of $\Nstable$, 
    admit exactly $2^g$ maximal subbundles \cite{Gronow}.
    \item For $1 \leq -2d \leq g-2$, there is an open dense subset of $S_d$ 
    elements of which admit exactly 1 maximal subbundle \cite{Lange-Narasimhan}.
\end{enumerate}
\end{proposition}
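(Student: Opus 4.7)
The plan is to reduce everything to a dimension count plus a Chern-class computation on the relative projectivised extension bundle. Since $\N$ is a projective bundle over $\Pic^d$ with fibers $\bP H^1(X,L^2)$, Riemann-Roch for $L$ of negative degree (so that $h^0(X,L^2)=0$ and $\chi(L^2)=2d+1-g$) gives $h^1(X,L^2)=g-1-2d$, and hence $\dim \N = g + (g-1-2d) - 1 = 2g-2-2d$, to be compared with $\dim \Nstable = 3g-3$.

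For part (2), in the range $1 \leq -2d \leq g-2$ we have $\dim \N < 3g-3$, so the image of the forgetful map $i:\N \dashrightarrow \Nstable$ is a proper closed subvariety, which by definition is $S_d$. I would then argue generic injectivity of $i$ onto $S_d$ following Lange--Narasimhan: for a generic $E \in S_d$, the maximal subbundle degree is exactly $d$, so fibers of $i$ coincide with sets of maximal subbundles; if such a generic fiber had positive dimension, the existence of a positive-dimensional family of degree-$d$ sub-line bundles in $E$ would force $E$ to sit in a family of dimension strictly less than $\dim S_d$, contradicting the dimension match $\dim S_d = \dim \N$.

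For part (1), $-2d=g-1$ yields $\dim \N = \dim \Nstable$, so $i$ is generically finite onto $S_d = \Nstable$. The fibre cardinality $2^g$ is the classical Segre/Lange--Narasimhan count, obtained by a Porteous-type argument: working over $X \times \Pic^d$ with a Poincaré line bundle, one identifies the locus parametrising degree-$d$ sub-line bundles of a fixed stable $E$ as the degeneracy locus of a morphism between vector bundles of the appropriate ranks, whose class evaluates to $2^g$ via the standard Poincaré-divisor intersection computation on $\Pic^d$. To conclude the very-stable statement, I would combine this with Proposition \ref{prop-abelian-diff-Higgs}: a nonzero nilpotent Higgs field on $E$ has kernel a line subbundle $L$ with vanishing $c_\bmx(\phi) = \phi_+$, so the locus of bundles admitting such a field (the wobbly locus) is a proper closed subvariety of $\Nstable$. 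Its complement, the very-stable locus, is therefore open dense, and on the intersection of this complement with the open dense subset over which $i$ has generic finite fibres, the count $2^g$ persists.

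The main obstacle is the explicit evaluation to the specific number $2^g$; finiteness of the generic fibre follows easily from the dimension match, but pinning down the cardinality requires the Chern-class computation on $\Pic^d \times X$ using Poincaré-divisor intersection numbers, which is the heart of the Lange--Narasimhan/Gronow result. Everything else -- the dimension count, the identification of $S_d$ as the image of $i$, and the restriction to the open dense very-stable locus -- is routine.
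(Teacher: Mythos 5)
First, note that the paper does not actually prove this proposition: it is introduced with ``the following results are well-known'' and the proof is delegated entirely to the cited references (Lange--Narasimhan for the stratification and the generic counts, Gronow for the very-stable statement). Your dimension count is correct: since $\deg L^2=2d<0$ gives $h^0(X,L^2)=0$, Riemann--Roch yields $h^1(X,L^2)=g-1-2d$ and hence $\dim\N=2g-2-2d$, which matches $\dim\Nstable=3g-3$ exactly when $-2d=g-1$; and your identification of $S_d$ as the image of $i$ and of the fibers over generic points of $S_d$ as sets of maximal subbundles is also correct. As a reconstruction of the background, the outline is reasonable and honestly flags the Porteous-type computation as the hard step.

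There are, however, two places where the sketch proves strictly less than what is asserted. (a) In part (2), the dimension argument only yields that the generic fiber of $i$ over $S_d$ is \emph{finite}; the claim is that it is a \emph{single point}. Nothing in a pure dimension match rules out a generically $k{:}1$ map with $k>1$; the uniqueness of the maximal subbundle for generic $E$ with $1\leq -2d\leq g-2$ is the actual content of the Lange--Narasimhan theorem and needs a separate argument beyond $\dim\N=\dim S_d$. (b) In part (1), the statement concerns \emph{every} very-stable bundle, whereas you only conclude that the count $2^g$ ``persists'' on an open dense subset of the very-stable locus. The degeneracy-locus class always evaluates to $2^g$, but it counts maximal subbundles with multiplicity; the point of Gronow's result is that very-stability forces all multiplicities to equal one (the multiplicity of $L$ is controlled by the nilpotent Higgs fields on $E$ with kernel $L$, i.e.\ by $h^0(X,K_XL^2)$, cf.\ the sequence \eqref{l.e.s.}), so the intersection number equals the actual cardinality for all very-stable $E$, not merely generic ones. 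Since the paper itself treats both points as citations, neither gap is fatal here, but as a self-contained proof your proposal is incomplete at exactly these two steps.
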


It follows from Proposition \ref{prop-finite-covering} that for $1 \leq -2d \leq g-1$, 
there is a dense subset $\N^{\mathrm{im}} \subset \N$ 
consisting of pairs $(E, L)$ 
where $E$ is stable and the restriction 
of the forgetful map $i: \N \dashrightarrow \Nstable$ 
to a neighborhood of $(E, L)$ is an immersion 
(and an isomorphism if $-2d = g-1$). 
We denote by $\Nstable^{\mathrm{im}}$ the image of $\N^{\mathrm{im}}$.

\begin{proposition}\label{prop-immersion}
    For $1 \leq -2d \leq g-1$, 
    the map $i$ is not an immersion at $[\bmx] \equiv (E, L)$ 
    if and only if 
    the line bundle $K_X L^2$ defines an exceptional divisor on $X$, 
    i.e. $h^0(X, K_X L^2)$ is larger than its expected value. 
\end{proposition}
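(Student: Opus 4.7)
The plan is to reduce the question to a cohomological computation via deformation theory. Since $E$ is stable (which is needed for $i$ to be defined at $[\bmx]$), the tangent space $T_E \Nstable$ is canonically $H^1(X, \End_0(E))$, and $T_{[\bmx]}\N$ is canonically $H^1(X, P)$, where $P \subset \End_0(E)$ is the subsheaf of trace-free endomorphisms of $E$ that preserve the sub-line bundle $L$. In a local frame adapted to $L \subset E$, sections of $P$ are upper-triangular, so $P$ fits in the short exact sequence
\[
0 \longrightarrow P \longrightarrow \End_0(E) \longrightarrow L^{-2} \longrightarrow 0,
\]
where the quotient to $L^{-2} \simeq \mathrm{Hom}(L, E/L)$ extracts the lower-left entry --- the sheaf-theoretic avatar of the map $c_\bmx$ from \eqref{lower-left-Higgs}. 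The differential $di$ is the map $H^1(P) \to H^1(\End_0(E))$ induced by the inclusion $P \hookrightarrow \End_0(E)$.

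I would then read the kernel of $di$ off the associated long exact sequence. Stability of $E$ forces $H^0(\End_0 E) = 0$, so
\[
0 \longrightarrow H^0(X, L^{-2}) \longrightarrow H^1(X, P) \xrightarrow{\ di\ } H^1(X, \End_0(E)) \longrightarrow H^1(X, L^{-2}) \longrightarrow 0,
\]
and hence $\ker(di) \simeq H^0(X, L^{-2})$. Thus $i$ fails to be an immersion at $[\bmx]$ precisely when $h^0(X, L^{-2}) > 0$. Finally, Serre duality gives $h^0(L^{-2}) = h^1(KL^2)$, and since $\deg(KL^2) = 2g-2 + 2d$ with $0 \leq g-1+2d \leq g-2$ in our range, the Riemann--Roch formula $h^0(KL^2) - h^1(KL^2) = g-1+2d$ identifies $g-1+2d$ as the expected generic value of $h^0(KL^2)$. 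Hence $h^0(KL^2)$ exceeds this expected value if and only if $h^1(KL^2) > 0$, if and only if $h^0(L^{-2}) > 0$, yielding the stated equivalence.

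The computation is essentially routine cohomological bookkeeping; the only point requiring care is the identification $T_{[\bmx]}\N \cong H^1(X, P)$, which relies on the standard deformation theory of pairs ``bundle with sub-line-bundle'' together with the fact that $E$ stable implies $H^0(P) = 0$, so that no infinitesimal-automorphism quotient intrudes and the dimension count $h^1(P) = 2g-2-2d = \dim \N$ holds. Once this identification is in place, the short exact sequence defined by extracting the lower-left entry does all the work.
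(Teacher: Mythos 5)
Your argument is correct, and it reaches the conclusion by a genuinely different route from the paper's. The paper works with the codifferential $i^\ast_{(E,L)}: T^\ast_E \Nstable \rightarrow T^\ast_{(E,L)}\N$ and reads off from the explicit Darboux coordinates of Proposition \ref{prop-coordinate-Higgs} that its kernel is precisely the space $H^0(X, K_X L^2)$ of nilpotent Higgs fields with kernel $L$; failure of immersion is then the statement that this kernel exceeds its expected dimension $g-1+2d = \dim(\Nstable) - \dim(\N)$. You instead work with the differential $di$ itself, identify $T_{[\bmx]}\N$ with $H^1(X,P)$ for the parabolic subsheaf $P \subset \End_0(E)$, and extract $\ker(di) \simeq H^0(X, L^{-2})$ from the long exact sequence of $0 \to P \to \End_0(E) \to L^{-2} \to 0$. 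The two computations are Serre-dual to one another: since $i^\ast$ is dual to $di$, one has $\dim\ker(i^\ast) = (\dim\Nstable - \dim\N) + \dim\ker(di)$, which matches $h^0(K_XL^2) - (g-1+2d) = h^1(K_XL^2) = h^0(L^{-2})$ by Riemann--Roch and Serre duality, so the two criteria agree on the nose. What your version buys is independence from the coordinate machinery (reference divisors, evaluations of Abelian differentials) underlying Proposition \ref{prop-coordinate-Higgs}; what it costs is the identification $T_{[\bmx]}\N \cong H^1(X,P)$, which you rightly flag as the one point needing justification. That identification is the standard deformation theory of pairs $(E, L \subset E)$ and is consistent with the paper's description of $\N$ as a $\bP H^1(X,L^2)$-bundle over $\Pic^d$: the subsequence $0 \to L^2 \to P \to \mathcal{O}_X \to 0$ reproduces the tangent sequence of that fibration, with $H^0(X,\mathcal{O}_X) \to H^1(X,L^2)$ hitting the Euler direction spanned by $\bmx$, and your checks $h^0(X,P)=0$ and $h^1(X,P) = 2g-2-2d = \dim\N$ confirm it.
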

\begin{proof}
    It follows from Proposition \ref{prop-coordinate-Higgs} that 
    the kernel of $i^\ast_{(E,L)}: T^\ast_E \Nstable \rightarrow T^\ast_{(E,L)}\N$ 
    is isomorphic to the space $H^0(X, K_X L^2)$
    of nilpotent Higgs fields on $E$ with kernel $L$.
    But the expected value of $h^0(X, K_X L^2)$ is $g-1 + 2d$, 
    which is equal to $\dim(\Nstable) - \dim(\N)$.
\end{proof}

It follows from Proposition \ref{prop-immersion} that, for $-2d = g-1$,
an open dense subset of $T^\ast \N$ is 
the moduli space of triples $(E, L, \phi)$ with $E$ stable
(modulo the loci of unstable bundles 
and the loci $(E, L) \in \N$ where $E$ has nilpotent Higgs fields 
with kernel $L$).
For $1\leq -2d < g-1$, Propositions \ref{prop-coordinate-Higgs} 
and \ref{prop-immersion} essentially say that, 
away from the loci $(E, L) \in \N$ where $E$ has 
more nilpotent Higgs fields with kernel $L$ than expected, 
$T^\ast \N$ is the moduli space of the ``lower-triangular part'' of the 
Higgs fields in local frames adapted to $L$.

\subsection{$\lambda$-connections in terms of Abelian differentials}
Let us fix $\lambda \in \bC$.
An $SL_2(\mathbb{C})$ $\lambda$-connection is a pair $(E, \nabla_\lambda)$ 
where $E$ is a holomorphic bundle of trivial determinant 
and $\nabla_\lambda$ is a holomorphic map $\nabla_\lambda: E \rightarrow E \otimes K_X$ 
such that
\begin{itemize}
    \item $\nabla_\lambda$ induces the trivial connection scaled by 
    $\lambda$ on $\mathcal{O}_X$;
    \item $\nabla_\lambda$ satisfies a twisted Leibniz rule 
    $\nabla_\lambda(fs) = \lambda \hspace{2pt} df \otimes s + f \nabla(s)$  
    for all local holomorphic functions $f$ and holomorphic sections $s$ of $E$.
\end{itemize}
Note that the case $\lambda = 0$ reduces to that of Higgs bundles. 
Note also that given a fixed holomorphic bundle $E$, the space of $\lambda$-connections 
on $E$ is an affine space modeled over the space 
$H^0(X, \mathrm{End}_0(E) \otimes K_X)$ 
of $SL_2(\bC)$-Higgs fields.

The following results concerning $\lambda$-connections 
can be regarded as the generalisation of 
results for Higgs bundles that we have reviewed to the case $\lambda \neq 0$.
Given $(L \hookrightarrow E) \equiv \bmx \in H^1(X, L^2)$ 
and a $\lambda$-connection $(E, \nabla_\lambda)$, the composition
\begin{equation}\label{lower-left}
    c_\bmx(\nabla_\lambda): L \longhookrightarrow E \overset{\nabla_\lambda}{\longrightarrow} E \otimes K_X \longrightarrow L^{-1} K_X   
\end{equation}
is $\mathcal{O}_X$-linear and defines a section of $KL^{-2}$. 
It can be regarded as the generalisation of $c_\bmx(\phi)$ 
defined in \eqref{lower-left-Higgs} for the $\lambda$-connection setting. 
When the scaling of the embedding $L \hookrightarrow E$ is not important, 
we will also write $c(E, L, \nabla_\lambda)$.
The generalisation of Proposition \ref{prop-abelian-diff-Higgs} is as follows.
Suppose $\nabla_\lambda$ takes the form
\begin{equation}\label{Abelian-differentials}
   \nabla_\lambda \mid_{X_{\bq}} = \lambda \partial + 
\left(\begin{matrix}  \omega_0 & \omega_- \\ \omega_+ & -\omega_0  \end{matrix}\right) 
\end{equation}
in some frames adapted to $L$ over $X_{\bq}$.
Then $\omega_0$ and $\omega_\pm$ are Abelian differentials 
that are holomorphic on $X_{\bq}$ and satisfy the following conditions. 

\begin{proposition} \label{prop-abelian-differentials}
With the above setup, $\omega_0$, $\omega_-$ and $\omega_+$ are Abelian differentials satisfying the 
following properties:
\begin{itemize}
    \item $\omega_+ \in \Omega_{-2 \mathbf{q} + 2\bqcheck}$ can be identified 
    with $c_\bmx(\nabla_\lambda)$. 
    \item $\omega_0 \in \Omega_{\bq + \bqcheck + \bp}$ with residues 
    \begin{equation} \label{residue-omega0}
        \underset{y}{\Res} \hspace{2pt} \omega_0 = 
        \begin{cases}
            - \lambda & \text{ for } y \in \bqcheck, \\
            \lambda & \text{ for } y \in \bq, \\
            -x_r \omega_+(p_r) & \text{ for } y \in \bp.
        \end{cases}
    \end{equation}
    \item $\omega_- \in \Omega_{2\bp + 2 \bq - 2 \bqcheck}$ with the singular part at each $p_r$ 
    fully determined by $\omega_0$, $\omega_+$ and $\lambda$.
\end{itemize}
\end{proposition}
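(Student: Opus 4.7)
The plan is to mimic the proof of Proposition~\ref{prop-abelian-diff-Higgs}, using the explicit transition functions \eqref{extension-data} for the bundle $E$ to transform the local form of $\nabla_\lambda$ and read off the singular behaviour of $\omega_0$ and $\omega_\pm$. The only new ingredient compared with the Higgs case is that, under a frame change by $g$, the matrix part $A$ of $\nabla_\lambda = \lambda\partial+A$ transforms to $gAg^{-1}+\lambda\,dg\cdot g^{-1}$, so one has to track the extra Leibniz contribution $\lambda\,dg\cdot g^{-1}$ on top of the conjugation $gAg^{-1}$ which already appears in the Higgs setting.

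For (i), the Leibniz term is invisible in the lower-left entry: the transitions at $\bq$ and $\bqcheck$ in \eqref{extension-data} are diagonal, while those at $\bp$ are strictly upper-triangular, so in all three cases $\lambda\,dg\cdot g^{-1}$ has vanishing lower-left entry. Hence $\omega_+$ transforms exactly as in the Higgs situation, as a section of $KL^{-2}$, which identifies it with $c_\bmx(\nabla_\lambda)$ and gives $\omega_+\in\Omega_{-2\bq+2\bqcheck}$. The $\mathcal{O}_X$-linearity of the composition \eqref{lower-left} defining $c_\bmx(\nabla_\lambda)$ follows from the standard fact that for a local section $s$ of $L$, the Leibniz term $\lambda\,df\otimes s$ lies in $L\otimes K$, which is annihilated by the quotient $E\otimes K\twoheadrightarrow L^{-1}K_X$.

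For (ii) and (iii), I would treat each type of point separately. At $p_r$ the transition is upper-triangular, so $\lambda\,dg\cdot g^{-1}$ is strictly upper-triangular and the analysis of $\omega_0$ reduces verbatim to the Higgs case, giving $\Res_{p_r}\omega_0=-x_r\omega_+(p_r)$; the conjugation by $\bigl(\begin{smallmatrix}1 & u_r\\ 0 & 1\end{smallmatrix}\bigr)$ with $u_r=x_r w_r^{-1}$ mixes $\omega_-$ with $\omega_0,\omega_+$, and the extra $\lambda\,du_r$ pins down the remaining piece of the principal part of $\omega_-$ at $p_r$ in terms of $\omega_0,\omega_+,x_r$ and $\lambda$. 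At each point of $\bq$ and $\bqcheck$ the transition is diagonal, so $gAg^{-1}$ preserves the diagonal and only rescales the off-diagonal entries; the new phenomenon is that $\lambda\,dg\cdot g^{-1}$ is now a diagonal $1$-form with a simple pole, which forces $\omega_0$ to acquire simple poles with residues $\pm\lambda$ exactly as in \eqref{residue-omega0}. The rescaling of $\omega_-$ by the diagonal transitions yields the asserted zero of order at least $2$ at each $\check q_j$ and pole of order at most $2$ at each $q_i$, hence $\omega_0\in\Omega_{\bq+\bqcheck+\bp}$ and $\omega_-\in\Omega_{2\bp+2\bq-2\bqcheck}$.

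The argument is essentially computational and the main obstacle is purely bookkeeping: tracking signs and normalisations carefully enough so that the $\lambda=0$ specialisation reproduces Proposition~\ref{prop-abelian-diff-Higgs} and so that the extra $\lambda$-contributions match the residues prescribed in \eqref{residue-omega0}.
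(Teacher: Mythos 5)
Your proposal is correct and follows essentially the same route as the paper: the paper's proof likewise conjugates the local form $\lambda\partial+\bigl(\begin{smallmatrix}\omega_0 & \omega_-\\ \omega_+ & -\omega_0\end{smallmatrix}\bigr)$ by the transition functions \eqref{extension-data} (displaying the resulting matrices \eqref{local-1-form} explicitly) and reads off the stated pole orders and residues from the requirement that the transformed $1$-form be regular at $\bp+\bq+\bqcheck$. Your observation that the Leibniz term $\lambda\,dg\cdot g^{-1}$ never touches the lower-left entry, and contributes the diagonal simple poles with residues $\pm\lambda$ only at $\bq$ and $\bqcheck$, is exactly what those explicit matrices encode.
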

\begin{proof}
    Using the transition functions \eqref{extension-data},
    one obtains the local form $\lambda \partial + A$ 
    of $\nabla_\lambda$ in neighborhoods of 
    points in $\bp +  \bq + \bqcheck$ where $A$ 
    is the local 1-form
    \begin{subequations}\label{local-1-form}
    \begin{align}
        &\begin{pmatrix}
            \omega_0 + \lambda\underline{\check{z}}_i^{-1} & \underline{\check{z}}_i^{-2} \omega_- \\
            \underline{\check{z}}_i^2 \omega_+ & - \omega_0 - \lambda\underline{\check{z}}_i^{-1}
        \end{pmatrix}& &\text{and} &
        &\begin{pmatrix}
            \omega_0 - \lambda(\underline{z}_j - z_j)^{-1} 
            & (\underline{z}_j - z_j)^{2} \omega_- \\
            (\underline{z}_j - z_j)^{-2} \omega_+ & - \omega_0 + \lambda(\underline{z}_j - z_j)^{-1}
        \end{pmatrix}
    \end{align}
    around $\check{q}_i$ and $w_r$ respectively, and 
    \begin{equation}
        \begin{pmatrix}
            \omega_0 - x_r \omega_+ w_r^{-1}
            & \omega_- - 2x_r \omega_0 w_r^{-1} - x_r^2 \omega_+ w_r^{-2} + \lambda x_r w_r^{-2} \\
            \omega_+ &-\omega_0 + x_r \omega_+ w_r^{-1}
        \end{pmatrix}
    \end{equation}
    \end{subequations}
    around $p_r$.
    The proof now follows from regularity of $A$ at $\bp +  \bq + \bqcheck$.
\end{proof}

The vanishing of sum of residues of $\omega_0$ immediately implies the following.
\begin{corollary}\label{cor-Serre-lambda}
    The Serre duality pairing of $c_\bmx(\nabla_\lambda)$ and 
    $\bmx \equiv (L \hookrightarrow E)$ is 
    \begin{equation}\label{Serre-constraint}
        \langle c_\bmx(\nabla_\lambda), \bmx \rangle 
        = \lambda d.
    \end{equation}
\end{corollary}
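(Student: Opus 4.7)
The proof is essentially a direct application of the residue theorem to the Abelian differential $\omega_0$, combined with the explicit description of the Serre duality pairing in terms of the Čech cocycle data \eqref{extension-data}. Here is the plan.

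First, I would unpack the Serre duality pairing $\langle -, - \rangle : H^0(X, KL^{-2}) \times H^1(X, L^2) \to \bC$ at the level of the chosen reference divisor. By Proposition \ref{prop-abelian-differentials}(i), $c_\bmx(\nabla_\lambda)$ is identified with the Abelian differential $\omega_+ \in \Omega_{-2\bq + 2\bqcheck}$. The extension class $\bmx$ is represented as a Čech cocycle by the transition functions $x_r w_r^{-1}$ across $X_\bq$ and neighborhoods of the $p_r$, as in \eqref{extension-data}. Thus the pairing evaluates as a sum of local residues,
\[
\langle c_\bmx(\nabla_\lambda), \bmx \rangle = \sum_{r=1}^N \Res_{p_r}\bigl( x_r w_r^{-1} \omega_+\bigr) = \sum_{r=1}^N x_r \omega_+(p_r),
\]
where $\omega_+(p_r)$ denotes the evaluation in the local coordinate $w_r$.

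Next, I would read off from Proposition \ref{prop-abelian-differentials}(ii) and formula \eqref{residue-omega0} that
\[
\Res_{p_r}\omega_0 = -x_r \omega_+(p_r), \qquad \Res_{q_j}\omega_0 = \lambda, \qquad \Res_{\check q_i}\omega_0 = -\lambda,
\]
and that $\omega_0$ has no other poles (since it is holomorphic on $X_\bq$ away from these three sets of points). The residue theorem applied to the meromorphic differential $\omega_0$ then yields
\[
0 = \sum_{j=1}^g \lambda - \sum_{i=1}^{g-d}\lambda - \sum_{r=1}^N x_r \omega_+(p_r) = \lambda d - \sum_{r=1}^N x_r \omega_+(p_r),
\]
using $|\bq| = g$ and $|\bqcheck| = g - d$. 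Combining with the formula in the previous step gives $\langle c_\bmx(\nabla_\lambda), \bmx \rangle = \lambda d$.

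There is no real obstacle: all the ingredients are furnished by Proposition \ref{prop-abelian-differentials}. The only mild point of care is ensuring the sign and normalisation conventions for the Serre pairing match the Čech cocycle representation of $\bmx$ — in particular that the pairing is computed with no extra factor of $2\pi i$ or sign, so that the residue-theorem identity reproduces $\lambda d$ rather than $-\lambda d$. In the case $\lambda = 0$, the identity specialises to Corollary \ref{cor-Serre-duality-0}, providing a consistency check.
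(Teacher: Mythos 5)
Your proposal is correct and follows exactly the paper's argument: the paper proves this corollary in one line by noting that the vanishing of the sum of residues of $\omega_0$ (with residues $\lambda$ at $\bq$, $-\lambda$ at $\bqcheck$, and $-x_r\omega_+(p_r)$ at $\bp$ from Proposition \ref{prop-abelian-differentials}) forces $\sum_r x_r\omega_+(p_r) = \lambda d$, which is the Serre pairing as in \eqref{moment-map}. Your explicit unpacking of the pairing as a sum of local residues of $x_r w_r^{-1}\omega_+$ is a slightly more detailed rendering of the same identification the paper makes implicitly.
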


The pairing \eqref{Serre-constraint} is the generalisation of 
the vanishing pairing in Corollary \ref{cor-Serre-duality-0} to $\lambda \neq 0$.
Note that if $E$ is strictly semi-stable with destablising subbundle $L$ 
then the pairing \eqref{Serre-constraint} also vanishes.
The following Proposition, which is the affine analogue of Proposition 
\ref{prop-surjection-c-Higgs}, follows immediately.

\begin{proposition}\label{prop-surjection-c-2}
    If $E$ is stable and $L$ has a unique embedding into $E$ up to scaling, 
    then the projection from the 
    space of $\lambda$-connections on $E$ 
    to the affine hyperplane in $H^0(X, K_X L^{-2})$
    defined by \eqref{Serre-constraint} is surjective.
    In other words, in such a situation, 
    given any element $c \in H^0(X, KL^{-2})$ satisfying 
    $\langle c, \bmx \rangle = \lambda d$, 
    then there exists a $\lambda$-connection 
    $(E, \nabla_\lambda)$ 
    such that $c = c_\bmx(\nabla_\lambda)$.
\end{proposition}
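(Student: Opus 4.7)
The plan is to reduce the affine statement to its linear/Higgs counterpart already proved in Proposition \ref{prop-surjection-c-Higgs}. The space $\mathrm{Conn}_\lambda(E)$ of $\lambda$-connections on a fixed bundle $E$ is, when non-empty, an affine space modeled on the linear space $H^0(X,\mathrm{End}_0(E)\otimes K_X)$ of traceless Higgs fields: if $\nabla_\lambda$ and $\nabla'_\lambda$ are two $\lambda$-connections on $E$, then $\nabla_\lambda - \nabla'_\lambda$ is $\mathcal{O}_X$-linear and lies in $H^0(X,\mathrm{End}_0(E)\otimes K_X)$ by the trivial determinant condition. The map $c_\bmx$ from \eqref{lower-left} is affine along these fibers, and its associated linear part is exactly the map $c_\bmx$ of \eqref{lower-left-Higgs} on Higgs fields.

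First I would establish that $\mathrm{Conn}_\lambda(E)\neq\emptyset$. For $\lambda=0$ this is trivial (take $\phi=0$). For $\lambda\neq 0$, a stable bundle $E$ with trivial determinant is indecomposable with vanishing Atiyah class, so by Atiyah's criterion it admits an ordinary holomorphic connection $\nabla$; then $\lambda\nabla$ satisfies the twisted Leibniz rule and gives a $\lambda$-connection on $E$. Fix any such $\nabla^0_\lambda \in \mathrm{Conn}_\lambda(E)$.

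Next, by Corollary \ref{cor-Serre-lambda}, $\langle c_\bmx(\nabla^0_\lambda),\bmx\rangle = \lambda d$, so $c_\bmx(\nabla^0_\lambda)$ lies in the affine hyperplane $H_{\lambda d} \coloneqq \{c \in H^0(X,K_XL^{-2}) : \langle c,\bmx\rangle = \lambda d\}$. Under the identification of $\mathrm{Conn}_\lambda(E)$ with $\nabla^0_\lambda + H^0(X,\mathrm{End}_0(E)\otimes K_X)$, the image of $c_\bmx$ is exactly
\[
c_\bmx(\nabla^0_\lambda) + \mathrm{im}\bigl(c_\bmx\big|_{\mathrm{Higgs}}\bigr).
\]
The hypotheses on $(E,L)$ are precisely those of Proposition \ref{prop-surjection-c-Higgs}, which identifies $\mathrm{im}(c_\bmx|_{\mathrm{Higgs}})$ with $\ker(\bmx)$, the linear hyperplane parallel to $H_{\lambda d}$. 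Therefore the image of $c_\bmx$ on $\mathrm{Conn}_\lambda(E)$ is the affine hyperplane $c_\bmx(\nabla^0_\lambda)+\ker(\bmx)=H_{\lambda d}$, which is the desired surjection.

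There is no substantial obstacle: the only non-formal input is the existence of a single $\lambda$-connection on $E$, and once that is secured via Atiyah's theorem, the proof is entirely a bookkeeping argument converting the linear statement of Proposition \ref{prop-surjection-c-Higgs} into its affine analogue using the parallelism of $H_{\lambda d}$ and $\ker(\bmx)$ guaranteed by Corollary \ref{cor-Serre-lambda}.
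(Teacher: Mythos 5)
Your proof is correct and matches the paper's intended argument: the paper gives no explicit proof, stating only that the proposition ``follows immediately'' as the affine analogue of Proposition \ref{prop-surjection-c-Higgs}, and your reduction via the affine structure of the space of $\lambda$-connections modeled over $H^0(X,\mathrm{End}_0(E)\otimes K_X)$, combined with Corollary \ref{cor-Serre-lambda} and the Higgs-case surjectivity, is exactly that reduction. One small point of precision: the connection supplied by Atiyah's criterion need not induce the trivial connection on $\det E\simeq\mathcal{O}_X$, so before scaling by $\lambda$ you should first subtract half its trace part (a holomorphic $1$-form tensored with the identity) to land in the $SL_2$ setting required by the paper's definition.
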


\subsection{Symplectic affine bundles on $\N$ and $\M$}
\label{sect-affine-bundles}
\paragraph{Affine bundle on $\M$}
We now define an affine bundle $\widetilde{\MdRd}$ on $\M$ modeled over $T^\ast \M$.
It will turn out that this affine bundle $\widetilde{\MdRd}$
is an analogue of the Hodge moduli space $\MHodge$, 
with the moment map $H$ defined in \eqref{moment-map} 
the analogue of the twistor parameter $\lambda$.

The idea of construction is rather straightforward. 
Recall the moment map $H: T^\ast \M \rightarrow \bC$,
$(\bmx, \blambda, \bmk, \bkappa) \mapsto \bmx.\bmk$,
of the $\bC^\ast$-action.
For a fixed $\lambda \in \bC^\ast$, 
consider the affine bundle $\mM^s_\lambda \rightarrow \Nstable$ 
which is the moduli space 
of irreducible $\lambda$-connections with stable underlying bundles.
It follows from Corollary \ref{cor-Serre-lambda} that the 
pull-back along $I: \M \dashrightarrow \Nstable$ of $\mM^s_\lambda$ 
defines an affine bundle on $\M$ with fibers identified with 
$H^{-1}(\lambda d) \cap T^\ast_{(L \hookrightarrow E)}\M$. 
By varying $\lambda$, we can define a bundle with fibers 
identified with the complement of $H^{-1}(0)$ in $T^\ast_{(L \hookrightarrow E)}\M$.
The extension of the associated affine isomorphisms to $H^{-1}(0)$ then defines 
an affine bundle modeled over $T^\ast \M$.

In the following, we are going to explain this construction by 
first introducing the generalisation of the Darboux coordinates 
$\bmk^H$, $\bmzcheck^H$ and $\bkappa^H$ on $T^\ast \M$
to the $\lambda \neq 0$ setting (cf. Proposition \ref{prop-coordinate-Higgs}).
Given $(L \hookrightarrow E) \in \M$ and an 
irreducible $\lambda$-connection $\nabla_\lambda$ on $E$, 
the data consisting of a reference divisor $\bmr = \bp + \bqcheck$ 
and local coordinates around $\bmr + \bq$ let us define the 
vectors $\bmk = (k_1, \dots, k_N) \in \bC^N$ and 
$\bmzcheck = (\check{z}_i, \dots, \check{z}_g) \in \bC^g$ where
\begin{subequations}\label{affine-tuples}
\begin{align}
    &k_r \coloneqq \omega_+(p_r), 
    &\check{z}_i \coloneqq -2 \left( \omega_0 - \frac{\lambda}{\underline{z}_i - 
    z_i} \right)(q_i).
\end{align}
These quantities are well-defined due to regularity of the 1-form \eqref{local-1-form}.
We also define $\bkappa = (\kappa_1, \dots, \kappa_g) \in \bC^g$ where
\begin{align}
    &\kappa_j \coloneqq \sum_{i=1}^g (dA^{-1})_{ji} \check{z}_i,
    &i = 1, \dots, g;
\end{align}
\end{subequations}
here $dA^{-1}$ is the differential of $A^{-1}$ evaluated at $\blambda = A(\bmz)$ (cf. \eqref{Abel-map}). 
By Corollary \ref{cor-Serre-lambda}, these vectors define a map 
$$ I_{\lambda,\bmr}: \mM_{\lambda}\mid_E \longrightarrow H^{-1}(\lambda d) \cap T^\ast_{(L \hookrightarrow E)} \M $$
where $\mM_{\lambda}\mid_E$ is the space of irreducible $\lambda$-connections 
on $E$
and $H^{-1}(\lambda d)$ is the $(\lambda d)$-level set 
of the moment map on $T^\ast \M$ (cf. \eqref{moment-map}).
The image of $I_{\lambda,\bmr}$ is an affine space modeled over 
the image of the pull-back
$$ I^\ast_E: T^\ast \N \longrightarrow H^{-1}(0) \cap T^\ast_{(L \hookrightarrow E)} \M.  $$
It follows from Proposition \ref{prop-immersion} that $I_{\lambda,\bmr}$ is surjective, 
i.e. $E \in \Nstable^{\mathrm{im}}$,
if and only if $h^0(X, KL^2)$ is equal to the expected value $g-1 + 2d$.

Suppose now $E \in \Nstable^{\mathrm{im}}$ and so $I_{\lambda,\bmr}$ is surjective.
Consider a change of reference divisors $\bmr \mapsto \bmr' = \bp' + \bqcheck'$.
In terms of $\bmr'$, let $(L' \hookrightarrow E')$ be the representative of the same point  
in $\M$ defined by $(L \hookrightarrow E)$.
This means $E'$ is defined by some transition functions 
of the form \eqref{extension-data} 
with $\bmr \rightarrow \bmr'$ and $\bq \rightarrow \bq' = q_1' + \dots + q_g'$, 
where $L' = \mathcal{O}_X(\bq' - \bqcheck') \simeq \mathcal{O}_X(\bq - \bqcheck)$ 
is a subbundle of $E'$.
Let $\nabla_\lambda'$ be the $\lambda$-connection on $E'$ corresponding to $\nabla_\lambda$ 
via an isomorphism $E' \simeq E$.
By considering a local form of $\nabla_\lambda'$ 
in local frames adapted to $L'$ over $X\setminus \mathrm{supp}(\bp' + \bq' + \bqcheck')$ 
as in \eqref{Abelian-differentials}, 
one can similarly define a map
$I_{\lambda, \bmr'}: \mM_{\lambda}\mid_{E'} \longrightarrow H^{-1}(\lambda d)$
that associates to $\nabla_\lambda'$ a set of vectors 
$\bmk'$ and $\bmzcheck'$.
For a fixed $\lambda \in \bC^\ast$,
there is a unique affine automorphism of $H^{-1}(\lambda d)$ that relate the 
two sets of tuples $(\bmk, \bmzcheck)$ and $(\bmk', \bmzcheck')$ 
for all irreducible $\lambda$-connections on $E$.
To see this, let us choose an irreducible $\lambda$-connection 
$\nabla_{\mathrm{ref}}$ on $E$, and let
$(\bmk_{\mathrm{ref}}, \bmzcheck_{\mathrm{ref}})$ 
and $(\bmk_{\mathrm{ref}}', \bmzcheck_{\mathrm{ref}}')$
be its associated vectors w.r.t. the two reference divisors.
By Proposition \ref{prop-coordinate-Higgs}, 
the pull-back to $T^\ast\M$ 
of the Higgs field $\nabla_\lambda - \nabla_{\mathrm{ref}}$ has respective 
fiber coordinates
$(\bmk - \bmk_{\mathrm{ref}}, \bmzcheck- \bmzcheck_{\mathrm{ref}})$ 
and $(\bmk' - \bmk_{\mathrm{ref}}', \bmzcheck' - \bmzcheck_{\mathrm{ref}}')$ 
which are contained in $H^{-1}(0)$.
One then obtains a unique affine isomorphism $I_{\lambda, \bmr, \bmr'}$
associated to the transition function of 
$T^\ast \M$ and maps 
$(\bmk_{\mathrm{ref}}, \bmzcheck_{\mathrm{ref}}) \longmapsto 
(\bmk_{\mathrm{ref}}', \bmzcheck_{\mathrm{ref}}')$. 
\begin{equation}\label{affine-structure-1}
	\centering
	\begin{tikzcd}
		&\mM_{\lambda}\mid_E \arrow{r}{I_{\lambda, \bmr}} \arrow{d}{\sim}
        &H^{-1}(\lambda d)  \cap T^\ast_{(L \hookrightarrow E)} \M
         \arrow{d}{I_{\lambda, \bmr, \bmr'}} \arrow[hookrightarrow]{r} 
        &T^\ast_{(L \hookrightarrow E)} \M \arrow{d}{=} \\
        &\mM_{\lambda}\mid_{E'} \arrow{r}{I_{\lambda, \bmr'}} 
        &H^{-1}(\lambda d) \cap T^\ast_{(L' \hookrightarrow E')} \M
        \arrow[hookrightarrow]{r} 
        &T^\ast_{(L' \hookrightarrow E')} \M
	\end{tikzcd}.
\end{equation}
Clearly a different choice of the reference connection 
$\nabla_{\mathrm{ref}}$ induces the same affine isomorphism.

Consider now the Hodge moduli space $\MHodge \rightarrow \bC$ 
of irreducible $SL_2(\bC)$ $\lambda$-connections on $X$. 
The fiber $\mM_0$ over $0$ can be identified with the Hitchin moduli space $\mM_H$,
and over a fixed $\lambda \in \bC^\ast$ 
the moduli space $\mM_{\lambda}$ 
of irreducible $\lambda$-connections $(E, \nabla_\lambda)$.
Varying $\lambda \in \bC^\ast$ and requiring stability of $E$ 
defines an open dense subset $\widetilde{\mM}^s_{\lambda} \subset \MHodge\mid_{\bC^\ast}$ 
which is a vector bundle 
\begin{equation}
\begin{tikzcd}
    \widetilde{\mM}^s_{\lambda} \arrow{d} \\
    \bC^\ast \times \Nstable
\end{tikzcd}
\end{equation}
whose fiber over $\bC^\ast \times \{ E\}$ 
is the space $\widetilde{\mM}^s_{\lambda} \mid_E$ 
of irreducible $\lambda$-connections 
on $E$ modulo isomorphisms for all $\lambda \in \bC^\ast$. 
For each $(L \hookrightarrow E) \in \M$ with $E$ stable, 
upon a choice of reference divisor $\bmr$ and local coordinates,
the vectors defined in \eqref{affine-tuples} define a map 
$$I_\bmr: \widetilde{\mM}^s_{\lambda}\mid_E 
\longrightarrow T^\ast_{(L\hookrightarrow E)}\M \setminus H^{-1}(0).$$ 
A map $I_{\bmr'}$ defined by a different reference divisor $\bmr'$ 
commutes with $I_\bmr$ via the collection of affine automorphisms 
$I_{\lambda, \bmr, \bmr'}$ on the level set $H^{-1}(\lambda d)$ 
as defined in \eqref{affine-structure-1}. 
These collection of affine automorphisms extend to an affine automorphism 
$I_{\bmr, \bmr'}$ of $T^\ast_{(L \hookrightarrow E)}\M$ that restricts to 
the vector space 
isomorphism defined by transition function of $T^\ast \M$ on $H^{-1}(0)$.
Let $\{ \tilde{U}_\alpha \}_{\alpha \in \mathcal{I}}$ 
be an atlas on $\M$ such that 
\begin{itemize}
    \item each $\tilde{U}_\alpha$ can be equipped coordinates 
    using some reference divisor $\bmr_\alpha$;
    \item the images of overlaps $U_\alpha \cap U_\beta$ along $I$
    are contained in $\Nstable^{\mathrm{im}}$.
\end{itemize}
We then can use the affine automorphisms $I_{\bmr_\alpha, \bmr_\beta}$ 
of $T^\ast\M$ to define an affine bundle over $\M$ 
modeled over $T^\ast\M$. 
We denote this affine bundle by $\tMdRd$. 
It follows from \eqref{affine-structure-1} that the restriction of $\tMdRd$ 
to the complement of $H^{-1}(0)$ is simply the pull-back of 
$\widetilde{\mM}^s_{\lambda}$ to $\M$.
Hence $\tMdRd$ plays the role of an analogue of the 
Hodge moduli space $\MHodge$,
with the moment map $H$ encoding the twistor coordinate $\lambda$.

\paragraph{Symplectic structure}
It follows from the local identification of $\tMdRd$ with $T^\ast\M$
that one can define local symplectic form by pulling-back the canonical 
symplectic form on $T^\ast \M$. A priori this local symplectic form on $\tMdRd$ 
is not unique, i.e. it depends on the local identifications. 
Using Proposition \ref{prop-coordinate-Higgs} and \eqref{affine-tuples}, 
we can write this canonical symplectic form as 
\begin{equation}\label{symplectic-form-dR}
    \sum_{i=1}^g dz_i \wedge d\check{z}_i 
    + \sum_{r=1}^N dx_r \wedge dk_r 
    = \sum_{i=1}^g d\lambda_i \wedge d\kappa_i 
    + \sum_{r=1}^N dx_r \wedge dk_r
\end{equation}
in a local neighborhood of $(L \hookrightarrow E)$ with $E\in \Nstable^{\mathrm{im}}$.
It is easy to see that
\begin{align}
    &\sum_{r=1}^N dx_r \wedge dk_r =
    \sum_{r=1}^N dx_r' \wedge dk_r',
    &\sum_{i=1}^g dz_i \wedge d\check{z}_i = 
    \sum_{i=1}^g dz_i' \wedge d\check{z}_i'
\end{align}
upon a change of reference divisor $\bmr \rightarrow \bmr'$. 
Hence we have a global symplectic form $\widetilde{\omega}_{\lambda}$ 
on $\tMdRd$ with local Darboux coordinates $(\bmx, \bmz, \bmk, \bmzcheck)$ 
or alternatively $(\bmx, \blambda, \bmk, \bkappa)$ 
defined through \eqref{extension-data} and \eqref{affine-tuples}
by choosing reference divisors.

We note that for Riemann surfaces with punctures, 
Pinchbeck in \cite{Pinch} considered a similar symplectic structure 
and showed that it coincides with the pull-back of the canonical 
symplectic structure on the de Rham moduli space of holomorphic connections.
We expect by adapting the strategy in \cite{Pinch},
a similar result in the current setting of compact Riemann surfaces can be proved.

\paragraph{Symplectic affine bundle on $\N$}
The restriction of $\widetilde{\mM}^s_\lambda$ to $\{\lambda\} \times \Nstable$
is the moduli space $\mM^s_\lambda$ of irreducible $\lambda$-connections 
with stable underlying bundles.
We denote by $\MdRd$ the pull-back of $\mM^s_\lambda$ along $\N \dashrightarrow \Nstable$.
Clearly $\MdRd$ is an affine bundle modeled over $T^\ast \N$, 
and one can regard this as an analogue of how $\widetilde{\mM}^s_\lambda$ 
is an affine bundle modeled over $T^\ast \Nstable$.

Recall that an open dense subset of $T^\ast\N$ 
is a symplectic reduction of 
an open dense subset of $T^\ast\M$ by $\bC^\ast$-action.
Here,
we also have an open dense subset of $\MdRd$ as a symplectic 
reduction of an open dense subset of $\tMdRd$. 
More precisely, the $\bC^\ast$-quotient of the level set $H^{-1}(\lambda d)$
is isomorphic to an open dense subset in $\MdRd$ as complex manifolds,
such that the pull-back of the local symplectic structure defined by 
local identification with $T^\ast \N$ to $H^{-1}(\lambda d)$ 
coincides with the restriction of $\widetilde{\omega}_\lambda$ to $H^{-1}(\lambda d)$.

\paragraph{Maps from $\mM^s_\lambda$} 
Denote by $\mM^s_\lambda\mid_E$ the fiber of $\mM^s_\lambda$ 
over $E \in \Nstable$: 
in other words, this is the space of isomorphism classes of irreducible $\lambda$-connections 
on $E$.
It is an affine space modeled over $H^0(X, \mathrm{End}_0(X) \otimes K_X) \simeq \bC^{3g-3}$.
It follows from the construction of $\tMdRd$ that,
given $\bmx \equiv (L \hookrightarrow E) \in \M$, the diagram
\eqref{affine-structure-1} defines a map 
$$I_\lambda\mid_{\bmx}: \mM^s_\lambda\mid_E \rightarrow \tMdRd\mid_\bmx$$ 
which upon choosing a reference divisor $\bmr$ 
coincides with $I_{\lambda, \bmr}$.
As we vary $\bmx$ along its $\bC^\ast$-orbit,
the collections of such maps are equivariant w.r.t. the $\bC^\ast$-action 
on $\tMdRd$ and hence descends to a map 
$$i_\lambda\mid_{[\bmx]}: \mM^s_\lambda\mid_E \rightarrow \MdRd\mid_{[\bmx]}$$
where $[\bmx] = (L, E) \in \N$ is the class of $\bmx$ upon modulo scaling.
These two maps are affine analogues of the point-wise pull-backs
\begin{align*}
&I^\ast\mid_\bmx: T_E^\ast \Nstable \longrightarrow T_\bmx^\ast \M,
&i^\ast\mid_{[\bmx]}: T_E^\ast \Nstable \longrightarrow T_{[\bmx]}^\ast \N.
\end{align*}

\begin{proposition} \label{prop-surjectivity-failure}
Let $d$ be in the range $0 <-2d \leq g-1$. 
Let $\nabla_\lambda$ and $\nabla_\lambda'$ be two $\lambda$-connections on $E$, 
and $\bmx \equiv (L \hookrightarrow E) \in \MdRd$.
Then 
\begin{align*}
&I_\lambda(\nabla_\lambda) = I_\lambda(\nabla_\lambda'),
&i_\lambda(\nabla_\lambda) = I_\lambda(\nabla_\lambda')
\end{align*}
if and only if 
$\nabla_\lambda - \nabla_\lambda'$ is a nilpotent Higgs fields with kernel $L$.
In particular, $i_\lambda\mid_{[\bmx]}$ is not surjective if and only if 
$i$ is not an immersion at $[\bmx]$, 
which occurs if and only if $K_x L^2$ defines an exceptional divisor on $X$.
\end{proposition}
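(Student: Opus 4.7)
The plan is to reduce the first claim to Proposition \ref{prop-immersion} via the observation that the difference of two $\lambda$-connections is a Higgs field. First I would note that $\phi := \nabla_\lambda - \nabla_\lambda'$ is a trace-free $\mathcal{O}_X$-linear map $E \to E \otimes K_X$, since for a common value of $\lambda$ the two twisted Leibniz rules cancel and both $\lambda$-connections induce the same scaling on $\det(E) = \mathcal{O}_X$. By the very construction of $\tMdRd$ in subsection \ref{sect-affine-bundles}, the map $I_\lambda\mid_\bmx : \mM^s_\lambda\mid_E \to \tMdRd\mid_\bmx$ is an affine map whose underlying linear part is the cotangent pull-back $I^\ast\mid_\bmx : T^\ast_E \Nstable \to T^\ast_\bmx \M$; an analogous statement holds for $i_\lambda\mid_{[\bmx]}$ with linear part $i^\ast\mid_{[\bmx]}$. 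Consequently the equalities $I_\lambda(\nabla_\lambda) = I_\lambda(\nabla_\lambda')$ and $i_\lambda(\nabla_\lambda) = i_\lambda(\nabla_\lambda')$ are equivalent to $\phi$ lying in $\ker(I^\ast\mid_\bmx)$ and $\ker(i^\ast\mid_{[\bmx]})$ respectively.

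Next, I would use the factorisation $I = i \circ \mathrm{pr}$, where $\mathrm{pr}: \M \to \N$ is the projectivisation map and hence a submersion along its $\bC^\ast$-orbits, to obtain $I^\ast = \mathrm{pr}^\ast \circ i^\ast$ with $\mathrm{pr}^\ast$ pointwise injective on cotangent fibres. This gives $\ker(I^\ast\mid_\bmx) = \ker(i^\ast\mid_{[\bmx]})$, and by Proposition \ref{prop-immersion} this common kernel is exactly the space $H^0(X, K_X L^2)$ of nilpotent Higgs fields on $E$ with kernel $L$. This settles the first claim.

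For the surjectivity assertion, my approach is a dimension count between affine spaces. The source $\mM^s_\lambda\mid_E$ has dimension $h^0(X, \mathrm{End}_0(E) \otimes K_X) = 3g-3$; the target $\MdRd\mid_{[\bmx]}$ has dimension $\dim \N = 2g-2-2d$; and the fibres of $i_\lambda\mid_{[\bmx]}$ are $h^0(X, K_X L^2)$-dimensional by the preceding paragraphs. Hence $i_\lambda\mid_{[\bmx]}$ is surjective if and only if $h^0(X, K_X L^2)$ attains its expected Riemann--Roch value $g-1+2d$, which is precisely the condition that $K_X L^2$ not define an exceptional divisor on $X$, and by Proposition \ref{prop-immersion} exactly the condition that $i$ be an immersion at $[\bmx]$.

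The step I expect to be the main obstacle is justifying rigorously that $I_\lambda$ has linear part $I^\ast$ (and similarly for $i_\lambda$). This is implicit in the transition data for $\tMdRd$ built out of Proposition \ref{prop-coordinate-Higgs} and the formulas \eqref{affine-tuples}, but it requires tracing through the reference-divisor-dependent local coordinates to confirm that translating a $\lambda$-connection by a Higgs field $\phi$ shifts the associated tuple $(\bmk, \bmzcheck)$ by exactly the Darboux coordinates $(\bmk^H(\phi), \bmzcheck^H(\phi))$ of $I^\ast(\phi)$, independently of the chart.
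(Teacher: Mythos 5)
Your proposal is correct and follows essentially the same route as the paper: the paper's proof is precisely the observation that, by construction of $\tMdRd$, one has $I_\lambda(\nabla_\lambda) - I_\lambda(\nabla_\lambda') = I^\ast\mid_\bmx(\nabla_\lambda - \nabla_\lambda')$, after which everything reduces to Proposition \ref{prop-immersion}. The step you flag as the main obstacle is exactly what the paper disposes of with ``by construction'' (the affine transition maps of $\tMdRd$ are built so that differences of the tuples $(\bmk,\bmzcheck)$ are the Darboux coordinates of the pulled-back Higgs field), and your added details --- the factorisation $I^\ast = \mathrm{pr}^\ast \circ i^\ast$ and the dimension count --- are consistent with the paper's implicit use of the proof of Proposition \ref{prop-immersion}.
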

\begin{proof}
By construction, we have 
$I_\lambda(\nabla_\lambda) - I_\lambda(\nabla_\lambda') 
= I^\ast\mid_\bmx(\nabla_\lambda - \nabla_\lambda')$.
The statements now follows from Proposition \ref{prop-immersion}.
\end{proof}

It follows that for $-2d = g-1$, the maps $i_\lambda\mid_{[\bmx]}$ 
are isomorphisms on the open dense subset $\N^{\mathrm{im}} \subset \N$.
Consequently, in this case, 
similarly to how the restriction of $T^\ast \N$ to $\N^{\mathrm{im}}$ 
is the moduli space of triples $(E, L, \phi)$ for $(E,L) \in \N^{\mathrm{im}}$, 
we have the restriction of $\MdRd$ to $\N^{\mathrm{im}}$ 
is the moduli space of triples $(E, L, \nabla_\lambda)$ for $(E,L) \in \N^{\mathrm{im}}$.

\subsection{Components of $\lambda$-connections revisited} 
\label{sect-abelian-diff}
We have seen from \eqref{affine-structure-1} and \eqref{symplectic-form-dR}
how the evaluations of Abelian differentials 
$ \left( \begin{smallmatrix} \omega_0 & \omega_- \\ \omega_+ & - \omega_0   
\end{smallmatrix} \right) $ 
in \eqref{Abelian-differentials}
relate to Darboux coordinates of the symplectic form 
$\widetilde{\omega}_\lambda$ on $\widetilde{\mM}^d_\lambda$.
To prepare for the proof of the main theorem in Section \ref{sect-proof},
in the following we express $\omega_0$ in terms of the 
Darboux coordinates.

Given a pair of distinct points $p_\pm$ on $X$, recall that 
the Abelian differential of the third kind $\omega_{p_+ - p_-}$ 
has simple poles at $p_\pm$ with respective residues $\pm 1$ and 
is holomorphic elsewhere. It is the unique meromorphic differential with this 
property with vanishing $A$-cycles. 
Let us now similarly define a unique Abelian differential of the third kind 
$\omega^{\bq}_{p_+ - p_-}$ by imposing a different normalisation condition. 
Namely, we require that $\omega^{\bq}_{p_+ - p_-}$ has simple poles at $p_\pm$ with respective residues $\pm 1$, is holomorphic elsewhere, and in addition 
its 0-th Laurent coefficient w.r.t. the local coordinate $z_i$ 
around each $q_i < \bq$ is $0$. 
In other words, our normalisation condition is that
\begin{subequations}\label{normalise-vanish-q}
\begin{equation}\label{normalise-vanish-q-1}
    \omega^{\bq}_{p_+ - p_-}(q_i) = 0 \text{ for } q_i \notin \{ p_\pm \}
\end{equation}
and 
\begin{equation}\label{normalise-vanish-q-2}
    \omega^{\bq}_{p_+ - p_-} =  \left( \frac{\pm 1}{z_i} + \mathcal{O}(z_i) \right) dz_i \text{ for } q_i \in \{ p_\pm \} .
\end{equation}
\end{subequations}

Note that while condition \eqref{normalise-vanish-q-1} is coordinate-independent, 
the vanishing of the $0$-th Laurent coefficient in \eqref{normalise-vanish-q-2}
is not.
Nevertheless, one can find an explicit relation between 
the two sets of normalised Abelian differentials.
Let $\omega^{0}_{p_+ - p_-}(q_i)$ be the 0-th Laurent coefficient w.r.t. $z_i$
of $\omega_{p_+ - p_-}(q_i)$, namely
\begin{equation*}
    \omega^{0}_{p_+ - p_-}(q_i) 
    = \begin{cases}
    \omega_{p_+ - p_-}(z_i(q_i)) & \text{ for } q_i \notin \{ p_\pm \} \\
    \left( \omega_{p_+ - p_-} \mp \frac{ dz_i}{z_i}  \right) (z_i(q_i)) & \text{ for } q_i \in \{ p_\pm \}.
    \end{cases}
\end{equation*}
Then it is straightforward to check that 
\begin{equation}\label{relations-Abelian-diff}
    \omega^{\bq}_{p_+ - p_-} = \omega_{p_+ - p_-} 
    - \sum_{i, n = 1}^g \left( dA^{-1}_{\bm{\lambda}} \right)_{ni} \omega^{0}_{p_+ - p_-}(q_i) \omega_n.
\end{equation}
Knowing the residues of $\omega_0$ and using the 
definition of $\check{z}_i$ and $\kappa_i$ from \eqref{affine-tuples}, 
we can write
\begin{equation}
    \omega_0 = - \sum_{r=2}^N x_r k_r \omega^{\bq}_{p_r - p_1} 
    + \lambda \sum_{i=1}^g \omega^{\bq}_{q_i - p_1}
    - \lambda \sum_{i=1}^{g-d} \omega^{\bq}_{\check{q}_i - p_1} 
    - \frac{1}{2} \sum_{i=1}^g \kappa_i \omega_i.
\end{equation}
This formula gives a concrete way to express $\omega_0$
in terms of the coordinates on $\tMdRd$.

We can also express $\omega_+$ in terms of its zeroes and poles. 
Suppose $\bm{u} = u_1 + \dots + u_m$ where $m = \deg(KL^{-2}) = 2g-2 - 2d$ 
is its zero divisor. It follows from Proposition \ref{prop-abelian-differentials} 
that
\begin{equation}\label{phi+-prime-form}
\omega_+(x) = u_0 \frac{\prod_{i=1}^g E(x, q_i(\bm{u}) )^2 \prod_{k=1}^{N + g-1} E(x, u_k)}
{\prod_{j=1}^{g-d} E(x,\check{q}_k)^2} (\sigma(x))^{2}.
\end{equation}
Here $u_0 \in \bC^\ast$ is a scaling factor,
and $E(p,q)$ is the prime form on $\tilde{X} \times \tilde{X}$,
where $\tilde{X}$ is a fundamental domain of $X$ obtained by cutting along a basis of canonical cycles. The definition of $\sigma(x)$, 
which is a multi-valued $(g/2)$-differential,
in terms of the Theta function and prime forms can be found in 
Appendix B of \cite{DT1}.


\section{Opers with apparent singularities and their moduli}
\subsection{Basic definition and properties}
A branched projective structure subordinate to the Riemann surface $X$ 
is a collection $\{(U_\alpha, w_\alpha) \}$ 
where $\{ U_\alpha \}$ is a covering of $X$ 
and $\{ w_\alpha \}$ a collection of local holomorphic maps 
from $X$ to $\bP^1$ whose values are related by Möbius transform.
The ramification points of $w_\alpha$ are called apparent singularities.
Two branched projective structures are equivalent if their union is also 
a branched projective structures.
The analytic continuation of any local map $w_\alpha$ is also called 
a developing map; it defines a monodromy representation 
$\pi_1 \rightarrow PSL_2(\bC)$.
It is clear that the equivalence class of a branched 
projective structure is determined by such a developing map.

An equivalent description of branched projective structures is 
that of a $PSL_2(\bC)$-oper with apparent singularities, 
which we from now on will call \textit{oper} for short.
An oper is a collection $\cD$ of compatible 
local Schrödinger differential operators $\cD = \{ (U_\alpha, D_\alpha) \}$ where 
\begin{align*}
    &D_\alpha = \lambda \partial_{z_\alpha}^2 + q_\alpha(z_\alpha),
    &\lambda \in \bC^\ast.
\end{align*}
Here, ``compatible'' means that solutions to $D_\alpha$ are 
sections of a line bundle $N$ of degree $1-g$ (such as $K^{-1/2}_X$), 
namely if $f_\alpha$ is a solution to $D_\alpha$ then 
$N_{\beta\alpha} f_\alpha$ is a solution to $D_\beta$
\cite{Iwa91, Iwa92}. 
The two equivalent definitions of branched projective 
structures are related 
by taking the ratio $f_{1,\alpha}/f_{2, \alpha}$ of two linearly 
independent solutions to $D_\alpha$ to be $w_\alpha$ 
and by noting that  
\begin{equation}\label{potential-Schwarzian}
    q_\alpha(z_\alpha) = \frac{\lambda^2}{2} \left\{ w_\alpha(z_\alpha), z_\alpha 
    \right\}
    = \frac{\lambda^2}{2} 
    \left\{ \frac{f_{1,\alpha}(z_\alpha)}{f_{2,\alpha}(z_\alpha)}, z_\alpha \right\} 
\end{equation}
where $\left\{ g(z), z \right\} \coloneqq \frac{g'''}{g'} - \frac{3}{2} \left( \frac{g''}{g'} \right)^2$ is the Schwarzian derivative of a function $g(z)$.
It follows from the transformation rule of a Schwarzian derivative upon a change 
of coordinates $z_\alpha \rightarrow z_\beta(z_\alpha)$
that the potentials of the local Schrödinger equations 
transform as 
\begin{align}\label{potential-transform}
	q_\beta(z_\beta) (z_\beta' )^2 
 = q_\alpha(z_\alpha) - \frac{\lambda^2}{2} \left\{z_\beta, z_\alpha \right\}.
\end{align}
We see that these potentials transform almost as quadratic differentials 
plus a correction term scaled with $\lambda^2$.
Note that this correction term vanishes if $z_\beta(z_\alpha)$ is a 
Möbius transform.
Consequently, w.r.t. a coordinate atlas where the coordinate changes 
are all Möbius transform, the potentials $q_\alpha$ indeed define a quadratic 
differential (Example \ref{ex-opers-wo-as} below gives examples of 
such atlases).

One can show by direct computation from \eqref{potential-Schwarzian} 
that the singularities of $q_\alpha(z_\alpha)$, i.e. the ramification points of 
$w_\alpha(z_\alpha)$, have particular Laurent tail expansion, 
which ensure that even though solutions to $D_\alpha$ are singular and might 
have non-trivial monodromy around these singularities, 
their ratios are holomorphic there.
Consequently, the developing monodromy in $PSL_2(\bC)$ do not ``see'' these singularities, 
hence the name ``apparent singularities''.
The order of an apparent singularity is the ramification order of $w_\alpha$.
In this paper, we will restrict ourselves to opers with \textit{simple} 
apparent singularities, 
namely they are of order $1$.
It follows from \eqref{potential-Schwarzian} and the Taylor expansion 
$w_\alpha(z_\alpha) = \sum_{k\geq 1} w_k z_\alpha^k$ 
at a simple apparent singularity that the Laurent expansion of 
$q_\alpha(z_\alpha)$ takes the form 
\begin{align}\label{as-condition}
    &\frac{1}{\lambda^2} q_\alpha(z_\alpha) = 
    - \frac{3}{4z_\alpha^2} + \frac{\nu_\alpha}{z_\alpha} + q_{\alpha,0} + \mathcal{O}(z_\alpha),
    &\nu_\alpha^2 + q_{\alpha,0} = 0.
\end{align}
We call $\nu_\alpha$ the \textit{residue parameter} of the apparent singularity 
$z_\alpha = 0$ of the oper $\cD$ w.r.t. local coordinate $z_\alpha$.

\begin{example}\label{ex-opers-wo-as}
The space of opers without apparent singularities on $X$ is an affine 
space modeled over $H^0(X, K_X^2) \simeq \bC^{3g-3}$.
Let $\{U_\alpha, z_\alpha\}$ be a collection of coordinates charts 
induced by the universal covering of $X$, 
i.e. the upper-half plane, via the uniformisation theorem.
In these coordinates, the ``uniformising'' oper takes the local 
form $D_\alpha = \lambda \partial_{z_\alpha}^2$.
If a quadratic differential has local expression of the form 
$q_\alpha(z_\alpha) dz_\alpha^2$ then the collection of differential operators
$\lambda \partial_{z_\alpha}^2 + q_\alpha(z_\alpha)$ define a meromorphic oper 
without apparent singularities. 
Note that the ratios $w_\alpha$ of linearly independent solutions 
of any oper without apparent singularity 
define coordinate atlas $\{ (U_\alpha, w_\alpha) \}$ 
with coordinates changes being Möbius transforms. 
In such an atlas, an oper with simple apparent singularity 
is equivalent to a meromorphic quadratic differential 
with poles of order $2$ satisfying \eqref{as-condition}.
\end{example}

\subsection{Moduli of opers}
\paragraph{Transformation rules of residue parameters}
Since the Schwarzian derivative of local coordinates w.r.t. each other 
is regular, it follows from \eqref{potential-transform} 
that upon a change of coordinates $z_\beta \rightarrow z_\alpha(z_\beta)$,
the transform $\nu_{\beta} \rightarrow \nu_\alpha$ 
of the residue parameters associated to a simple apparent singularity $u_n$
is the same as the transform of residue parameters of a quadratic differential 
with a double pole at $u_n$.
By a direct computation one can show that  
\begin{align}\label{residue-transform}
    \nu_\alpha = \frac{\nu_\beta}{z_\alpha'(u)}  
    + \frac{3}{4} \frac{z_\alpha''(u)}{(z_\alpha'(u))^2}
\end{align}
where we have evaluated the derivatives of $z_\alpha(z_\beta)$ 
at $z_\beta(u)$.
(The invariance of the leading Laurent series coefficient $-3/4$ 
is on the other hand characteristic of quadratic differentials with double poles.)
It will turn out later to be more convenient to use the 
$\lambda$-scaled residue parameter 
\begin{equation}\label{scaled-residue}
    \nu_{\lambda, n} \coloneqq \lambda \nu_n.
\end{equation}
In terms of these parameters, the transformation rule is that of 
fiber coordinates of $T^\ast X$ 
plus a correction term linear in $\lambda$, namely
\begin{align}\label{residue-transform-scaled}
    \nu_{\lambda, \alpha} = \frac{\nu_{\lambda, \beta}}{z_\alpha'(u)}  
    + \frac{3 \lambda}{4} \frac{z_\alpha''(u)}{(z_\alpha'(u))^2}.
\end{align}

\paragraph{Opers in terms of apparent singularities and residue 
parameters}
On the other hand, one can characterise opers with $3g-3$
apparent singularities $\bu = \sum_{n=1}^{3g-3} u_n$
in terms of the positions and residue parameters of these 
singularities, 
provided that these there is no quadratic differential vanishing at $\bu$.
The idea in the proof of the following proposition, 
which uses ``building blocks'' of quadratic differentials 
to control the residue parameters,
was first used by Iwasaki in \cite{Iwa91}.

\begin{proposition}\label{prop-characterise-opers}
\cite{D-thesis, Iwa91} 
Let $\sum_{n=1}^{3g-3} u_n$ be a reduced divisor on $X$ 
such that there is no quadratic differential vanishing at $\bu$.
Then there exists coordinate neighborhoods $(U_n, z_n)$ 
of each $u_n$, $n \in \{1, ..., 3g-3\}$,
and an injective map of sets
\begin{align*}
\left( U_1 \times ... \times U_{3g-3} \right) \times (\bC)^{3g-3} 
&\longrightarrow \{ \text{opers with $3g-3$ simple apparent singularities} \} / \sim 
\\
(\Vec{u'}, \Vec{\nu}) &\longmapsto \cD(\Vec{u'}, \Vec{\nu})
\end{align*}
such that the oper $\cD(\Vec{u'}, \Vec{\nu})$ associated to 
$(\Vec{u'}, \Vec{\nu}) = (u_1', ..., u_{3g-3}', \nu_1, ..., \nu_{3g-3})$
has a simple apparent singularity at each $u_n'$
and associated residue parameters $\nu_n$
w.r.t. the local coordinates $z_n$.
\end{proposition}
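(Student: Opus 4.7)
The plan is to reduce the construction to producing, for each input, a meromorphic quadratic differential on $X$ with prescribed principal parts, via a cohomological isomorphism that is built into the non-vanishing hypothesis.

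First, I would fix a reference oper $\cD\connref$ without apparent singularities, e.g.\ the uniformising oper of Example \ref{ex-opers-wo-as}, which exists by the uniformisation theorem. The transformation rule \eqref{potential-transform} shows that the correction $-(\lambda^2/2)\{z_\beta, z_\alpha\}$ is independent of the oper, so the difference of the local potentials of any two opers transforms as a genuine quadratic differential on $X$. Hence constructing an oper $\cD$ amounts to choosing a meromorphic quadratic differential $\eta$ on $X$ and declaring the local potentials of $\cD$ to be those of $\cD\connref$ plus $\eta$.

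Next, I would translate the apparent-singularity conditions \eqref{as-condition} into Laurent data for $\eta$. Writing $\bu' = u_1' + \cdots + u_{3g-3}'$ for the divisor of chosen positions and $\zeta_n = z_n - z_n(u_n')$ for the associated local coordinate vanishing at $u_n'$, the requirement that $\cD$ have a simple apparent singularity at $u_n'$ with residue parameter $\nu_n$ forces three Laurent coefficients of $\eta$ at $u_n'$ (those of $\zeta_n^{-2}$, $\zeta_n^{-1}$, $\zeta_n^0$) to equal specific values determined by $\lambda$, $\nu_n$, and the known value of the local potential of $\cD\connref$ near $u_n'$; note that $\zeta_n^{-2}$ contributes $-3\lambda^2/4$ independently of all data, and the constant term is pinned to $-\lambda^2 \nu_n^2$ minus the reference contribution. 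Requiring $\eta$ to be holomorphic elsewhere, the problem becomes: find $\eta \in H^0(X, K_X^2(2\bu'))$ with a prescribed image in $\bC^{9g-9}$ under the evaluation of its three leading Laurent coefficients at each $u_n'$.

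The crux is to show this evaluation map is an isomorphism. Consider the short exact sequence
\begin{equation*}
    0 \longrightarrow K_X^2(-\bu') \longrightarrow K_X^2(2\bu') \longrightarrow \mathcal{Q} \longrightarrow 0,
\end{equation*}
where $\mathcal{Q}$ is the skyscraper sheaf with $H^0(\mathcal{Q}) \simeq \bC^{9g-9}$ recording exactly these three Laurent coefficients at each $u_n'$. The hypothesis is precisely $H^0(K_X^2(-\bu)) = 0$. Since $K_X^2(-\bu)$ has degree $g-1$, Riemann--Roch gives $h^0(K_X^2(-\bu)) = h^1(K_X^2(-\bu))$, and Serre duality identifies $h^1(K_X^2(-\bu)) = h^0(K_X^{-1}(\bu))$; both therefore vanish. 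The long exact sequence then collapses to the desired isomorphism $H^0(K_X^2(2\bu)) \xrightarrow{\sim} H^0(\mathcal{Q})$. This property is open in the divisor, so after shrinking each $U_n$ it persists for all $(u_1', \dots, u_{3g-3}') \in U_1 \times \cdots \times U_{3g-3}$.

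With the isomorphism in hand, I would define $\cD(\Vec{u'}, \Vec{\nu}) := \cD\connref + \eta(\Vec{u'}, \Vec{\nu})$, where $\eta(\Vec{u'}, \Vec{\nu})$ is the unique preimage of the prescribed coefficient vector; by construction its local potentials satisfy \eqref{as-condition} at each $u_n'$. Injectivity is automatic, since both the set of apparent singularities and their residue parameters w.r.t.\ the fixed coordinates $z_n$ are intrinsic invariants of $\cD$. The main obstacle is the cohomological vanishing in the third step, which is exactly where the non-exceptionality hypothesis on $\bu$ is used; after that, the rest of the argument is essentially linear bookkeeping between opers and quadratic differentials.
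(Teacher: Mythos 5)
Your proposal is correct, and its overall strategy coincides with the paper's: fix a reference oper without apparent singularities, so that producing $\cD(\Vec{u'},\Vec{\nu})$ reduces to producing a meromorphic quadratic differential with double poles at $\bu'$ whose three leading Laurent coefficients at each $u_n'$ are prescribed by \eqref{as-condition}, and then use the hypothesis $Q_{\bu'}=0$ to get existence and uniqueness of that differential. Where you differ is in how that existence/uniqueness is established. The paper follows Iwasaki's building-block decomposition: it constructs the unique quadratic differentials $q^{(2)}_{u_n'}$ and $q^{(1)}_{u_n'}$ with normalised tails at $u_n'$ and vanishing at the other $u_k'$, and then corrects the constant terms by the unique holomorphic quadratic differential solving the nondegenerate $(3g-3)\times(3g-3)$ linear system \eqref{linear-system}. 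You instead package all $9g-9$ conditions at once into the evaluation map $H^0(K_X^2(2\bu'))\to H^0(\mathcal{Q})\simeq\bC^{9g-9}$ and prove it is an isomorphism from the short exact sequence $0\to K_X^2(-\bu')\to K_X^2(2\bu')\to\mathcal{Q}\to 0$ together with $h^0(K_X^2(-\bu'))=h^1(K_X^2(-\bu'))=0$ (Riemann--Roch plus the hypothesis). Your version is cleaner and makes the role of the $Q$-genericity hypothesis completely transparent in one cohomological stroke; the paper's finer decomposition into $q^{(2)}, q^{(1)}, q^{(0)}$ is what it reuses to isolate the residue parameters from the holomorphic ambiguity $Q_\bu$ in Proposition \ref{prop-characterise-opers-2} and in the discussion of the $Q$-special locus, where the evaluation map is no longer an isomorphism. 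The remaining ingredients — openness of the vanishing condition so that the $U_n$ can be shrunk, the sufficiency of \eqref{as-condition} for an apparent singularity, and injectivity because positions and residue parameters are invariants of the oper — are handled the same way in both arguments.
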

\begin{proof}
The idea is that due to our hypothesis on $\sum_{n=1}^{3g-3} u_n$, 
we can find local neighborhoods $U_n$ of $u_n$ such that  
points in $U_1 \times ... \times U_{3g-3}$ also define divisors 
$u_1' + \dots + u_{3g-3}'$ at which no quadratic differential vanish.
 Choose an oper without apparent singularity and equip $U_n$ 
with local coordinate $z_n$ induced by this oper 
(cf. Example \ref{ex-opers-wo-as}).
We now describe $\cD(\Vec{u'}, \Vec{\nu})$ 
by a quadratic differentials with double poles $u_n' \in U_n$ 
and Laurent expansion in $z_n$ satisfying \eqref{as-condition}.
To this end, we can use quadratic differentials with Laurent tails
$$q^{(2)}_{u_n'} = \frac{1}{(z_n - z_n(u_n'))^2} + \mathcal{O}(z_n - z_n(u_n')),
\qquad  q^{(1)}_{u_n'} = \frac{1}{z_n - z_n(u_n')} + \mathcal{O}(z_n - z_n(u_n')) $$
around $u_n'$ and vanish at $u_k'$ for $k \neq n$.
These quadratic differentials are actually unique.
The quadratic differential 
$$ -\frac{3}{4} \sum_{r=1}^{3g-3} q^{(2)}_{u_n'} 
+ \sum_{r=1}^{3g-3} \nu_r q^{(1)}_{u_n'} + q^{(0)}_{(\Vec{u'}, \Vec{\nu})} $$
where $q^{(0)}_{(\Vec{u'}, \Vec{\nu})}$ 
is the unique holomorphic quadratic differential 
that solves the nondegenerate linear system 
\begin{equation}\label{linear-system}
q^{(0)}_{(\Vec{u'}, \Vec{\nu})}(z_n)\mid_{z_n(u_n')} + \nu_n^2 = 0,
\qquad n = 1, \dots, 3g-3,    
\end{equation}
then defines $\cD_{(\Vec{u'}, \Vec{\nu})}$. 
The injectivity of the assignment 
$(\Vec{u'}, \Vec{\nu}) \mapsto \cD_{(\Vec{u'}, \Vec{\nu})}$ is obvious.
\end{proof}

\paragraph{$Q$-generic and $Q$-special divisors}
The hypothesis on $\bu$ in Proposition \ref{prop-characterise-opers} is a 
case of the notion of $Q$-generic divisors \cite{D-thesis, D}. 
We say that an effective divisor 
$\bu = \sum_{n=1}^m u_n$ on $X$ is $Q$-generic if the space 
$$ Q_\bu = \{ q \in H^0(X, K_X^2) \mid \bmu < \mathrm{div}(q) \} \cup \{ 0 \}$$
of quadratic differentials vanishing at $\bu$, with multiplicity counted, 
is of expected, i.e. minimal, dimension
$$ \max\{0, 3g-3 - \deg(\bu)\} . $$
We say that $\bu$ is $Q$-special otherwise.
For $\deg(\bu) \geq 3g-3$, this means $\bu$ is $Q$-special if and only if 
there is a quadratic differential vanishing at $\bu$. 
Note that the space of solutions to \eqref{linear-system}, if they exist, 
is an affine space modeled over $Q_\bu$.

It follows from the constructive proof in Propostion \ref{prop-characterise-opers} 
that whenever $\bu$ is reduced and $\dim(Q_\bu) > 0$,
there exist families of opers with 
simple apparent singularities at $\bu$ and the same residue parameters. 
Such families are affine spaces modeled over $Q_\bu$.
For example, such are generic cases for $\deg(\bu) < 3g-3$.
The following proposition follows by combinning this observation 
with the proof of Proposition \ref{prop-characterise-opers}.

\begin{proposition}\label{prop-characterise-opers-2}
\cite{D-thesis, Iwa91} 
For $m \leq 3g-3$,
let $\sum_{n=1}^{m} u_n$ be a reduced $Q$-generic divisor on $X$.
Then there exists local coordinates $z_n$ around $u_n$ 
and a non-canonical injective map of sets
\begin{align*}
(\bC)^{3g-3} \times Q_\bu
&\longrightarrow \{ \text{opers with $3g-3$ simple apparent singularities at }\bu \} / \sim 
\\
(\Vec{\nu}, \Delta q) &\longmapsto \cD(\Vec{\nu}, \Delta q)
\end{align*}
such that the oper $\cD(\Vec{\nu}, \Delta q)$ 
has associated residue parameters $\nu_n$ 
w.r.t. the local coordinates $z_n$.
\end{proposition}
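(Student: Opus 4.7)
The plan is to mirror the constructive argument of Proposition~\ref{prop-characterise-opers}, replacing its unique-solvability step by an affine one whose modeling vector space is exactly $Q_\bu$. As in Example~\ref{ex-opers-wo-as}, I would first fix an oper without apparent singularities on $X$; its ratio-of-solutions atlas provides coordinate neighborhoods $\{(U_n, z_n)\}$ of each $u_n$ in which all transition functions are Möbius transforms. The transformation rule \eqref{potential-transform} then degenerates to the tensorial rule for a quadratic differential, so in this atlas an oper with simple apparent singularities at $\bu$ is the same datum as a meromorphic quadratic differential with double poles at $\bu$ whose Laurent expansion at each $u_n$ obeys \eqref{as-condition}.

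Second, I would reuse the building-block quadratic differentials $q^{(2)}_{u_n}$ and $q^{(1)}_{u_n}$ from the proof of Proposition~\ref{prop-characterise-opers}: these have a single pole at $u_n$ with principal part $1/(z_n-z_n(u_n))^2$, respectively $1/(z_n-z_n(u_n))$, and vanish at the remaining $u_k$. Their existence and uniqueness rest on Riemann--Roch together with $Q$-genericity of $\bu$, which is equivalent to surjectivity of the evaluation map $\mathrm{ev}_\bu : H^0(X, K_X^2) \to \bC^m$ with kernel $Q_\bu$. I would then form the ansatz
\[
q(\vec\nu,\Delta q) \;=\; -\tfrac{3}{4}\sum_{n=1}^m q^{(2)}_{u_n} \;+\; \sum_{n=1}^m \nu_n\, q^{(1)}_{u_n} \;+\; q^{(0)}_{\vec\nu} \;+\; \Delta q,
\]
where $q^{(0)}_{\vec\nu} \in H^0(X,K_X^2)$ is obtained by fixing once and for all a linear section of $\mathrm{ev}_\bu$ and applying it to the constant-term vector forced by the relation $\nu_n^2 + q_{n,0} = 0$ at each $u_n$. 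By construction $q(\vec\nu,\Delta q)$ has the prescribed principal part $-3/(4z_n^2)$, residue $\nu_n$, and matching constant coefficient at each $u_n$, so it defines an oper $\cD(\vec\nu,\Delta q)$ with the required properties.

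Injectivity is then automatic: two parameters producing equivalent opers yield the same quadratic differential in our fixed atlas, hence identical Laurent tails at every $u_n$ (so $\vec\nu = \vec\nu'$), and then identical holomorphic remainders (so $\Delta q = \Delta q'$ once the splitting of $\mathrm{ev}_\bu$ is fixed). The only genuinely new input beyond the $m = 3g-3$ case is the passage from unique solvability to affine solvability; I expect the main obstacle to be verifying carefully that $Q$-genericity simultaneously provides existence and uniqueness of the building blocks $q^{(1,2)}_{u_n}$ and surjectivity of $\mathrm{ev}_\bu$, and bookkeeping the non-canonicity of the splitting so that $(\vec\nu,\Delta q) \mapsto \cD(\vec\nu,\Delta q)$ is a bona fide map of sets and not merely a surjective correspondence. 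After these Riemann--Roch checks the remainder of the argument follows the template of Proposition~\ref{prop-characterise-opers} verbatim.
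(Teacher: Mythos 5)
Your proposal is correct and follows essentially the same route as the paper, which obtains the statement by combining the building-block construction from the proof of Proposition \ref{prop-characterise-opers} with the observation that the solution set of the constant-term conditions \eqref{linear-system} is an affine space modeled over $Q_\bu$; your choice of a linear section of $\mathrm{ev}_\bu$ is exactly the non-canonical datum the statement alludes to. The one small slip is the claim that the building blocks $q^{(1)}_{u_n}$, $q^{(2)}_{u_n}$ are \emph{unique}: for $m<3g-3$ they are unique only modulo $Q_\bu$ (the difference of two candidates is a holomorphic quadratic differential vanishing at all of $\bu$), so they too must be fixed once and for all, but this ambiguity is harmlessly absorbed into the $\Delta q$ parameter and does not affect well-definedness or injectivity.
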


\begin{remark}
\begin{enumerate}
\item Proposition \ref{prop-characterise-opers} 
does not hold for a $Q$-special divisor 
$\bmu' = \sum_{n=1}^{3g-3} u_n'$: 
in fact, for most vectors $\Vec{\nu} \in \bC^{3g-3}$, 
there is no oper with apparent singularities at $\bmu'$
with residue parameters defined by $\Vec{\nu}$ since the 
system \eqref{linear-system} has no solution. 
On the other hand, there exist families of opers 
with apparent singularities at $\bmu'$ with the same 
residue parameters. These families are affine spaces modeled over 
the vector space $Q_{\bmu'}$.
\item It can be checked using \eqref{residue-transform} that upon a Möbius 
transform of local coordinates, the residue parameters can be put to $0$.
This means that fixing an oper without apparent singularity 
in the proof of Proposition \ref{prop-characterise-opers}
does not yet fix the choice of the oper $\cD(\Vec{u'}, \Vec{0})$ 
for each fixed $\Vec{u'}$.
Rather, $\cD(\Vec{u'}, \Vec{0})$ 
depends on the choices of the coordinates $z_n$ themselves.
\item \vspace{-8pt} In contrast with the Riemann-Hilbert correspondence between 
isomorphism classes of flat connections on vector bundles 
and conjugacy classes of monodromy representations in e.g. $SL_2(\bC)$,
in general there exist non-equivalent 
opers with apparent singularities with the same monodromy representation 
in $PSL_2(\bC)$.
\end{enumerate}
\end{remark}

\paragraph{Moduli of opers with apparent singularities}
Proposition \ref{prop-characterise-opers} 
and the transformation rule \eqref{residue-transform-scaled}
motivate us to consider the affine bundle 
$\mM'_{\mathrm{op}} \rightarrow X$ modeled over $T^\ast X$ with transition 
function defined as in \eqref{residue-transform-scaled}.
Let 
\begin{align}
    \mM^m_{\mathrm{op}} \coloneqq \left( \mM'_{\mathrm{op}} \right)^m / S_m.
\end{align}
One can regard $\mM^m_{\mathrm{op}}$ as 
an affine analogue of the symmetric product $\left( T^\ast X \right)^{[m]}$, 
which is recovered at the limit $\lambda \rightarrow 0$.

For $m=3g-3$,
it follows from Proposition \ref{prop-characterise-opers} that,
away from the diagonals 
\begin{equation}\label{diagonal}
    \{ [(u_1, \nu_1), \dots, (u_{3g-3}, \nu_{3g-3})] \mid u_n = u_k 
\text{ for some } n \neq k \} \subset \Mop     
\end{equation}
and the loci defined by $Q$-special divisors 
\begin{equation}\label{loci-wobbly}
    \{ [(u_1, \nu_1), \dots, (u_{3g-3}, \nu_{3g-3})] \mid \sum_{n=1}^{3g-3} u_n 
\text{ is $Q$-special} \} \subset \Mop, 
\end{equation}
we have $\Mop$ as the moduli space of opers with $3g-3$ 
simple apparent singularities which do not form $Q$-special divisors. 
The locus defined in \eqref{loci-wobbly} signifies the presence of 
wobbly bundles, which are stable bundles that admit nonzero nilpotent Higgs fields;
we refer to section \ref{sect-wobbly} for a more elaborate discussion.
For $2g-2 < m < 3g-3$, it follows from Proposition \ref{prop-characterise-opers-2}
that away from the loci \eqref{diagonal} and \eqref{loci-wobbly}, 
up to a choice of a quadratic differential,
$\Mop$ is the moduli space of opers with $m$ simple apparent singularities 
which do not form $Q$-special divisors.

\paragraph{Poisson structure} 
There exists a natural closed $2$-form on $\Mop$ 
defined away from the diagonals
which locally take the form  
\begin{equation}\label{local-form}
    \sum_{n=1}^m dz_n \wedge d\nu_n    
\end{equation}
in the coordinates defining $\Mop$. 
It is easy to see from \eqref{residue-transform} that the local $2$-form
\eqref{local-form} is invariant upon a change of coordinates 
and hence extends to a global closed 2-form on $\Mop$ 
away from the diagonals
(this observation was first made by Iwasaki \cite{Iwa91}). 

Note that in the non-compact Riemann surface setting, 
Iwasaki \cite{Iwa92} showed that \eqref{local-form} defines a symplectic form 
which coincides with the 
pull-back of the canonical symplectic form in the character variety 
$$ \mathrm{Hom}(\pi_1, PSL_2(\bC)) / \sim $$
along the monodromy map. 
We expect an analogous result holds in the compact Riemann surface setting.

\section{Separation of Variables as a Poisson map}
\subsection{From triples $(E, L, \nabla_\lambda)$ to opers}
\label{sect-define-SOV}
\paragraph{Two methods of inducing opers}
Given data $(L \hookrightarrow F, \nabla_\lambda)$, 
we discuss in the following two methods to induce an oper with apparent singularities.
The opers defined by these two methods are equivalent.

The first way is geometric: by projectivising the flat bundle $F^{\nabla_\lambda}$ 
defined by $(F, {\nabla_\lambda})$, one gets a flat $PSL_2(\bC)$-bundle 
$\bP(F^{\nabla_\lambda})$ with $\bP^1$-fibers. 
projectivising further the sub-line bundle of $F^{\nabla_\lambda}$ induced by $L$ yields 
a section of $\bP(F^{\nabla_\lambda})$. This section provides local maps to $\bP^1$
whose values are related by constant Möbius transform. 


The second method is analytic and gives explicit expression of the induced opers 
in terms of local Schrödinger equations w.r.t. local coordinates. 
Suppose in local frames adapted to $L$ and in local coordinate $z$ we have 
\begin{equation}\label{local-form-nabla}
    \nabla_\lambda(z) = \lambda \partial_z + 
    \begin{pmatrix}
        a(z) & b(z) \\ c(z) & -a(z)
    \end{pmatrix}.
\end{equation}
Consider the local function $g_\lambda(z) = a(z) - \frac{\lambda}{2} \frac{c'(z)}{c(z)}$ 
and the local differential operator $\lambda^2 \partial_z^2 + q(z)$ where
\begin{align}\label{induce-potential}
    q(z) &= b(z) c(z) + (g_\lambda(z))^2 + \lambda g_\lambda'(z) \nonumber \\
    &= b(z) c(z) + \left( a(z) - \frac{\lambda}{2} \frac{\partial_z c(z)}{c(z)} \right)^2 
    + \lambda \hspace{1pt} \partial_z \left( a(z) - \frac{\lambda}{2} \frac{\partial_z c(z)}{c(z)} \right).
\end{align}
It is straightforward to check that $q(z)$ is invariant upon a change of 
local frames adapted to $L$ 
w.r.t. the local $1$-forms $a(z)$, $b(z)$ and $c(z)$ are defined. 
One can also check that, upon a change of coordinates and transformations of $1$-forms
\begin{align*}
    &a(z_\alpha) \rightarrow a(z_\alpha(z_\beta)) z_\alpha'(z_\beta),
    & &b(z_\alpha) \rightarrow b(z_\alpha(z_\beta)) z_\alpha'(z_\beta),
    & &c(z_\alpha) \rightarrow c(z_\alpha(z_\beta)) z_\alpha'(z_\beta),
\end{align*}
we have $q(z)$ transform as in \eqref{potential-transform}. 
We conclude that the collection of local differential operators 
$\lambda^2 \partial_z^2 + q(z)$ defines an oper on $X$.
It is straightforward to check that the apparent singularities of this oper 
are defined by the zeroes of $c(L \hookrightarrow E, \nabla_\lambda)$ 
with mulitiplicities counted.

\begin{proposition}
Consider an irreducible $\lambda$-holomorphic connection $(E, \nabla_\lambda)$ 
together with a subbundle $L\hookrightarrow E$.
Then the oper $\cD = \{ \lambda^2 \partial_z^2 + q(z) \}$ 
with $q(z)$ defined in \eqref{induce-potential} 
is equivalent to the oper defined by projectivising the data 
$(L \hookrightarrow E, \nabla_\lambda)$. 
In particular, for $\lambda = 1$,  
the $PSL_2(\bC)$-monodromy representation of $\cD$ is the projection
of the $SL_2(\bC)$-monodromy representation of $(E, \nabla_\lambda)$
up to conjugation.
\end{proposition}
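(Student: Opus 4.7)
The plan is to match, up to constant Möbius transformation, the developing map of the oper obtained by projectivising $(L \hookrightarrow E, \nabla_\lambda)$ with the ratio of two linearly independent solutions of the local Schrödinger operator $\lambda^2 \partial_z^2 + q(z)$ defined in \eqref{induce-potential}. By \eqref{potential-Schwarzian} a branched projective structure is determined by its developing map up to constant Möbius transformation, so this identification will give the asserted equivalence of opers.

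First I would fix a local frame adapted to $L$ on a coordinate chart $(U, z)$, in which $\nabla_\lambda = \lambda \partial_z + A$ with $A$ as in \eqref{local-form-nabla}, and let $\Psi = \bigl(\begin{smallmatrix} f_1 & g_1 \\ f_2 & g_2 \end{smallmatrix}\bigr)$ be a fundamental matrix of solutions to $\lambda \Psi' + A \Psi = 0$. The sub-line bundle $L = \langle e_1 \rangle$, read off in the flat trivialisation via the first column of $\Psi^{-1}$, is the line spanned by $(g_2, -f_2)^T$; hence the developing map of the projectivised oper is the ratio $-g_2/f_2$ of the second components of two independent flat sections.

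The next step is to derive the scalar second-order ODE satisfied by $f_2$. On the open set where $c \neq 0$, the second row of the flat-section equation yields $f_1 = (a f_2 - \lambda f_2')/c$, and substituting into the first row produces such an ODE. Applying the standard substitution $f_2 = \sqrt{c}\, v$ to kill the first-derivative term brings the equation into Schrödinger normal form $\lambda^2 v'' + \tilde q(z)\, v = 0$, and a direct (but routine) expansion shows that $\tilde q(z)$ coincides with the potential \eqref{induce-potential}. This is the single computation at the heart of the argument. It follows that if $v_1, v_2$ are independent local solutions of the Schrödinger operator of $\cD$, then $\sqrt{c}\,v_1, \sqrt{c}\,v_2$ span the space of $f_2$-components of flat sections; the $\sqrt{c}$ cancels in the ratio, so the two developing maps agree on the complement of the zero locus of $c$.

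The chief subtlety to address is the extension of this identification across the zeroes of $c$, which are precisely the prospective apparent singularities. Although $f_2$ and $v$ individually may be singular there, the section of $\bP(F^{\nabla_\lambda})$ induced by the holomorphic inclusion $L \hookrightarrow E$ is manifestly holomorphic on all of $X$; this forces the ratio $v_1/v_2$ to extend holomorphically across $\{c = 0\}$ and, a posteriori, identifies the apparent singularity locus of $\cD$ with the zero divisor of $c(L \hookrightarrow E, \nabla_\lambda)$. Equivalence of the two opers as branched projective structures then yields the monodromy assertion for $\lambda = 1$ by functoriality of monodromy under the projection $SL_2(\bC) \twoheadrightarrow PSL_2(\bC)$.
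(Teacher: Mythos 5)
Your proposal is correct and is essentially the paper's own argument in different packaging: the substitution $f_2 = \sqrt{c}\, v$ that brings the second component of a flat section into Schr\"odinger normal form with potential \eqref{induce-potential} is exactly the gauge transformation $G_\lambda$ underlying \eqref{gauge-transformation-effect}, and reading the developing map off the first column of $\Psi^{-1}$ reproduces the paper's computation of the generator of $L^{\nabla_\lambda}$ in a flat frame (the paper merely streamlines this by working in the coordinate $w$ given by a ratio of solutions of $\cD$, in which the potential vanishes and the flat sections are explicit). Your observation that holomorphicity of the section of $\bP(F^{\nabla_\lambda})$ handles the zeroes of $c$ is likewise the reason the paper may restrict to neighborhoods containing no apparent singularity.
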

\begin{proof}
It suffices to prove equivalence in a neighborhood $U \subset X$ that contains 
no apparent singularity. Let $w$ be the ratio of local solutions to 
$\lambda^2 \partial_z^2 + q(z)$. Refining $U$ if necessary, we can use $w$ 
as a local coordinate on $U$ w.r.t. which the oper can be represented by the 
differential operator $\partial_w^2$. On the other hand, upon choosing a square-root 
$\sqrt{c(w)}$, observe that the effect of the holomorphic gauge transformation 
$G_\lambda = \left( \begin{smallmatrix} c(w)^{-1/2} & 0 \\ 0 & c(w)^{1/2} \end{smallmatrix} \right)
\left( \begin{smallmatrix} 1 & g_\lambda(w) \\ 0 & 1 \end{smallmatrix} \right)$ 
is that
\begin{align}\label{gauge-transformation-effect}
   \nabla_\lambda(w) = \partial_w + 
\begin{pmatrix} a(w) & b(w) \\ c(w) & -a(w) \end{pmatrix} 
\longmapsto 
\partial_w + \begin{pmatrix} 0 & -q(w) \\ 1 & 0 \end{pmatrix}
\end{align}
where $q(w)$ is defined by the same formula as in \eqref{induce-potential}.
This implies that $q(w) = 0$. 
Then $\left( \begin{smallmatrix} 0 \\ 1 \end{smallmatrix} \right)$
and $\left( \begin{smallmatrix} -1 \\ w \end{smallmatrix} \right)$
are local parallel sections. In the local frame defined by these two sections, 
a generator of $L^{\nabla_\lambda}$ takes the form
\begin{equation*}
    \begin{pmatrix} 0 & -1 \\ 1 & w \end{pmatrix}^{-1} 
    G^{-1} \begin{pmatrix} 1 \\ 0 \end{pmatrix}
    = c(w)^{1/2} \begin{pmatrix} -w \\ 1 \end{pmatrix}.
\end{equation*}
The local function defined by $\bP(L^{\nabla_\lambda})$ is hence $-w$.  
\end{proof}

\begin{remark}
\begin{enumerate}
    \item It is well-known that gauge transformations of the form 
$G_\lambda$ put holomorphic connections to the canonical form 
\eqref{gauge-transformation-effect}
(In the $g = 0$ setting this defines a map preserving the canonical 
symplectic structures related to the Schlesinger and Garnier systems \cite{DM}).
In fact it is this gauge transformation 
that motivates our definition of induced opers in \eqref{induce-potential}.
Note that while we might need to pick a system of branch cuts to define 
the gauge transformation around the zeroes of $c(E, L, \nabla_\lambda)$,
our definition of oper in \eqref{induce-potential}, 
does not depend on such choices.
    \item Twisting the data $(L \hookrightarrow E, \nabla_\lambda)$
    by a square-root of $\mathcal{O}_X$ induce opers up to equivalence.
\end{enumerate}
\end{remark}

\paragraph{Induced residue parameters}
At a simple zero $u_n$ of $c(E, L, \nabla_\lambda)$ 
the potential $q(z)$ has the Laurent expansion
\begin{equation}
    \frac{-3 \lambda^2}{4} \frac{1}{(z-z(u_n))^2} 
    + \lambda \left[ a(u_n) - \lambda \frac{c''(u_n)}{4c'(u_n)} \right] \frac{1}{z - z(u_n)}
    - \left[ a(u_n) - \lambda \frac{c''(u_n)}{4c'(u_n)} \right]^2 + \mathcal{O}(z - z(u_n)).
\end{equation}
In the conventions of \eqref{as-condition} and \eqref{scaled-residue}, 
we have
\begin{align}\label{induced-parameters}
   \nu_{\lambda, n} \coloneqq \lambda \nu_n  
    = a(u_n) - \lambda \frac{c''(u_n)}{4c'(u_n)}.
\end{align}
Note that a scaling of the embedding $L \hookrightarrow E$ 
does not change the residue parameter. 
It follows from Proposition \ref{prop-characterise-opers} that 
from the data of a generic point 
$\bmx = (L \hookrightarrow E) \in \M$ together with a $\lambda$-connection $\nabla_\lambda$ 
on $E$ one can induce a unique oper 
which is invariant along the $\bC^\ast$-orbit.

\paragraph{Inverse construction} The construction of opers from triples 
$(E, L, \nabla_\lambda)$ can also be inverted. 

\begin{proposition}\label{prop-inverse-SOVdR}
For an even positive integer $m$, 
let $\cD = 
\{ D_\alpha = \lambda^2 \partial_{z_\alpha}^2 + q_\alpha(z_\alpha) \}$ 
be an oper with $m$ simple apparent singularities. 
Then there exists a triple $(E, L, \nabla_\lambda)$ that induces 
an oper equivalent to $\cD$ by the operation described above. 
Furthermore, for $\lambda = 1$,
if $(E, \nabla_\lambda)$ is irreducible then 
such a triple is unique up to tensoring with a square-root of $\mathcal{O}_X$.
\end{proposition}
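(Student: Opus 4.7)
The plan is to invert the local formula \eqref{induce-potential} by first producing a triple with a \emph{meromorphic} $\lambda$-connection in companion form on a fixed rank-2 bundle, and then making it holomorphic via a Hecke modification at each simple apparent singularity.

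To start, I would fix a theta characteristic $N$ on $X$ so that $\deg N = 1-g$ matches the degree of the line bundle of solutions of $\cD$, and realize $\cD$ as a meromorphic $\lambda$-connection on $V = N^{-1} \oplus N$ in companion form
$$ \nabla_V = \lambda \partial + \begin{pmatrix} 0 & -\tilde q \\ 1 & 0 \end{pmatrix}, $$
where $\tilde q$ is a meromorphic section of $K_X^{2}$ whose Laurent expansions recover the local potentials $q_\alpha$; it has double poles at $\bu = u_1 + \dots + u_m$ with leading behavior prescribed by \eqref{as-condition}. The sub-line bundle $N^{-1} \subset V$ then satisfies $c_{N^{-1}}(\nabla_V) \equiv 1$, and apparentness is built into the Laurent tails of $\tilde q$.

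Next, since $m$ is even, $\mathcal{O}(\bu)$ admits a square-root $M_\bu$. I would perform a Hecke modification of $V$ at $\bu$ that twists the sub-line bundle $N^{-1}$ by $M_\bu^{-1}$ and the quotient $N$ by $M_\bu$, yielding a rank-2 bundle $E$ of trivial determinant together with a sub-line bundle $L \simeq N^{-1} \otimes M_\bu^{-1}$ of degree $d = g-1-m/2$. A direct local computation, essentially inverting the gauge transformation $G_\lambda$ from the proof of Proposition 4.1, should confirm that $\nabla_V$ descends to a holomorphic $\lambda$-connection $\nabla_\lambda$ on $E$ with $c_L(\nabla_\lambda)$ vanishing to first order precisely along $\bu$. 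Applying \eqref{induce-potential} to $(E, L, \nabla_\lambda)$ then returns $\cD$ by construction, since the two gauge transformations are mutually inverse.

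For the uniqueness statement at $\lambda = 1$, my plan is to argue projectively. Given two triples $(E, L, \nabla)$ and $(E', L', \nabla')$ both inducing $\cD$, projectivisation yields two flat $PSL_2(\bC)$-bundles equipped with sections (from $L$, resp.\ $L'$), each recovering the branched projective structure $\cD$; hence they are isomorphic as flat $\bP^1$-bundles with section. Irreducibility of $(E, \nabla)$ ensures that this projective isomorphism is unique up to the center, and any lift to $SL_2(\bC)$ gives an isomorphism $E' \simeq E \otimes M$ with $M^{2} \simeq \mathcal{O}_X$, under which $L' \simeq L \otimes M$ and $\nabla' = \nabla \otimes d_M$, with $d_M$ the unique flat connection on $M$.

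The hardest part will be the Hecke-modification step: verifying explicitly that after twisting at $\bu$ the meromorphic $\nabla_V$ becomes a holomorphic $\lambda$-connection on $E$ and that the resulting sub-line bundle has the predicted degree $d$. The hypothesis that $m$ be even enters precisely through the need for a square-root of $\mathcal{O}(\bu)$; the residual ambiguity in choosing this square-root lies in $\mathrm{Pic}[2](X)$, which matches the ``up to square-root of $\mathcal{O}_X$'' uniqueness statement exactly.
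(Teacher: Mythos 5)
Your approach---realising $\cD$ as an explicit meromorphic oper-form $\lambda$-connection and then performing Hecke modifications at the $m$ apparent singularities (using a square root $M_\bu$ of $\mathcal{O}_X(\bu)$, which is where the evenness of $m$ enters), with uniqueness for $\lambda=1$ obtained from the Riemann--Hilbert correspondence and the fact that the $2^{2g}$ lifts of the $PSL_2(\bC)$ monodromy to $SL_2(\bC)$ differ by $2$-torsion line bundles---is essentially the same as the paper's proof, which is itself only a sketch deferring to Proposition 5.6 of \cite{D-thesis}. The one point you leave implicit, and which the paper's sketch makes explicit, is that the Hecke modification is not determined by the line bundles $M_\bu^{\pm 1}$ alone: at each $u_n$ there is a $\bP^1$ of elementary modifications, and the flag must be chosen so that $a(u_n)$ matches the residue parameter via \eqref{induced-parameters}, after which holomorphy of $b=(q-g_\lambda^2-\lambda g_\lambda')/c$ at $u_n$ follows precisely from the apparentness condition \eqref{as-condition}.
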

\begin{proof} (Sketch)
See Proposition 5.6 in \cite{D-thesis} for a detailed constructive proof 
for the case $\lambda = 1$.
The case for $\lambda \in \bC^\ast$ can be generalised in a straight-forward manner.
The idea is to apply Hecke transformation of bundles to a split bundle 
at the apparent singularities of $\cD$. The choices of the flags at which we do 
Hecke transformation can be chosen such that the residue parameters 
and the components $a(z)$, $c(z)$ 
of $\nabla_\lambda$ satisfy \eqref{induced-parameters}.
This defines the bundle $E$ and the embedding of $L$ into $E$.
The component $b(z)$ of $\nabla_\lambda$ can be defined by inverting 
\eqref{induce-potential}.
The uniqueness statement follows from the Riemann-Hilbert correspondence 
for $G = SL_2(\bC)$ and the fact that the 
monodromy representation of $\cD$ has $2^{2g}$ lifts to $SL_2(\bC)$.
\end{proof}

We note that the triple $(E, L, \nabla_\lambda)$ constructed from 
$\cD$ is such that $\nabla_\lambda$ is not $L$-invariant, but 
a priori is not necessarily irreducible. However, for a fixed reduced divisor 
$\bu = \sum_{n=1}^m u_n$ with $m \leq 3g-3$, then a generic 
\footnote{Recall Proposition \ref{prop-characterise-opers-2}: 
opers with apparent singularities at $\bu$ can be classified 
by their residue parameters and holomorphic quadratic differentials vanishing 
at $\bu$. Genericity then can be defined w.r.t. this classification.}
choice of opers with apparent singularities 
at $\bu$ will be induced by $(E, L, \nabla_\lambda)$ with irreducible $(E, \nabla_\lambda)$.

\subsection{Construction of the Separation of Variables maps}
We now briefly review construction of Baker-Akhiezer divisors 
from triples $(E, L, \phi)$ and the map $\SOV$ 
for Higgs bundles from \cite{DT1}.
The purpose is to show that the apparent singularities and residue parameters 
of opers constructed from triples 
$(E, L, \nabla_\lambda)$ are analogues of Baker-Akhiezer divisors.
We then construct $\SOVdR$, the analogue 
of $\SOV$ for holomorphic connections.

\paragraph{Baker-Akhiezer divisors of triples $(E, L, \phi)$} 
Let $\left(E, \phi \right)$ be a rank-2 $SL_2(\bC)$-Higgs bundle
and suppose the quadratic differential $q = \det(\phi)$ has only simple zeroes. 
The spectral curve $S \overset{\pi}{\rightarrow} X$
is a subset of $T^\ast X$ defined by taking the square-roots of $q$, 
i.e. solving for the eigenvalues of $\phi$.
There is the involution $\sigma: S \rightarrow S$ that 
acts on the fibers of $\pi$ exchanging the two square-roots.
By solving for the eigenvectors of $\phi$, one can define a line bundle 
$\cL_{(E, \phi)}$ on $S$ called the eigen-line bundle.
One can recover the Higgs bundle by taking the direct image 
of $\cL_{(E, \phi)} \otimes \pi^\ast(K)$ along $\pi$.
Hence up to isomorphisms, Higgs bundles $(E,\phi)$ with $\det(\phi) = q$ 
are in 1-1 correspondence with line bundles on $S$ 
(of appropriate degrees) -- this is known as 
Hitchin's spectral correspondence for smooth spectral curves \cite{Hit}.

Given a line bundle $L$ with an injection $L \rightarrow E$ 
(which possibly has zeroes), 
we can consider on $S$ the composition
\begin{align}
	\pi^\ast\left( L \right) \longrightarrow \pi^\ast\left(E \right) \longrightarrow  \cL_{(E, \phi)}^{-1} \pi^\ast\left( \det(E) \right)
	\label{BA-div-def-composition}
\end{align}
where the second map is the quotient of the embedding 
$\cL_{(E, \phi)} \hookrightarrow \pi^\ast(E)$.
The zero divisor of this composition consists of 
the pull-back of the zero divisor of $L \rightarrow E$ 
and points where $\pi^\ast(L)$ coincide with $\cL_{(E, \phi)}$ as subbundles of $\pi^\ast(E)$.

\begin{definition}
Let $(E, \phi)$ be
a Higgs bundle on $X$ with non-degenerate spectral curve $S \overset{\pi}{\rightarrow} X$
and $L$ a line bundle with an injection $L \rightarrow E$.
The \textit{Baker-Akhiezer (BA) divisor} 
associated to the data $(L \rightarrow E, \phi)$ is the 
 involution of the zero divisor of the composition $\pi^\ast\left( L \right) \rightarrow \pi^\ast\left(E \right) \rightarrow  \cL_{(E, \phi)}^{-1} \pi^\ast\left( \det(E) \right)$.
	\label{def-BA divisor-formal}
\end{definition}

BA divisors can be characterised concretely if $L$ is a subbundle of $E$, 
namely the injection $L \rightarrow E$ is nowhere vanishing. 
Recall the section $c(E, L, \phi)$ of the line bundle $KL^{-2} \det(E)$ 
from \eqref{lower-left-Higgs}, 
and suppose $\phi$ takes the form
$ \left(\begin{smallmatrix} a_\alpha & b_\alpha \\ c_\alpha 
& - a_\alpha \end{smallmatrix} \right)$
in local frames adapted to $L$.
The preimages along $S \overset{\pi}{\rightarrow} X$ 
of a zero $u_n$ of $c_\alpha \equiv c(E, L, \phi)$
consist of two points (counted with multiplicity), 
labeled by the eigenvalues $\pm a_\alpha(u_n)$ 
of $\phi$ at $u_n$.
Then the BA divisor of $\left( E, L,  \phi \right)$ 
is $\tbu = \sum_n \tilde{u}_n$ where $\tilde{u}_n$ is the point labeled by 
\begin{equation}\label{residue-Higgs}
    v_n = a_\alpha(u_n).    
\end{equation}
We see that the apparent singularities 
and the projection to $X$ of BA divisors are 
respectively the zeroes of $c(E, L, \nabla_\lambda)$ and 
$c(E, L, \phi)$.
The residue parameter $\nu_n$ in \eqref{scaled-residue}
can be regarded as a deformation of $v_n$ by a term linear in $\lambda$.

We refer to \cite{D-thesis, DT1} for the proof of the following proposition.
\begin{proposition} \label{prop-properties-BA}
Let $\tbu$ be the BA divisor of $\left( L \rightarrow E, \phi \right)$ on a nondegenerate spectral curve $S \overset{\pi}{\rightarrow} X$,
 and $\bu = \pi(\tbu)$.
Then:
\begin{enumerate}
     \item $\tbu$ has a summand of the form $\pi^{\ast}(\bu')$ 
     if and only if $L \rightarrow E$ vanishes at $\bu'$, 
     counted with multiplicity. 
     In particular, $\tbu$ contains no summand 
     equal to the pull-back of a divisor on $X$ 
     if and only if $L$ is a subbundle of $E$. 
     \item The projection $\bu$ satisfies
     $\mathcal{O}_X(\bu) \simeq KL^{-2}\det(E)$.
     If $L$ in particular is a subbundle of $E$, 
     then $\bu$ coincides with the zero divisor of $c_{i}(\phi)$.
     \item The eigen-line bundle $\cL_{(E, \phi)}$ of $\left(E,\phi \right)$ is isomorphic to $\pi^\ast\left(L K^{-1} \right) \otimes \mathcal{O}_S(\tbu)$. 
     In other words, $(E, \phi)$ is isomorphic to the direct image of 
     $\pi^\ast\left(L \right) \otimes \mathcal{O}_S(\tbu)$.
\end{enumerate}
\end{proposition}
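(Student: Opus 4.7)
The plan is to extract all three statements from the same elementary computation. By construction, the composition
\[
s_L\colon \pi^\ast L \longrightarrow \pi^\ast E \longrightarrow \cL_{(E,\phi)}^{-1} \otimes \pi^\ast \det(E)
\]
is a section of the line bundle $\pi^\ast(L^{-1}\det(E)) \otimes \cL_{(E,\phi)}^{-1}$ on $S$ whose zero divisor $D$ satisfies $\tbu = \sigma(D)$ by definition. The identity
\begin{equation*}
\cL_{(E,\phi)} \;\simeq\; \pi^\ast(L^{-1}\det(E)) \otimes \mathcal{O}_S(-D)
\end{equation*}
produced by $s_L$ is the main input for Parts 2 and 3, while Part 1 follows from a pointwise analysis of where $s_L$ vanishes.

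For Part 1, I would argue that $s_L$ vanishes at $\tilde{x}\in S$ for exactly two reasons: either the first arrow $\pi^\ast L \to \pi^\ast E$ is zero at $\tilde{x}$, which happens iff $L\to E$ vanishes at $x := \pi(\tilde{x})$, or the image of $\pi^\ast L$ at $\tilde{x}$ lies inside $\cL_{(E,\phi)}$, i.e.\ $L_x$ is the eigen-direction of $\phi_x$ labeled by $\tilde{x}$. The $(\Leftarrow)$ direction is immediate: if $L\to E$ vanishes on a divisor $\bu'$, the first arrow vanishes on $\pi^\ast \bu'$ with the correct ramification-weighted multiplicities, giving a $\sigma$-invariant summand that survives in $\tbu = \sigma(D)$. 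For $(\Rightarrow)$, if $\tbu \supseteq \pi^\ast \bu''$ then $D$ does too, so $s_L$ vanishes at both preimages of each $x\in \bu''$; off the branch locus of $\pi$, where the two eigen-lines $\cL_{(E,\phi)}|_{\tilde{x}}$ and $\cL_{(E,\phi)}|_{\sigma\tilde{x}}$ are distinct subspaces of $E_x$, this is possible only if $L_x \to E_x$ itself is zero. At a branch point, the same conclusion follows by tracking orders in the local square-root coordinate on $S$.

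For Part 2, combining the above identity with its $\sigma$-pullback gives
\begin{equation*}
\cL_{(E,\phi)} \otimes \sigma^\ast \cL_{(E,\phi)} \;\simeq\; \pi^\ast(L^{-2}\det(E)^2) \otimes \mathcal{O}_S(-D-\tbu).
\end{equation*}
Under the paper's convention $E \simeq \pi_\ast(\cL_{(E,\phi)} \otimes \pi^\ast K)$, together with the determinant-of-direct-image formula $\det \pi_\ast(\cdot) = \mathrm{Nm}_\pi(\cdot) \otimes \det \pi_\ast \mathcal{O}_S$ and $\det \pi_\ast \mathcal{O}_S = K^{-1}$, one computes $\mathrm{Nm}_\pi(\cL_{(E,\phi)}) \simeq K^{-1}\det(E)$, so $\cL_{(E,\phi)} \otimes \sigma^\ast \cL_{(E,\phi)} \simeq \pi^\ast(K^{-1}\det(E))$. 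Comparing yields $\mathcal{O}_S(D + \tbu) \simeq \pi^\ast(KL^{-2}\det(E))$. When $L$ is a subbundle, Part 1 guarantees $D$ and $\tbu$ have no common component, hence $D + \tbu = \pi^\ast \pi_\ast D = \pi^\ast \bu$, giving $\mathcal{O}_X(\bu) \simeq KL^{-2}\det(E)$. The identification of $\bu$ with the zero divisor of $c(E,L,\phi)$ follows because $c$ is the lower-left entry of $\phi$ in a local frame adapted to $L$ and vanishes exactly where $L$ is $\phi$-invariant, matching the non-pullback contribution to $D$.

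For Part 3, substitute $\mathcal{O}_S(-D) \simeq \mathcal{O}_S(\tbu) \otimes \pi^\ast(KL^{-2}\det(E))^{-1}$ (which is Part 2 rearranged) into the identity for $\cL_{(E,\phi)}$ to obtain, after cancellation of the $\det(E)$ factor, $\cL_{(E,\phi)} \simeq \pi^\ast(LK^{-1}) \otimes \mathcal{O}_S(\tbu)$. The reformulation in terms of direct image is then immediate from the identity $\pi^\ast L \otimes \mathcal{O}_S(\tbu) \simeq \cL_{(E,\phi)} \otimes \pi^\ast K$ and the defining property $E \simeq \pi_\ast(\cL_{(E,\phi)} \otimes \pi^\ast K)$. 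The main technical obstacle in this plan lies in Part 1 at the branch locus of $\pi$, where the two eigen-directions of $\phi$ coincide: a careful local computation in the square-root coordinate on $S$ is required to verify that orders of vanishing of $s_L$ at ramification points correspond correctly to the expected $\pi^\ast$-summands in $\tbu$ and cannot masquerade as an eigen-coincidence effect.
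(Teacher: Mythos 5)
Your proposal is essentially correct and follows the route that the paper's setup invites: everything is extracted from the section $s_L$ of $\pi^\ast(L^{-1}\det(E))\otimes \cL_{(E,\phi)}^{-1}$ whose zero divisor $D$ satisfies $\tbu=\sigma(D)$, together with the norm computation $\mathrm{Nm}_\pi(\cL_{(E,\phi)})\simeq K^{-1}\det(E)$. Note that the paper does not prove Proposition \ref{prop-properties-BA} in the text at all; it defers to \cite{D-thesis, DT1}, where (as the remarks around Theorem \ref{thm-BA} indicate) the argument is carried out more explicitly in terms of the Abelian differentials $a_\alpha, b_\alpha, c_\alpha$ in a frame adapted to $L$. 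Your abstract line-bundle argument and the explicit one are two presentations of the same content; yours is cleaner for Parts 2 and 3, while the explicit one makes the multiplicity bookkeeping in Parts 1 and 2 automatic.

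Two points should be tightened. First, the branch-locus step you flag in Part 1 does close, and by a soft parity argument rather than a delicate computation: in a frame adapted to $L$ near a ramification point $\tilde{x}$ over $x$, one has $s_L\propto \mu-a$ where $\mu$ is the tautological eigenvalue and $a$ the diagonal entry of $\phi$; in the local coordinate $t$ on $S$ with $t^2$ a coordinate on $X$, the pullback $a$ has even order in $t$ while $\mu$ has odd order (equal to $1$ since the zero of $\det(\phi)$ is simple), so $\mathrm{ord}_{\tilde{x}}(s_L)\leq 1$ whenever $L\to E$ is nonvanishing at $x$ --- hence no summand $\pi^\ast(x)=2\tilde{x}$ can arise from an eigenline coincidence at a branch point. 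Second, in Part 2 the passage from $\pi^\ast\mathcal{O}_X(\bu)\simeq\pi^\ast(KL^{-2}\det(E))$ to $\mathcal{O}_X(\bu)\simeq KL^{-2}\det(E)$ uses injectivity of $\pi^\ast$ on Picard groups, which holds here because the double cover is ramified (and should be said); also, the identity $D+\sigma(D)=\pi^\ast\pi_\ast(D)$ holds for any divisor on a double cover, so your appeal to ``no common component'' is unnecessary, and Part 2 holds without the subbundle hypothesis, as the statement requires.
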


Let us define an isomorphism class $[ L \rightarrow E, \phi ]$
of the input data of BA divisors
by saying that two representative data 
are isomorphic if there are isomorphisms of the underlying bundles and line bundles
that commute with the injections and Higgs fields
    \footnote{Since scalings are isomorphisms of line bundles,
    scaling the injections from line bundles
    to rank-2 bundles 
    will define the same isomorphism class $[ L \rightarrow E, \phi ]$.}.
Clearly BA divisors defined by isomorphic data coincide.
The following theorem
summarizes the invertible properties of 
the construction of BA divisors.
We refer to the discussion following the proof of theorem 8.1 in Hitchin's original work \cite{Hit} for an abstract proof 
and \cite{D-thesis, DT1} for a more explicit proof in terms of 
Abelian differentials.

\begin{theorem}\label{thm-BA}
Let $q$ be a quadratic differential whose zeroes are all simple 
and $S \overset{\pi}{\rightarrow} X$ its corresponding spectral curve.
Then the construction of BA divisors and remembering the line bundles define a bijection
\begin{align*}
	\left\{ [ L \rightarrow E, \phi ] \ | 
		\begin{array}{l}
			\det(\phi) = q \\		 
	\end{array} \right\}
	\longleftrightarrow 
	\left\{ ( [L], \tbu ) \ \bigg| \begin{array}{l}
			\tbu \text{ effective on } S,  \\
			K L^{-2}\det(E) \simeq \mathcal{O}_X(\pi(\tbu) )
	\end{array} \right\}.
\end{align*}
In particular, in the inverse direction, the injection
\begin{align}
L \longrightarrow E \simeq L \otimes \pi_\ast(\mathcal{O}_S(\tbu)),
\end{align}
is obtained by taking the direct image of the canonical section of $\mathcal{O}_S(\tbu)$.
\end{theorem}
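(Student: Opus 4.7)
The plan is to deduce the stated bijection from the combination of Hitchin's spectral correspondence for the smooth spectral curve $S \overset{\pi}{\to} X$ and the geometric content of Proposition \ref{prop-properties-BA}. Hitchin's correspondence provides a bijection between isomorphism classes of Higgs bundles $(E, \phi)$ with $\det(\phi) = q$ and line bundles $\cL$ on $S$ of the appropriate degree, via $(E, \phi) \simeq (\pi_\ast(\cL \otimes \pi^\ast K), \phi_{\cL})$ with $\cL = \cL_{(E, \phi)}$. The strategy is to enrich both sides: on the Higgs side by the auxiliary injection $L \to E$, and on the spectral side by the effective divisor $\tbu$ satisfying the stated normalisation.

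For the forward direction, I associate to $[L \to E, \phi]$ the pair $([L], \tbu)$ where $\tbu$ is the BA divisor of Definition \ref{def-BA divisor-formal}. Both the degree constraint $\mathcal{O}_X(\pi(\tbu)) \simeq K L^{-2}\det(E)$ and the identification $\cL_{(E, \phi)} \simeq \pi^\ast(L K^{-1}) \otimes \mathcal{O}_S(\tbu)$ are already established as parts (2) and (3) of Proposition \ref{prop-properties-BA}, so the assignment lands in the claimed target set.

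For the inverse direction, given $([L], \tbu)$ satisfying the constraint, I define $\cL := \pi^\ast(L K^{-1}) \otimes \mathcal{O}_S(\tbu)$ on $S$ and let $(E, \phi)$ be the Higgs bundle it determines via Hitchin's correspondence. The projection formula yields
$$
E \simeq \pi_\ast(\cL \otimes \pi^\ast K) \simeq \pi_\ast(\pi^\ast L \otimes \mathcal{O}_S(\tbu)) \simeq L \otimes \pi_\ast \mathcal{O}_S(\tbu),
$$
and the injection $L \to E$ is produced by tensoring $L$ with the direct image of the canonical section $\mathcal{O}_S \to \mathcal{O}_S(\tbu)$, precomposed with the natural inclusion $\mathcal{O}_X \hookrightarrow \pi_\ast \mathcal{O}_S$.

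The main step is verifying that these two constructions are mutually inverse. One direction is essentially automatic from Hitchin's correspondence: starting from $[L \to E, \phi]$, the Higgs bundle built from $\pi^\ast(L K^{-1}) \otimes \mathcal{O}_S(\tbu)$ agrees with $(E, \phi)$ because this line bundle equals $\cL_{(E, \phi)}$ by Proposition \ref{prop-properties-BA}(3), and one checks that the reconstructed injection from the canonical section matches the original $L \to E$ under this identification. The harder direction is to confirm that starting from $([L], \tbu)$ and then forming the BA divisor of the reconstructed triple returns exactly $\tbu$. This amounts to identifying, on $S$, the defining composition \eqref{BA-div-def-composition} with the natural section $\pi^\ast L \to \pi^\ast L \otimes \mathcal{O}_S(\tbu)$ induced by the canonical section of $\mathcal{O}_S(\tbu)$, after applying the involution $\sigma$; this is the essential calculation and the part I expect to be the main obstacle, though it is carried out in detail both abstractly in the discussion following Theorem 8.1 of \cite{Hit} and explicitly in terms of Abelian differentials in \cite{D-thesis, DT1}.
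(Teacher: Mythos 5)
Your outline follows exactly the route the paper itself takes: the paper gives no in-text proof of Theorem \ref{thm-BA}, deferring to the discussion after Theorem 8.1 of \cite{Hit} for the abstract argument (Hitchin's spectral correspondence enriched by Proposition \ref{prop-properties-BA}) and to \cite{D-thesis, DT1} for the explicit version in terms of Abelian differentials, and your reduction to precisely those two ingredients --- with the inverse injection given by the direct image of the canonical section of $\mathcal{O}_S(\tbu)$ --- is that same abstract argument. The one step you leave open, namely that the BA divisor of the reconstructed triple returns $\tbu$ (with the involution $\sigma$ accounted for), is exactly the content the paper also outsources to those references, so your proposal is no less complete than the paper's own treatment.
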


\paragraph{The map $\SOV$}
For $0 < -2d \leq g-1$,
consider a cotangent vector on $\N$ at $(E, L)$ 
which is the pull-back of some Higgs field $\phi$ on $E$ via 
$i^\ast: T^\ast_E \Nstable \rightarrow T^\ast_{(E,L)}\N$ 
where the spectral curve $S_\phi$ associated to $(E, \phi)$ is smooth.
We then can assign a BA divisor on $S_\phi$ which defines a configuration of 
$m$ points in $T^\ast X$.
If $i^\ast(\phi) = i^\ast(\phi')$ for another Higgs field $\phi'$ on $E$,
then $\phi - \phi'$ is a nilpotent Higgs field.
The BA divisor on $S_{\phi'}$ then defines the same configuration of points in 
$T^\ast X$. 
Restricting to the locus of $T^\ast \N$ where the zeroes of the 
corresponding $c(E, L, \phi)$ are all simple, 
we can define a rational map
\begin{align*}
&\SOV: T^\ast\N \dashrightarrow ( T^\ast X)^{[m]},
&m = 2g-2 - 2d.
\end{align*}

The following is the main result in \cite{DT1}.
\begin{theorem} (Theorem \ref{thm-SOV-Higgs})
    $\SOV$ is a dominant Poisson map w.r.t. canonical symplectic structures 
    whose generic fibers are $2^{2g}:1$.
\end{theorem}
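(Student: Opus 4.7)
My plan has two components: first establishing dominance and the $2^{2g}:1$ fiber count via the spectral correspondence of Theorem \ref{thm-BA}, and second verifying the Poisson property through an explicit pullback computation performed on $T^\ast \M$ and descended by symplectic reduction to $T^\ast \N$.

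For dominance and fiber count, I would start from a generic point $\tbu \in (T^\ast X)^{[m]}$, write $\tbu = \sum_n (u_n, v_n)$, and reconstruct the input data. Generically the quadratic differential $q$ with $q(u_n) = v_n^2$ (and zero divisor contained in $\bu$) is uniquely determined and its spectral cover $S \to X$ is smooth, so $\tbu$ lifts tautologically to an effective divisor on $S$. A choice of $[L]$ with $L^{-2} \simeq K_X^{-1}\mathcal{O}_X(\bu)$ is then a torsor over the 2-torsion subgroup of $\mathrm{Pic}^0(X)$, giving exactly $2^{2g}$ possibilities; for each, Theorem \ref{thm-BA} produces a Higgs bundle $(E, \phi)$ together with $L \hookrightarrow E$, with $\det(E) \simeq \mathcal{O}_X$ automatic from the degree equation. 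The well-definedness of $\SOV$ on $T^\ast \N$ (as opposed to on triples) is easy: adding a nilpotent Higgs field with kernel $L$ leaves both the spectral curve and the BA divisor invariant, so $i^\ast\phi = i^\ast\phi'$ always produces the same image.

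For the Poisson property, I would lift $\SOV$ to a $\bC^\ast$-equivariant rational map $\widetilde{\SOV}: H^{-1}(0) \dashrightarrow (T^\ast X)^{[m]}$ on the zero level set of the moment map inside $T^\ast \M$. Using Proposition \ref{prop-coordinate-Higgs}, the Darboux data $(\bmx, \bmz, \bmk^H, \bmzcheck^H)$ encode $\phi_+(p_r) = k_r^H$ and $\phi_0(q_i) = -\check{z}_i^H/2$; combined with the prime form factorization \eqref{phi+-prime-form} and the third-kind differential expansion for $\phi_0$ (the $\lambda = 0$ specialization of the formula in Section \ref{sect-abelian-diff}), one obtains $u_n$ and $v_n = \phi_0(u_n)$ as algebraic functions of the Darboux coordinates. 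I would then compute the pullback $\widetilde{\SOV}^\ast\!\left(\sum_n du_n \wedge dv_n\right)$ and show that it coincides with the canonical form $\sum_r dx_r \wedge dk_r^H + \sum_i dz_i \wedge d\check{z}_i^H$ modulo the ideal generated by the moment map $H = \bmx.\bmk^H$. Restriction to $H^{-1}(0)$ and $\bC^\ast$-equivariance on both sides then descend the identity to the symplectic reduction $T^\ast \N$.

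The main obstacle is the residue calculation underlying the previous paragraph. Implicit differentiation of the defining equations $\phi_+(u_n)=0$ produces $du_n$ in terms of $dx_r$, $dz_i$, $dk_r^H$ weighted by derivatives of the prime form product \eqref{phi+-prime-form}, while $dv_n$ inherits contributions from all the third-kind differentials $\omega^{\bq}_{p_r - p_1}$ and $\omega^{\bq}_{q_i - p_1}$ used to assemble $\phi_0$. Reorganising $\sum_n du_n \wedge dv_n$ into the canonical form is essentially a global separation-of-variables identity on the spectral curve, which I would attack by applying the residue theorem to a carefully chosen meromorphic 1-form on $S$ involving the eigenvector components. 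This combinatorial-residue step is the technical heart of the proof, and is the higher-genus analogue of the classical Sklyanin identity.
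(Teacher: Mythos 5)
Your architecture is the same as the paper's (which cites \cite{DT1} for this theorem and reproduces the method in Section \ref{sect-proof} for the $\lambda$-deformation): dominance and the $2^{2g}:1$ count via inverting the Baker--Akhiezer construction of Theorem \ref{thm-BA}, with the fiber given by the $2^{2g}$ square roots $L$ of $K_X\mathcal{O}_X(-\bu)$; and the Poisson property checked on the lift to $H^{-1}(0)\subset T^\ast\M$ in the Darboux coordinates of Proposition \ref{prop-coordinate-Higgs}, using the prime-form expression \eqref{phi+-prime-form} for $\phi_+$ and the third-kind differentials for $\phi_0$. Two slips in the first half: the quadratic differential $-\det\phi$ is pinned down by the interpolation conditions $q(u_n)=v_n^2$ alone (its zero divisor is certainly not contained in $\bu$), and this determines $q$ uniquely only when $m=3g-3$ and $\bu$ is $Q$-generic; for $2g-2<m<3g-3$ the spectral curve is not determined by the target point, and you still owe an argument that the residual freedom collapses under $i^\ast$ so that the fiber really is $2^{2g}$ points rather than positive-dimensional. (The paper handles this by reading off the point of $T^\ast\N$ directly from $(\bu,\bm{v})$ and $L$ through the coordinate formulas, not through a choice of $q$.)

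The genuine gap is in the Poisson verification. You correctly locate the difficulty, but then defer it to ``applying the residue theorem to a carefully chosen meromorphic $1$-form on $S$'' without exhibiting that form or the identity it is meant to produce; since you yourself call this the technical heart, the decisive step is not actually carried out, and it is also not the mechanism the paper uses. The paper's route (Lemma \ref{lemma-crucial} in its $\lambda=0$ specialisation) is a direct computation with no residue theorem on $S$: one first observes that $u_0,u_1,\dots,u_m$ are functions of the mutually Poisson-commuting half $(\blambda,\bmk^H)$ of the Darboux coordinates alone, which gives $\{u_n,u_m\}=0$ immediately; then logarithmic differentiation of the prime-form formula \eqref{phi+-prime-form-2} in $u_n$ reproduces exactly the combination of logarithmic derivatives of prime forms occurring in $v_n=\phi_0(u_n)$, yielding the identity $\partial_{u_n}F(\blambda(\bu),\bmk(\bu))=-\{v_n,F\}$ for $F$ Poisson-commuting with $\bmk^H$ and $\blambda$, hence $\{u_m,v_n\}=\delta_{nm}$. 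Finally $\{v_n,v_m\}=0$ follows from neither of these and requires the separate grading argument on polynomials in $(\bmk^H,\bkappa^H)$ from \cite{DT1}; your $2$-form formulation would subsume all three families of relations at once only after the full pullback identity is established, which is exactly the computation you have not supplied.
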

On one hand,
the map $\SOV$ is the high genus generalisation of the Separation of Variables 
technique applied to the classical Gaudin model, which is 
a variant of the Hitchin moduli space for $g=0$ \cite{Sklyanin, T18}.
On the other hand, it can be regarded as the classical limit of 
Drinfeld's approach to the geometric Langlands correspondence 
adapted to $G= SL_2(\bC)$ setting \cite{DT1, Drinfeld, Fre95}.

\paragraph{The map $\bm{\SOVdR}$} 
We have seen that from the data $(E, L, \nabla_\lambda)$ 
where $c = c(E, L, \nabla_\lambda)$ has simple zeroes, 
one can induce an oper with simple apparent singularities at 
$\mathrm{div}(c)$ and residue parameters \eqref{induced-parameters}.
By the same argument in defining $\SOV$ 
(cf. Proposition \ref{prop-surjectivity-failure}), 
for $0 < -2d \leq g-1$ and fixed $\lambda \in \bC^\ast$,
we can define a rational map
\begin{align*}
&\SOVdR: \MdRd \dashrightarrow \Mop,
&m = 2g-2 - 2d.
\end{align*}

In the next subsection, we will show that $\SOVdR$ is a Poisson map 
w.r.t. the Poisson structures we have defined on $\MdRd$ and $\Mop$.
To this end, it is more convenient to consider its lift 
to $H^{-1}(\lambda d)$, namely
\begin{equation}\label{SOVdR}
\begin{tikzcd}[column sep = large]
&\tMdRd \arrow[dashed]{r}{\widetilde{\SOVdR}} \supset H^{-1}(\lambda d) \arrow{d}
&\Mop \\
&\MdRd \arrow[dashed]{ur}[swap]{\SOVdR} &
\end{tikzcd}.    
\end{equation}

\subsection{Proof of Poisson property}
\label{sect-proof}
Let us state again the main theorem of this paper.
\begin{theorem} \label{thm-SOV-dR-2} (Theorem \ref{thm-SOV-dR-1})
For $0 < -2d \leq g-1$, the map 
\begin{align}
    &\SOVdR: \MdRd \dashrightarrow \Mop,
    &m = 2g-2 -2d,
\end{align}
is a dominant Poisson map.
\end{theorem}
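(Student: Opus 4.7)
The plan is to adapt the proof strategy for Theorem \ref{thm-SOV-Higgs} from \cite{DT1} to the affine $\lambda$-connection setting, working at the level of the lifted map $\widetilde{\SOVdR}$ in diagram \eqref{SOVdR}. Both the zero divisor of $\omega_+$ and the residue parameters $\nu_{\lambda,n}$ from \eqref{induced-parameters} are $\bC^\ast$-invariant under rescalings of the injection $L \hookrightarrow E$, since such a rescaling sends $\omega_+ \mapsto \epsilon^{-1}\omega_+$ (preserving zeros and the ratio $\omega_+''/\omega_+'$) and leaves $\omega_0$ invariant in local frames adapted to $L$. Hence $\widetilde{\SOVdR}$ descends from $H^{-1}(\lambda d) \subset \tMdRd$ to $\MdRd$, and it suffices to prove that the pull-back of the 2-form $\sum_n du_n \wedge d\nu_{\lambda,n}$ to $H^{-1}(\lambda d)$ agrees with the restriction of $\widetilde{\omega}_\lambda$.

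The key structural observation is that both $\omega_0$ and $\omega_+$ are affine in $\lambda$. From the formula in Section \ref{sect-abelian-diff},
\[
\omega_0 = \omega_0^H(\bmx,\bmk,\bkappa) + \lambda \left( \sum_{i=1}^g \omega^{\bq}_{q_i - p_1} - \sum_{i=1}^{g-d} \omega^{\bq}_{\check{q}_i - p_1} \right),
\]
where $\omega_0^H$ has the functional form of the Higgs component $\phi_0$ of Proposition \ref{prop-coordinate-Higgs} and the $\lambda$-correction depends only on the Picard coordinates $\blambda$; meanwhile $\omega_+ = c_\bmx(\nabla_\lambda)$ is determined by $(\bmx,\bmk,\blambda)$ alone via \eqref{phi+-prime-form}. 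Consequently the zeros $u_n$ of $\omega_+$ depend only on $(\bmx,\bmk,\blambda)$, the Higgs-type quantity $v_n := \omega_0^H(u_n)$ has the same functional dependence on Darboux coordinates as the Baker-Akhiezer residue \eqref{residue-Higgs} in the Higgs case, and
\[
\nu_{\lambda,n} = v_n + \lambda\, \mu_n(\bmx,\bmk,\blambda),
\]
where $\mu_n$ collects the evaluations of the $\lambda$-part of $\omega_0$ at $u_n$ together with the correction $-\tfrac{1}{4}\omega_+''(u_n)/\omega_+'(u_n)$ from \eqref{induced-parameters}.

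With these formulae in hand I would compute
\[
\widetilde{\SOVdR}{}^\ast\!\left( \sum_n du_n \wedge d\nu_{\lambda,n} \right) = \sum_n du_n \wedge dv_n + \lambda \sum_n du_n \wedge d\mu_n.
\]
By Theorem \ref{thm-SOV-Higgs} and the matching of Darboux coordinates via Proposition \ref{prop-coordinate-Higgs}, the first summand coincides with the restriction to $H^{-1}(\lambda d)$ of the canonical symplectic form $\sum_i d\lambda_i \wedge d\kappa_i + \sum_r dx_r \wedge dk_r$. It then remains to show that the correction $\lambda \sum_n du_n \wedge d\mu_n$ vanishes on $H^{-1}(\lambda d)$. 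Since both $u_n$ and $\mu_n$ depend only on $(\bmx,\bmk,\blambda)$, this 2-form lives in purely horizontal directions; combined with the $\bC^\ast$-invariance of the construction and the fact that $dH = \bmk \cdot d\bmx + \bmx \cdot d\bmk$ is the only $\bC^\ast$-invariant horizontal relation available on $H^{-1}(\lambda d)$, I expect the correction to be proportional to $dH$ and hence to restrict to zero.

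The main obstacle will be verifying this last identity directly. It reduces to a residue/bilinear computation on $X$ relating the normalized Abelian differentials of the third kind $\omega^{\bq}_{y - p_1}$, evaluated at the zeros $u_n$ of $\omega_+$, to the second-derivative ratio $\omega_+''(u_n)/\omega_+'(u_n)$ computed from the explicit prime-form expression \eqref{phi+-prime-form}; I expect the $\bq$-normalization condition \eqref{normalise-vanish-q} to be precisely what makes the matching work. Dominance of $\SOVdR$ follows independently and immediately from Proposition \ref{prop-inverse-SOVdR}: a generic oper with $m$ simple apparent singularities at a $Q$-generic divisor is induced by some irreducible triple $(E, L, \nabla_\lambda)$, so the image of $\SOVdR$ is dense in $\Mop$.
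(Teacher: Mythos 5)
Your overall strategy is the same as the paper's: work on the lift to $H^{-1}(\lambda d)$, observe that the zeroes of $\omega_+$ and the $\lambda$-corrections to the residue parameters are functions of $(\bmk,\blambda)$ alone (in fact they do not depend on $\bmx$ either, since $\omega_+\in H^0(X,KL^{-2})$ is determined by its values $k_r$ at the spanning divisor $\bp$), reduce the main identity to the Higgs-bundle computation of \cite{DT1}, and obtain dominance from Proposition \ref{prop-inverse-SOVdR}. However, recasting the key step as an identity of $2$-forms rather than of Poisson brackets opens a gap exactly at the point you call ``the main obstacle'', and the heuristic you offer there fails. The correction $\sum_n du_n\wedge d\mu_n$ is a $2$-form in $dk_r$ and $d\lambda_i$ only, whereas $dH=\bmk\cdot d\bmx+\bmx\cdot d\bmk$ has nonzero $dx_r$-components; hence no nonzero form of the shape $dH\wedge\alpha$ can equal it, and ``proportional to $dH$, hence zero on the level set'' cannot be the mechanism. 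In fact $\sum_n du_n\wedge d\mu_n=d\bigl(\sum_n\mu_n\,du_n\bigr)$ is the exterior derivative of a $1$-form on the leaf space of the Lagrangian foliation by $(\bmk,\blambda)$, and its vanishing is equivalent, given the Darboux property of $(u_n,v_n)$, to the relations $\{\nu_{\lambda,n},\nu_{\lambda,m}\}=0$. That is genuinely nontrivial: in the paper these relations require a separate grading argument on polynomials in $k_r,\kappa_i$ inherited from \cite{DT1}. What the observation ``$u_n$ and $\mu_m$ depend only on $(\bmk,\blambda)$'' gives for free is only $\{u_n,\mu_m\}=0$, i.e.\ $\{u_n,\nu_{\lambda,m}\}=\{u_n,v_m\}$; two functions of mutually Poisson-commuting coordinates have vanishing bracket while their differentials need not wedge to zero, so this does not yield $\sum_n du_n\wedge d\mu_n=0$.

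A secondary issue: the Higgs-case identity $\sum_n du_n\wedge dv_n=\widetilde{\omega}$ is established on the level set $H^{-1}(0)$, and the usable formula for $v_n$ there is obtained by rewriting $-\sum_{r\geq 2}x_rk_r\,\omega_{p_r-p_1}(u_n)$ via the constraint $\sum_r x_rk_r=0$. On $H^{-1}(\lambda d)$ the same rewriting produces the extra term $(\lambda d)\,d_x\log E(p_1,x)\mid_{x=u_n}$ (see the proof of Lemma \ref{lemma-crucial}), so the first summand of your decomposition does not literally coincide with the restriction of $\widetilde{\omega}_\lambda$ unless you absorb this term into $\mu_n$ and account for it. The paper sidesteps both difficulties by working with Poisson brackets throughout: Lemma \ref{lemma-crucial} shows that every $\lambda$-dependent term drops out of $\{\nu_{\lambda,n},F\}$ for $F=F(\blambda,\bmk)$, which settles $\{u_n,\nu_{\lambda,m}\}=\delta_{nm}$ and $\{u_n,u_m\}=0$, and then quotes the grading argument for $\{\nu_{\lambda,n},\nu_{\lambda,m}\}=0$. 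Your $\bC^\ast$-descent and dominance arguments are correct and agree with the paper.
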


The proof of Theorem \ref{thm-SOV-dR-2} closely mirrors that of Theorem 
\ref{thm-SOV-Higgs} in \cite{DT1}, 
except that there are additional $\lambda$-dependent 
terms in several formulas.
In the following we shall outline these formulas 
to keep track of these terms and to show that  
in the key step (Lemma \ref{lemma-crucial}), these terms 
do not pose any significant challenges.

The idea of the proof 
is to write the map $\SOVdR$ rather explicitly,
upon choosing a reference divisor $\bmr$,
in terms of the Darboux coordinates $(\bmx, \blambda, \bmk, \bkappa)$ 
of $\MdRd$.
To this end, we again express the data 
$(E, L, \nabla_\lambda)$ 
in terms of the restriction of $\nabla_\lambda$ 
to the open dense set $X_{\bq} \subset X$, 
$$ \nabla_\lambda\mid_{X_{\bq}}
= \lambda \partial 
+ \begin{pmatrix} \omega_0 & \omega_- \\ \omega_+ & -\omega_0 \end{pmatrix}, $$
(cf. \eqref{Abelian-differentials} and Proposition \ref{prop-abelian-differentials}).
Recall from section \ref{sect-abelian-diff}
that the Abelian differential $\omega_0$ can be expressed as
\begin{equation}
    \omega_0 = - \sum_{r=2}^N x_r k_r \omega^{\bq}_{p_r - p_1} 
    + \lambda \sum_{i=1}^g \omega^{\bq}_{q_i - p_1}
    - \lambda \sum_{i=1}^{g-d} \omega^{\bq}_{\check{q}_i - p_1} 
    - \frac{1}{2} \sum_{i=1}^g \kappa_i \omega_i
\end{equation}
where $\omega^{\bq}_{x-y}$ is an Abelian differential of the third kind
whose $0$-th Laurent coefficient at $q_i$ vanishes 
(cf. \eqref{normalise-vanish-q}).
The scaled residue parameter $\nu_{\lambda,n}$ 
associated to the apparent singularity $u_n$ is defined by 
\begin{align}\label{conjugate-lambda}
    &\nu_{\lambda,n} = \omega_0(u_n) - \lambda \frac{\omega_+''(u_n)}{4\omega_+'(u_n)},
    &n = 1, \dots, m,
\end{align}
where have taken derivative and evaluations of $\omega_+$
using some chosen local coordinates around $u_n$.
For this, 
we can write $\omega_0(u_n)$ even more explicitly by plugging the chain rule 
\begin{equation}\label{identity-Abel}
	\frac{\partial q_j(\bu)}{\partial u_n} 
	= \sum_{i=1}^g \frac{\partial q_j}{\partial \lambda_i} \bigg\vert_{\bm{\lambda}(\bu)} \frac{\partial \lambda_i}{\partial u_n} \bigg\vert_{u_n} 
	= -\frac{1}{2} \sum_{i=1}^g \left( dA^{-1} \mid_{\blambda} \right)_{ij} \omega_i(u_n),
\end{equation}
in equation \eqref{relations-Abelian-diff} that relates $\omega^{\bq}_{x-y}$ 
with the canonical Abelian differentials of the third kind $\omega_{x-y}$
whose $A$-cycles vanish. We then have
\begin{align}\label{omega0-u}
    \omega_0(u_n) = &
    - \sum_{r=2}^N  k_r  x_r \bigg(  \omega_{p_r - p_1}(u_n) + 2 \sum_{j=1}^g \omega_{p_r - p_1}(q_j) \frac{\partial q_j}{\partial u_n} \bigg)  
    \nonumber \\
    &+ \lambda \sum_{i=1}^g \omega^{\bq}_{q_i - p_1}
    - \lambda \sum_{i=1}^{g-d} \omega^{\bq}_{\check{q}_i - p_1} 
    - \frac{1}{2} \sum_{i=1}^g \kappa_i \omega_i.
\end{align}

Recall from \eqref{phi+-prime-form} that 
the Abelian differential $\omega_+$ can be expressed 
in terms of the prime forms on a fundamental domain of $X$
from its zero divisor $\bmu = \sum_{n=1}^m u_n$ and pole structure 
(cf. Proposition \ref{prop-abelian-differentials}).
Its evaluation at $p_r$ hence is 
\begin{equation}\label{phi+-prime-form-2}
    k_r= u_0 \frac{\prod_{i=1}^g E(p_r, q_i(\bm{u}) )^2 \prod_{k=1}^{N + g-1} E(p_r, u_k)}{\prod_{j=1}^{g-d} E(p_r,\check{q}_k)^2} (\sigma(p_r))^{2}
\end{equation}

The key step in the proof is the following lemma.
\begin{lemma}\label{lemma-crucial}
Let $F$ be a complex-valued function on an open set in $\tMdRd$ 
equipped with local Darboux coordinates $(\blambda, \bmx, \bkappa, \bmk)$ satisfying $\{k_r,F\}=0$ and $\{\lambda_\ell,F\}=0$.
Then 
\begin{align}\label{eqn-lem-cruc} \frac{\partial}{\partial u_n} F(\blambda(\bu), \bmk(\bu))
&= - \left\{ v_{\lambda,n}, F \right\}, \quad n=1,\dots,m; \nonumber \\
 u_0\frac{\partial }{\partial u_0} F(\blambda(\bu), \bmk(\bu))
&= \left\{ \nu_{\lambda,0}, F \right\}, \quad \nu_{\lambda, 0} \coloneqq \lambda d.
\end{align}
\end{lemma}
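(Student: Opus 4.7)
The plan is to match the two sides of \eqref{eqn-lem-cruc} coefficient-by-coefficient in the derivatives of $F$. Expanding the left-hand side by the chain rule gives
\[
\partial_{u_n} F(\blambda(\bu), \bmk(\bu))
= \sum_i \frac{\partial \lambda_i}{\partial u_n} \frac{\partial F}{\partial \lambda_i} + \sum_r \frac{\partial k_r}{\partial u_n} \frac{\partial F}{\partial k_r},
\]
while expanding the Poisson bracket $\{\nu_{\lambda,n}, F\}$ using the Darboux form \eqref{symplectic-form-dR} and the hypothesis $\partial_{\kappa_i} F = \partial_{x_r} F = 0$ gives
\[
\{\nu_{\lambda,n}, F\} = -\sum_i \frac{\partial \nu_{\lambda,n}}{\partial \kappa_i}\frac{\partial F}{\partial \lambda_i} + \sum_r \frac{\partial \nu_{\lambda,n}}{\partial x_r}\frac{\partial F}{\partial k_r}.
\]
Thus proving the first identity amounts to the two pointwise equalities
$\partial_{\kappa_i}\nu_{\lambda,n} = \partial_{u_n} \lambda_i$ and $-\partial_{x_r}\nu_{\lambda,n} = \partial_{u_n} k_r$.

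The key preliminary observation, which isolates what is genuinely new compared to the $\lambda = 0$ case of \cite{DT1}, is that the $\lambda$-dependent terms in $\nu_{\lambda,n}$ — namely the contributions $\lambda\sum_i \omega^{\bq}_{q_i - p_1}(u_n)$ and $-\lambda\sum_i \omega^{\bq}_{\check{q}_i - p_1}(u_n)$ in \eqref{omega0-u}, together with the Schwarzian correction $-\lambda\,\omega_+''(u_n)/(4\omega_+'(u_n))$ in \eqref{conjugate-lambda} — depend only on $\bu$, on $\check{\bq}$, and on $\bq$ (itself a function of $\blambda$ via \eqref{Abel-map}). None of these depend on $\bkappa$ or on $\bmx$, so they contribute nothing to $\partial_{\kappa_i}\nu_{\lambda,n}$ or $\partial_{x_r}\nu_{\lambda,n}$. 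The two pointwise identities to verify therefore reduce exactly to the computations carried out for the Higgs case in \cite{DT1}.

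For the $\kappa$-identity, \eqref{omega0-u} gives $\partial_{\kappa_i}\nu_{\lambda,n} = -\tfrac{1}{2}\omega_i(u_n)$, while the bundle isomorphism $KL^{-2}\simeq\mathcal{O}_X(\bu)$ combined with the Abel map yields $\blambda = \tfrac{1}{2}(A(K)+2A(\check{\bq})-A(\bu))$, hence $\partial_{u_n}\lambda_i = -\tfrac{1}{2}\omega_i(u_n)$, matching on the nose. For the $x$-identity one reads off from \eqref{omega0-u} that $-\partial_{x_r}\nu_{\lambda,n} = k_r\bigl[\omega_{p_r-p_1}(u_n) + 2\sum_j \omega_{p_r-p_1}(q_j)\,\partial_{u_n} q_j\bigr]$, while logarithmically differentiating the prime-form expression \eqref{phi+-prime-form-2} for $k_r = \omega_+(p_r)$ and invoking the standard identity $\partial_w\log[E(p,w)/E(p',w)] = \omega_{p-p'}(w)$ gives exactly the same formula for $\partial_{u_n}\log k_r$. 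The apparent asymmetry of the role of $p_1$ is absorbed by the moment-map constraint $\bmx\cdot\bmk = \lambda d$ on $H^{-1}(\lambda d)$, which is precisely the Serre-duality identity of Corollary \ref{cor-Serre-lambda}.

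The second identity of \eqref{eqn-lem-cruc} is the cleanest part: \eqref{phi+-prime-form-2} shows $k_r$ is linear in $u_0$ while $\blambda(\bu)$ is independent of $u_0$, so $u_0\partial_{u_0}F = \sum_r k_r \partial_{k_r} F$, which equals the Poisson bracket of $F$ with the moment map $H = \bmx\cdot\bmk$ (whose value on the relevant level set is $\lambda d = \nu_{\lambda,0}$, explaining the notation). The main technical obstacle is therefore the prime-form calculation for the $x$-identity, together with careful bookkeeping of the $r=1$ term via the moment-map constraint — but both are essentially identical to the treatment of \cite{DT1} and contain no new $\lambda$-dependent subtleties, precisely because the $\lambda$-corrections live in the Poisson-commutant of every $F$ satisfying the hypothesis.
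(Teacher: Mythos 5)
Your proposal is correct and follows essentially the same route as the paper: chain rule on the left-hand side, reduction to the pointwise identities $\partial_{\kappa_i}\nu_{\lambda,n}=\partial_{u_n}\lambda_i$ and $-\partial_{x_r}\nu_{\lambda,n}=\partial_{u_n}k_r$ via the Darboux form and the hypothesis on $F$, the observation that the explicitly $\lambda$-dependent terms of \eqref{conjugate-lambda}--\eqref{omega0-u} are independent of $(\bmx,\bkappa)$ and hence drop out, the Abel-map and prime-form computations for the two coefficient identities, and the moment-map constraint $\bmx\cdot\bmk=\lambda d$ to absorb the $p_1$-terms (exactly as in the paper, which also uses this rewriting on the level set $H^{-1}(\lambda d)$) together with the same one-line argument for the $u_0$-relation.
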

\begin{proof}
On one hand, it follows from \eqref{phi+-prime-form-2} that 
\begin{equation*}
	\frac{1}{k_r}	\frac{\partial k_r}{\partial u_n}
	= \frac{\partial \log E(p_r, u_n)}{\partial u_n}  + 2 \sum_{i=1}^g \frac{\partial \log E(p_r, q_i)}{\partial q_i} \frac{\partial q_i(u_n)}{\partial u_n}.
\end{equation*}
We then can write
\begin{align}\label{d_un-final}
\frac{\partial F}{\partial u_n}
=  - \sum_{r=1}^N \frac{\partial k_r}{\partial u_n} \frac{\partial F}{\partial k_r} - \sum_{i=1}^g \frac{\partial \lambda_i}{\partial u_n} \frac{\partial F}{\partial \lambda_i} 
= - \sum_{r=1}^N \frac{\partial k_r}{\partial u_n} \frac{\partial F}{\partial k_r} + \frac{1}{2} \sum_{i=1}^g \omega_i(u_n) \frac{\partial F}{\partial \lambda_i}
\nonumber \\
= \sum_{r=1}^N  \bigg( \frac{\partial \log E(p_r, u_n)}{\partial u_n} 
	+ 2 \sum_{i=1}^g \frac{\partial \log E(p_r, q_i)}{\partial q_i} \frac{\partial q_i}{\partial u_n} 	\bigg) k_r \frac{\partial F}{\partial k_r}
	- \frac{1}{2} \sum_{i=1}^g \omega_i(u_n) \frac{\partial F}{\partial \lambda_i}.
\end{align}
Note that this means the LHS of the first relation in \eqref{eqn-lem-cruc} 
is not $\lambda$-dependent.

On the other hand, in computing $\{\nu_{\lambda,n}, F \}$ 
we can make use of the relations $\sum_{r=1}^N x_r k_r = \lambda d$ 
and $\omega_{p_+ - p_-}(x) = d_x \log E(p_+, x) - d_x \log E(p_-, x)$
to rewrite the first term in \eqref{omega0-u} as
$$ - \sum_{r=2}^N  k_r  x_r \omega_{p_r - p_1}(u_n) =
- \sum_{r=1}^N k_r x_r d_x \log E(p_r, x) \mid_{x=u_n} 
+ (\lambda d) \hspace{2pt} d_x \log E(p_1, x)\mid_{x = u_n}.$$
Note also that when computing $\{\nu_{\lambda,n}, F \}$ 
using \eqref{conjugate-lambda} and \eqref{omega0-u}, the only 
contribution comes from the Poisson brackets
\begin{equation*}
\frac{\partial F}{\partial k_r}=\big\{\,x_r\,, \,F\, \big\},
\qquad - \frac{\partial F}{\partial \lambda_i}=\big\{\,\kappa_i\,,\, F\,\big\}.
\end{equation*}
All $\lambda$-dependent terms in $\nu_{\lambda, n}$ hence do not 
contribute to $\{\nu_{\lambda,n}, F \}$.
The first relation in \eqref{eqn-lem-cruc} now follows from direct comparison.
For the second relation, note that 
$\frac{\partial k_r}{\partial u_0} = \frac{k_r}{u_0}$, 
and hence
$$ \left\{ \lambda d, F \right\} 
= \left\{ \sum_{r=1}^N x_r k_r, F \right\}
= \sum_{r=1}^N k_r \frac{\partial F}{\partial k_r} 
= \sum_{r=1}^N u_0 \frac{\partial k_r}{\partial u_0} 
\frac{\partial F}{\partial k_r} 
= u_0 \frac{\partial F}{\partial u_0}. $$
\end{proof}

\begin{proof}(Proof of the main theorem)
The relations
\begin{align*}
    &\{u_n, \nu_{\lambda, m} \} = \delta_{nm},
    &\{u_0, \nu_{\lambda, 0} \} = u_0
\end{align*}
follow from Lemma \ref{lemma-crucial}.
The relations $\{u_n, u_m \} = 0$ follow from observation that $u_n = u_n(\blambda, \bmk)$ 
while $\lambda_1, \dots, \lambda_g, k_1, \dots, k_N$ are Poisson commuting.
The relations $\{\nu_{\lambda, n}, \nu_{\lambda, m} \} = 0$ 
follow from an argument using grading 
on the algebra of 
polynomial functions in variables $k_r$ and $\kappa_i$ (cf. \cite{DT1}).
This shows that $\widetilde{\SOVdR}$ and consequently $\SOVdR$ are Poisson maps.
That these maps are dominant follows from Proposition \ref{prop-inverse-SOVdR}.
\end{proof}

\section{Discussion}
\subsection{Relation to wobbly bundles} \label{sect-wobbly}
In constructing an analytic space that parametrises opers with 
$n + 3g-3$ apparent 
singularities on Riemann surfaces with $n$ punctures, Iwasaki identified 
and removed the singular loci. 
This locus is defined by effective divisors of degree $n + 3g-3$
such that there exist quadratic differentials vanishing there 
and having poles at the specified punctures
(cf. equation (6.3) in \cite{Iwa91}).
In the compact Riemann surfaces setting, the analogous locus is formed by 
\begin{equation}\label{wobbly-locus-2}
    \mathcal{M}_{\mathrm{op}}^{3g-3}(Q) \coloneqq
    \{ [(u_1, \nu_1), \dots, (u_{3g-3}, \nu_{3g-3})] \mid \sum_{n=1}^{3g-3} u_n 
\text{ is $Q$-special} \} \subset \mathcal{M}_{\mathrm{op}}^{3g-3}.
\end{equation}
This is a regular locus in the affine bundle $\Mop$ 
defined explicitly via \eqref{residue-transform} but needs to be 
removed if one wants a well-behaved parameter space for opers 
with $3g-3$ apparent singularities.
A quick argument to convince oneself is to observe that the nonhomogeneous 
linear system \eqref{linear-system} becomes degenerate at this locus: 
not all vectors $\Vec{\nu}$ give opers with corresponding residue parameters, 
and in case there is an oper with such residue parameters it is not unique.

The locus $\mathcal{M}_{\mathrm{op}}^{3g-3}(Q)$
defined in \eqref{wobbly-locus-2} is related to one component of 
the so-called wobbly divisor on $\Nstable$. A bundle $E$ is called 
very stable if it does not admit nonzero nilpotent Higgs fields, 
and wobbly if it is stable but not very stable \cite{DP09}.
The significance of wobbly bundles has been noted early on:
Laumon showed that very stable bundles are stable, 
and Drinfeld conjectured that the wobbly locus $\mathcal{W}$ is a divisor on 
the moduli space of stable vector bundles \cite{Laumon88}.
Pal-Pauly \cite{Pal-Pauly} proved this conjecture for $G=SL_2(\bC)$,
namely for the case of stable rank-2 bundles having fixed determinant $\Lambda$ 
which can be assumed to be of degree $0$ or $1$.
They furthermore showed that 
\begin{equation}\label{wobbly-components}
	\mathcal{W} = 
	\begin{cases}
		\mathcal{W}_{\deg(\Lambda)} \cup \mathcal{W}_{\deg(\Lambda) + 2} \cup \dots \cup \mathcal{W}_{g} 
		& \text{for } g\equiv \deg(\Lambda) \text{ mod } 2,  \\
		\mathcal{W}_{\deg(\Lambda)} \cup \mathcal{W}_{\deg(\Lambda) + 2} \cup \dots \cup \mathcal{W}_{g - 1} 
		& \text{for } g\equiv \deg(\Lambda) -1 \text{ mod } 2,
	\end{cases}
\end{equation}
where $\mathcal{W}_k$ is the closure in $\mathcal{N}_\Lambda$ of the locus 
of bundles admitting some subbundle $L$ such that $\deg(K L^2 \Lambda^{-1}) = k$. 
All $\mathcal{W}_k$ are irreducible divisors except for 
$\mathcal{W}_0$ which is an union of $2^{2g}$ irreducible divisors. 
The following result from \cite{D} gives a criterion for producing wobbly 
bundles by taking direct images of distinguished line bundles 
from a spectral curve.

\begin{theorem}\cite{D}
Consider a smooth spectral curve $S \overset{\pi}{\rightarrow} X$ defined by a 
quadratic differential on $X$, and $\widetilde{\bu}$ an effective 
divisor on $X$ such that $\bu = \pi(\widetilde{\bu})$ is $Q$-special 
and $2g-2 < \deg(\bu) \leq 4g-4$.
Then the rank-2 bundle $\pi_\ast(\mathcal{O}_S(\widetilde{\bu}))$ on $X$ 
is wobbly, 
provided that it is stable (which is the generic case). 
In this case, upon a twist by a line bundle to adjust the determinant, 
$\pi_\ast(\mathcal{O}_S(\widetilde{\bu}))$ is contained in 
$\mathcal{W}_{4g-4-\deg(\bu)}$.
\end{theorem}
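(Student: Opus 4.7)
The plan is to use the $Q$-special hypothesis directly to build a nonzero nilpotent Higgs field on $E := \pi_\ast \mathcal{O}_S(\tbu)$: the sub-line bundle serving as its kernel will come from the canonical section of $\mathcal{O}_S(\tbu)$, while the nilpotent entry will come from a quadratic differential vanishing on $\bu$. Reading off the degree of this kernel against \eqref{wobbly-components} then identifies the Pal--Pauly component.

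First I would push forward the tautological section $1 \in H^0(S, \mathcal{O}_S(\tbu))$ to a nonzero $s \in H^0(X, E)$. Generically -- namely when $\tbu$ contains no complete $\sigma$-orbit of $\pi$, which holds whenever both $\tbu$ and $\bu$ are reduced -- the section $s$ is nowhere vanishing and determines a line subbundle $\mathcal{O}_X \hookrightarrow E$. By the projection formula for double covers,
\[ \det E \simeq \det \pi_\ast \mathcal{O}_S \otimes \mathcal{O}_X(\pi(\tbu)) \simeq K^{-1}(\bu), \]
so the short exact sequence has quotient $K^{-1}(\bu)$. The hypothesis $\deg(\bu) > 2g-2$ gives $\deg E = \deg(\bu) - 2g + 2 > 0$, so $\mathcal{O}_X$ sits strictly below the slope of $E$, consistent with stability.

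Next, $Q$-specialness of $\bu$ supplies a nonzero $q' \in H^0(X, K^2(-\bu))$, which I would view as a nonzero morphism $K^{-1}(\bu) \to K$. Composing with the surjection $E \twoheadrightarrow K^{-1}(\bu)$ and the inclusion $K \simeq \mathcal{O}_X \otimes K \hookrightarrow E \otimes K$ gives
\[ \psi \colon E \twoheadrightarrow K^{-1}(\bu) \xrightarrow{\,q'\,} K \hookrightarrow E \otimes K. \]
By construction $\psi \neq 0$ with image contained in the subbundle $\mathcal{O}_X \otimes K$, so $\psi$ is nilpotent with kernel $\mathcal{O}_X$. Granted stability, $E$ is wobbly. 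To locate $E$ in $\mathcal{W}_{4g-4-\deg(\bu)}$, I would then twist by a square root $M$ of $K(-\bu)$: the subbundle transports to $M \hookrightarrow E\otimes M$, and a direct computation gives $\deg(K M^2 \det(E\otimes M)^{-1}) = 4g - 4 - \deg(\bu)$, matching \eqref{wobbly-components}.

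\textbf{Main obstacle.} The only real subtlety is controlling the zero divisor of $s$. If $s$ vanishes on an effective $D \geq 0$, its saturation is $\mathcal{O}_X(D) \hookrightarrow E$ and the construction lands $E$ in the higher stratum $\mathcal{W}_{4g-4-\deg(\bu) + 2\deg(D)}$ rather than $\mathcal{W}_{4g-4-\deg(\bu)}$. The generic reducedness of $\tbu$ and $\bu$ rules this out, so the stated component is reached in the generic case. Conceptually, this construction is the bundle-level counterpart of the degeneration of the maps $\SOV$ and $\SOVdR$ over $Q$-special loci, and completes the bridge between apparent singularities of opers and wobbly bundles discussed in Section~\ref{sect-wobbly}.
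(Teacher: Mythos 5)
The paper does not prove this statement: it is quoted from the external reference \cite{D}, so there is no internal proof to compare against and I can only assess your argument on its own terms. Your core construction is correct and is surely the intended one. The direct image of the canonical section gives $\mathcal{O}_X \to E$, which by Proposition \ref{prop-properties-BA}(1) is a subbundle inclusion exactly when $\tbu$ contains no summand of the form $\pi^\ast(\bu')$; the determinant computation $\det E \simeq K^{-1}(\bu)$ agrees with Proposition \ref{prop-properties-BA}(2); and since $\mathcal{O}_X(\bu)\simeq K\det E$, the line bundle $K^2(-\bu)$ is precisely $K\,\mathcal{O}_X^2\det(E)^{-1}$, so a nonzero element of $Q_\bu$ is literally a nonzero nilpotent Higgs field on $E$ with kernel $\mathcal{O}_X$. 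One observation: for $2g-2<\deg\bu<3g-3$ a nonzero element of $Q_\bu$ exists for every $\bu$ by Riemann--Roch, so in that range your argument never invokes $Q$-specialness. That is logically harmless (you prove more than asked), but it signals that the hypothesis only genuinely bites for $\deg\bu\ge 3g-3$, and you should be aware that your proof does not explain the role the hypothesis plays at low degree.

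The real loose ends concern the component identification. The theorem carries no reducedness hypothesis on $\bu$ or $\tbu$, yet your location of $E$ in $\mathcal{W}_{4g-4-\deg(\bu)}$ is only established when the section $s$ is nowhere vanishing; when $\tbu$ contains $\pi^\ast(\bu')$ one has $E\simeq\mathcal{O}_X(\bu')\otimes\pi_\ast\mathcal{O}_S(\tbu-\pi^\ast\bu')$, the kernel becomes $\mathcal{O}_X(\bu')$, and the index shifts to $4g-4-\deg\bu+2\deg\bu'$. Declaring this non-generic does not dispose of it: either an additional hypothesis is needed, or one must argue that the deeper Segre-type locus so obtained is contained in $\mathcal{W}_{4g-4-\deg\bu}$ (a containment you neither state nor prove). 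Relatedly, for $g\ge 3$ and $2g-2<\deg\bu<3g-4$ the index $4g-4-\deg\bu$ exceeds $g$ and therefore does not appear in the list \eqref{wobbly-components} of divisorial components, so matching your output against that decomposition requires saying what $\mathcal{W}_k$ means for $k>g$. Finally, your normalising twist $M^2=K(-\bu)$ exists only when $\deg\bu$ is even; for odd $\deg\bu$ one must normalise to $\deg\Lambda=1$, a cosmetic but necessary adjustment. None of these affects the wobbliness assertion itself, which your construction establishes in all cases (replace $\mathcal{O}_X$ by the saturation of $s$ and note that $H^0(K^2(-\bu)(2D))\supseteq H^0(K^2(-\bu))\ne 0$).
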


For $S \subset \Mop$
denote by $i \circ \SOVdR^{-1}(S)$ the locus in $\Nstable$ 
defined by projecting $\SOVdR^{-1}(S)$ to $\N$ and then to $\Nstable$.
We have the following.

\begin{proposition}
\begin{enumerate}
    \item If $(E, L, \nabla_\lambda)$ produces an oper with 
$3g-3$ simple apparent singularities that are zeroes of a quadratic differential,
then $E$ is wobbly with $L$ the kernel of some nilpotent Higgs fields on $E$.
    \item The closure in $\Nstable$ of the locus
$i \circ \SOVdR^{-1}(\mathcal{M}_{\mathrm{op}}^{3g-3}(Q))$
is the wobbly component $\mathcal{W}_{g-1}$.
\end{enumerate}
\end{proposition}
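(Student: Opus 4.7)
The plan for part (1) is direct and uses only the explicit description of induced opers from Section \ref{sect-define-SOV}: the apparent singularities of the oper induced from $(E, L, \nabla_\lambda)$ coincide with the zero divisor of the section $c(E, L, \nabla_\lambda) \in H^0(X, KL^{-2})$, whose degree equals $2g-2-2d = 3g-3$. If all $3g-3$ simple zeros lie in $\divisor(q)$ for some nonzero quadratic differential $q \in H^0(X, K^2)$, then $q/c(E, L, \nabla_\lambda)$ is a nonzero holomorphic section of $KL^2$. By the identification in the discussion following Proposition \ref{prop-immersion}, $H^0(X, KL^2)$ parametrises nilpotent Higgs fields on $E$ with kernel $L$, so such a field exists; since $(E, L, \nabla_\lambda) \in \MdRd$ forces $E$ stable, the bundle $E$ is wobbly with $L$ in the kernel of a nonzero nilpotent Higgs field.

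Part (2) then reduces to two inclusions. The forward direction is immediate from part (1) together with the definition of $\mathcal{W}_{g-1}$: any such $E$ admits a subbundle $L$ of degree $d = (1-g)/2$, so $\deg(KL^2) = g-1$, placing $E$ in the defining locus of $\mathcal{W}_{g-1}$ from \eqref{wobbly-components}, and closures preserve this. For the reverse direction, I plan to show that a dense subset of $\mathcal{W}_{g-1}$ already lies in $i \circ \SOVdR^{-1}(\mathcal{M}_{\mathrm{op}}^{3g-3}(Q))$. A generic $E \in \mathcal{W}_{g-1}$ is stable, admits a unique maximal subbundle $L$ of degree $d$, and (by Pal-Pauly's identification of $\mathcal{W}_{g-1}$ as a wobbly component) satisfies $h^0(X, KL^2) > 0$. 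By Proposition \ref{prop-surjection-c-2}, the map from $\lambda$-connections on $E$ to the affine hyperplane $\{c : \langle c, \bmx \rangle = \lambda d\}$ in $H^0(X, KL^{-2})$ is surjective; pick a generic $c$ with simple zeros at a reduced divisor $\bmu$, and an irreducible $\lambda$-connection $\nabla_\lambda$ realising it. Since $h^0(X, KL^2) > 0$, multiplying $c$ by any nonzero $t \in H^0(X, KL^2)$ gives a quadratic differential $q = c \cdot t \in H^0(X, K^2)$ vanishing at $\bmu$, so $\bmu$ is $Q$-special. Hence $(E, L, \nabla_\lambda) \in \SOVdR^{-1}(\mathcal{M}_{\mathrm{op}}^{3g-3}(Q))$, and closing up in $\Nstable$ recovers $\mathcal{W}_{g-1}$.

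The hard part is the bookkeeping of genericity conditions in the reverse inclusion: Proposition \ref{prop-surjection-c-2} requires $E$ stable with $L$ uniquely embedded, the resulting $\lambda$-connection must be irreducible to land in $\MdRd$, and $c$ must have simple zeros to land in the regular locus where $\SOVdR$ is defined. Each is an open condition satisfied on a dense subset, and together they cut out a dense open subset of $\mathcal{W}_{g-1}$; the exceptional loci (bundles with multiple maximal subbundles of degree $d$, reducible $\lambda$-connections, the boundary of $\mathcal{W}_{g-1}$ itself) are absorbed by passing to closures in $\Nstable$. The subtler input is Pal-Pauly's identification of $\mathcal{W}_{g-1}$ as a component of the wobbly divisor, which guarantees that $h^0(X, KL^2) > 0$ holds on a dense subset of $\mathcal{W}_{g-1}$ and not merely on a proper subvariety, so the construction above is indeed available for generic $E$.
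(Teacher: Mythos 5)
The paper states this proposition without proof, so there is nothing of the authors' to compare your argument against; judged on its own terms, your proposal is essentially correct. The mechanism in part (1) is the right one: the apparent singularities form the reduced divisor $\mathrm{div}(c)$ of $c = c(E,L,\nabla_\lambda)\in H^0(X,K_XL^{-2})$, so a nonzero $q\in H^0(X,K_X^2)$ vanishing there gives the nonzero holomorphic section $q/c$ of $K_XL^2$, which by the identification in the proof of Proposition \ref{prop-immersion} is a nonzero nilpotent Higgs field with kernel $L$; stability of $E$ is built into $\MdRd$. For the reverse inclusion in part (2), combining the surjectivity statement of Proposition \ref{prop-surjection-c-2} with the observation that $c\cdot t\in H^0(X,K_X^2)$ vanishes on $\mathrm{div}(c)$ for any $0\neq t\in H^0(X,K_XL^2)$ is the natural argument, and it dovetails with the spectral-curve criterion quoted from \cite{D} in the same section.

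Two points deserve correction or comment. First, for $-2d=g-1$ a generic stable bundle admits $2^g$ maximal subbundles, not one (Proposition \ref{prop-finite-covering}); what your argument actually requires is only that the particular $L$ with $h^0(X,K_XL^2)>0$ be maximal, hence uniquely embedded up to scaling, which is the hypothesis of Proposition \ref{prop-surjection-c-2}. Second, $\SOVdR^{-1}(\mathcal{M}_{\mathrm{op}}^{3g-3}(Q))$ is a subset of $\MdRd$, and over a pair $(E,L)$ with $h^0(X,K_XL^2)>0$ the fiber of $\MdRd$ is strictly larger than the image of $i_\lambda$ (Proposition \ref{prop-surjectivity-failure}), so part (1) as you phrase it only covers points arising from genuine triples. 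This is harmless --- the section $\omega_+\in H^0(X,K_XL^{-2})$ determining the positions is defined for every point of $\MdRd$, so the same division argument applies verbatim --- but it should be said when deducing the forward inclusion of part (2). The remaining genericity claims (that a generic $c$ in the affine hyperplane has simple zeros, that the realising $\nabla_\lambda$ can be taken irreducible) are asserted rather than proved, but this is consistent with the level of detail the paper itself adopts in defining $\SOVdR$ as a rational map.
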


\subsection{Families of $\lambda$-connections}
\label{sect-families-connections}
By construction, fixing a reduced effective divisor $\bu$ on $X$ 
defines a Lagrangian (Poisson) leaf in $(T^\ast X)^{[3g-3]}$ 
(respectively, $\mM_{\mathrm{op}}^{3g-3}$).
In this section, we will attempt to have a rather preliminary discussion on 
the space of Higgs bundles $(E,\phi) \in \mM_H$ 
($\lambda$-connections $(E, \nabla_\lambda) \in \mM_\lambda$)
that induce these leaves.
We will sketch a relation between these two spaces 
and draw comparison to the Lagrangian leaves defined by the 
$\bC^\ast$ on the Hodge moduli space $\mM_{Hod}$
of $\lambda$-connections.

\subsubsection{Lagrangian leaves and the conformal limit}
\paragraph{Lagrangian upward flows in Hitchin moduli space}
Recall from \cite{Hit} that 
the Hitchin moduli space $\mM_H$ admits a 
$\bC^\ast$-action that scales the Higgs fields. 
A Higgs bundle $\cE = (E_0, \phi_0) \in \mM_H^{\bC^\ast}$ has an upward flow 
$W^0_{\cE}$
consisting of Higgs bundles $(E,\phi)$ with 
$\underset{\epsilon \rightarrow 0}{\lim} [(E, \epsilon \phi)] = [\cE]$.
For example, for $\cE = (E_0, 0)$ with $E_0$ stable,
we have $W^0_\cE$ consist of Higgs fields on $E_0$,
and for $\cE = (E_0, \phi_0)$ with $E_0$ destabilised by a subbundle 
$L_0$, we have $W^0_\cE$ consist of Higgs bundles $(E,\phi)$
where $E$ is also destabilised by $L_0$ and the zero divisor of $c(E, L_0, \phi)$ 
coincides with that of $c(E_0, L_0, \phi_0)$. 

On the other hand, the $\bC^\ast$-action acts linearly on $T_\cE \mM_H$.
Consider the weight decomposition 
$T_\cE \mM_H = \oplus_{k \in \mathbb{Z}} {{(T_\cE} \mM_H)}_k$ 
where ${{(T_\cE} \mM_H)}_k$ is the weight space in which $\epsilon \in \bC^\ast$ 
acts as $\epsilon^k$, and let 
$V_\cE \coloneqq \oplus_{k >0} {{(T_\cE} \mM_H)}_k$
be the positive weight subspace.
\begin{proposition} \label{prop-weight-space} \cite{BB, HH22}
If $\cE$ is stable then $W^0_\cE \simeq V_\cE \simeq \bC^{3g-3}$ 
as varieties with $\bC^\ast$-action.
Furthermore, $W^0_\cE$ is a Lagrangian in $\mM_H$.
\end{proposition}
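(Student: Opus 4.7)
The plan is to invoke the \BB (BB) theorem at the smooth fixed point $\cE \in \mM_H$, combined with the observation that the natural symplectic form $\omega_H$ on $\mM_H$ has weight $1$ under the $\bC^\ast$-action that scales Higgs fields. Stability of $\cE$ ensures that $\mM_H$ is smooth at $\cE$ of dimension $6g-6$, and the $\bC^\ast$-action linearises to a weight decomposition $T_\cE \mM_H = \oplus_{k \in \mathbb{Z}} (T_\cE \mM_H)_k$. BB applied at such a smooth fixed point identifies the attracting cell $W^0_\cE$ with the positive-weight subspace $V_\cE = \oplus_{k > 0}(T_\cE \mM_H)_k$ as $\bC^\ast$-varieties, yielding $W^0_\cE \simeq V_\cE \simeq \bC^{\dim V_\cE}$. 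Existence of the $\epsilon \to 0$ limit defining $W^0_\cE$ follows from Simpson's properness theorem for $\MHodge \to \bC$.

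Next, I would pin down $\dim V_\cE$ and establish isotropy using that $\omega_H$ has weight $1$: on the dense open $T^\ast \Nstable \subset \mM_H$, $\omega_H$ is the canonical cotangent symplectic form, and scaling Higgs fields by $\epsilon$ scales cotangent fibers, hence $\omega_H$, by $\epsilon$. Nondegeneracy then provides a perfect pairing
\begin{equation*}
    (T_\cE \mM_H)_k \times (T_\cE \mM_H)_{1-k} \longrightarrow \bC,
\end{equation*}
so matching weight spaces have equal dimensions, which gives $\dim V_\cE = \dim \oplus_{l \leq 0}(T_\cE \mM_H)_l = \tfrac{1}{2} \dim \mM_H = 3g-3$. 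For isotropy, if $u \in (T_\cE \mM_H)_k$ and $v \in (T_\cE \mM_H)_l$ with $k, l \geq 1$, then $\omega_H(u, v)$ transforms with weight $k + l \geq 2$, whereas a weight-$1$ form can only pair weights summing to $1$; hence $\omega_H|_{V_\cE} \equiv 0$ and $V_\cE$ is Lagrangian in $T_\cE \mM_H$.

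To upgrade this to the statement that $W^0_\cE$ itself is Lagrangian in $\mM_H$, I would use the $\bC^\ast$-equivariance of the BB identification: the restricted $2$-form $\omega_H|_{W^0_\cE}$ is a regular, $\bC^\ast$-equivariant $2$-form of weight $1$ on $V_\cE \cong \bC^{3g-3}$, whose linear coordinates all have strictly positive weights. Any such form would have coefficient functions of strictly negative weight, incompatible with regularity, forcing $\omega_H|_{W^0_\cE} \equiv 0$. The main obstacle I anticipate is making the BB decomposition rigorous in the non-projective setting of $\mM_H$: one needs a $\bC^\ast$-equivariant analytic slice at $\cE$, which exists because $\cE$ is a smooth point and the $\bC^\ast$-flow is proper by Simpson's theorem. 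A secondary point is verifying the weight-$1$ property of $\omega_H$ uniformly on $\mM_H$ (not only on $T^\ast \Nstable$), which follows from $\bC^\ast$-equivariance of the hyperk\"ahler structure.
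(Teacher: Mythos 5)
Your argument is correct, but note that the paper does not actually prove this proposition: it is stated with citations to the references \cite{BB, HH22} and used as a known input. What you have written is essentially a self-contained reconstruction of the standard argument from those sources: the \BB decomposition at a smooth fixed point identifies the attracting cell with the positive-weight subspace, the weight-$1$ property of the holomorphic symplectic form forces a perfect pairing between $(T_\cE \mM_H)_k$ and $(T_\cE \mM_H)_{1-k}$ (hence $\dim V_\cE = \tfrac12 \dim \mM_H = 3g-3$ and isotropy of $V_\cE$), and the grading argument on coefficient functions upgrades pointwise isotropy to the vanishing of $\omega_H$ on all of $W^0_\cE$. All of these steps are sound. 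One small imprecision: the weight-$1$ property of $\omega_H$ on all of $\mM_H$ does not follow from ``$\bC^\ast$-equivariance of the hyperk\"ahler structure'' --- the $\bC^\ast$-action does not preserve the hyperk\"ahler metric (only the circle subgroup acts isometrically). The clean justification is that $\mu_\epsilon^\ast \omega_H - \epsilon\, \omega_H$ is a holomorphic $2$-form vanishing on the dense open subset $T^\ast \Nstable$, hence vanishes identically on the smooth locus by the identity theorem. With that adjustment your proof is a complete and standard substitute for the citation.
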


Note in particular that the $\bC^\ast$-fixed point $\cE$ is naturally identified 
with the origin in $V_\cE \simeq \bC^{3g-3}$.

\paragraph{Lagrangians in moduli space $\mM_\lambda$ of $\lambda$-connections}
Simpson studied the $\bC^\ast$-action on 
the Hodge moduli space $\MHodge$ of $\lambda$-connections 
with varying $\lambda \in \bC$ 
and similarly define the upward flows $W_\cE$
to $\bC^\ast$-fixed points $\cE = (E_0, \phi_0)$ in
$\mM_H \equiv \mM_{\lambda = 0}$. 
For $E_0$ stable and $\phi_0 = 0$,
$W_\cE$ consists of all irreducible $\lambda$-connections on $E_0$,
and for $E_0$ destabilised by subbundle $L$,
$W_\cE$ consists of $(E, \nabla_\lambda)$ 
where $E$ is also destabilised by $L$ 
and the zero divisors of $c(L, E, \nabla_\lambda)$ 
and $c(L, E_0, \phi_0)$ coincide.
The restriction of $W_\cE$ to $\mM_H \equiv \mM_{\lambda = 0}$ 
defines the upward flows $W^0_\cE$;
we similarly denote by $W^\lambda_\cE$ the restriction of $W_\cE$ 
to the moduli space $\mM_\lambda$ 
of $\lambda$-connections with fixed $\lambda \in \bC^\ast$.
For $\lambda \neq 0$, Simpson conjectured that
the leaves $W^\lambda_\cE$ are closed \footnote{The construction of Lagrangian leaves hold for all ranks.
Simpson's conjecture on the closed-ness of $W^\lambda_\cE$ 
for $\lambda \neq 0$ was proved for the rank-2 cases in \cite{DS}.
}
in $\mM_\lambda \equiv \mM_{dR}$.

\paragraph{Conformal limit and biholomorphism between Lagrangian leaves}
Let us recall from \cite{Cor88, S88} that the nonabelian Hodge correspondence 
is a diffeomorphism between the Hitchin and de Rham moduli spaces
$$ \mathrm{NAH}: \mM_H \equiv \mM_0 \longrightarrow \mM_{dR} \equiv \mM_1 $$
obtained by solving Hitchin's self-duality equations 
which define stable Higgs bundles.
It is known that for a $\bC^\ast$-fixed stable Higgs bundle $\cE$,
$$ \mathrm{NAH}(\cE) \in W^1_\cE. $$
In \cite{CW}, Collier-Wentworth showed that
there is a biholomorphic embedding 
\begin{equation}\label{pHod}
    p_{Hod}: V_\cE \times \bC \overset{\sim}{\longrightarrow} W_\cE.     
\end{equation}
The restrictions $p^0_{Hod}$ and $p^\lambda_{Hod}$ 
of $p_{Hod}$ to $V_\cE \times \{0\}$ and $V_\cE \times \{\lambda \}$ 
for some $\lambda \in \bC^\ast$ 
\begin{equation}
\begin{tikzcd}
    & 
    &V_\cE \arrow{dl}[swap]{p^0_{Hod}} \arrow{dr}{p^\lambda_{Hod}} 
    & \\
    &W^0_\cE 
    & 
    &W^\lambda_\cE 
\end{tikzcd}
\end{equation}
are biholomorphisms satisfying
\begin{itemize}
\item $p^0_{Hod}$ is the isomorphism $V_\cE \simeq W_\cE$ 
in Proposition \ref{prop-weight-space};
\item $p^\lambda_{Hod} \circ (p^0_{Hod})^{-1} (\cE)$ corresponds to 
$\mathrm{NAH}(\cE)$ scaled by $\lambda$.
\end{itemize}
We then see that the map
$$\mathrm{CL}_\lambda \coloneqq p^\lambda_{Hod} \circ (p^0_{Hod})^{-1}
: W^0_\cE \longrightarrow W^\lambda_\cE$$
is a biholomorphism that can be constructed in two steps:
\begin{enumerate}
\item Use (the scaling of) $\mathrm{NAH}(\cE)$ as a ``reference connection'', 
i.e. a counter-part of the ``origin'' $\cE$; 
\item Use $V_\cE$ as a deformation space on both $\mM_H$ and $\mM_\lambda$ 
to cover $W^0_\cE$ and $W^\lambda_\cE$ respectively.
\end{enumerate}

We have used the notation $\mathrm{CL}_\lambda$ as this map is related to 
the $\hbar$-conformal limit 
defined by Gaiotto \cite{Gai, DFK+}, 
which is the limit of a family of flat connections,
i.e. elements of $\mM_{dR} \equiv \mM_1$, 
induced by the nonabelian Hodge correspondence 
\footnote{More precisely, the family of flat connections is induced by
applying two scaling to the nonabelian Hodge correspondence:
one by the $\bC^\ast$-action on $\mM_H$ and one by 
the so-called real twistor line in $\MHodge$.}
of a Higgs bundle $(E, \phi)$.
Collier-Wentworth showed that for any Higgs bundle $(E,\phi)$ 
with stable $\bC^\ast$-limit,
its $\hbar$-conformal limit is equal to 
$\hbar^{-1} . \mathrm{CL}_{\hbar}((E, \phi))$.
Note that in the formulation of the conformal limit,
the technical step is to show  
existence of the limit of the families of flat connections 
and then compute this limit \cite{CW, DFK+}.
On the other hand, in the work of Collier-Wentworth 
\cite{CW}, the technical part was to show existence of a deformation space 
$V_\cE$ that can be embedded in both $\mM_H$ and $\mM_{dR}$ to carry out Step 2 above.

\paragraph{Examples of conformal limit}
\begin{example}\label{ex-CL-stable-bundle}
Let $E_\lambda = E$ a fixed stable bundle for all $\lambda \in \bC$,
and $\nabla^{\mathrm{ref}}_1$ the holomorphic connection 
defined by the nonabelian Hodge correspondence 
of the Higgs bundle $(E,0)$.
For each $\lambda \in \bC$,
define $\nabla^{\mathrm{ref}}_\lambda = \lambda. \nabla^{\mathrm{ref}}_1$;
in particular $\nabla^{\mathrm{ref}}_0$ is the Higgs bundle $\cE = (E,0)$.
The deformation space is $V_\cE = H^0(X, \mathrm{End}_0(E)) \simeq \bC^{3g-3}$, 
and so the deformation $\delta \in V_\cE$ are trace-free Higgs fields on $E$.
The assignment
\begin{align*}
\mathrm{CL}_\lambda 
: W^0_\cE &\longrightarrow W^\lambda_\cE \nonumber \\
\left( E, \delta \right) &\longmapsto  
\left(E, \nabla^{\mathrm{ref}}_\lambda + \delta  \right) 
\end{align*}
then defines a biholomorphic map between the space of Higgs fields on $E$ 
and the space of irreducible $\lambda$-connections on $E$. 
\end{example}

\begin{example}\label{ex-CL-Hitchin-section}
For $\lambda \in \bC^\ast$, 
let $E'$ 
be the bundle realised as the non-trivial extension of $K^{-1/2}_X$ 
by $K^{1/2}_X$, which is unique up to scaling, 
and let $E_0 = K^{1/2}_X \oplus K^{-1/2}_X$.
Let $\cE = (E_0, \phi_0)$ 
with $\phi_0 = \left( \begin{smallmatrix} 0 & 0 \\ 1 & 0 
\end{smallmatrix} \right)$, 
and denote by $(E', \nabla\connref_\lambda)$ 
the $\lambda$-scaling of the nonabelian Hodge correspondence 
$(E', \nabla\connref_1)$ of $\cE$.
Note that $(E', \nabla\connref_1)$ 
is often called the ``uniformising oper'', which in coordinates on $X$ 
induced by coordinates on its universal covering -- the upper-half plane -- 
takes the form $\nabla\connref_1 = \partial + 
\left( \begin{smallmatrix} 0 & 0 \\ 1 & 0 \end{smallmatrix} \right)$ 
in local frames adapted to $K^{1/2}_X$.
Let $V_\cE$ be the space of quadratic differentials on $X$ and,
for $q \in V_\cE$, abusing the notation denote by 
$\delta_q = \left( \begin{smallmatrix}
0 & q \\ 0 & 0 \end{smallmatrix} \right)$ 
both the corresponding Higgs fields on $E$ and $E'$.
The assignment
\begin{align*}
\mathrm{CL}_\lambda 
: W^0_\cE &\longrightarrow W^\lambda_\cE \nonumber \\
\left( E_0, \phi_0 + \delta_q \right) 
&\longmapsto \left( E', \nabla\connref_\lambda + \delta_q \right)
\end{align*}
defines a biholomorphic map 
between the Hitchin section and the space of $\lambda$-opers
\begin{align}
    W^0_\cE = 
    \left\{ \left( E_0, \begin{pmatrix} 0 & q \\ 1 & 0 \end{pmatrix} \right) \right\}
    \longrightarrow 
    W^\lambda_\cE = 
    \left\{ \left( E', 
    \lambda\partial + \begin{pmatrix} 0 & q \\ 1 & 0 \end{pmatrix} \right) \right\}
\end{align}
(upon choosing a square-root $K^{1/2}_X$).
\end{example}

\subsubsection{Families of $\lambda$-connections with fixed zero divisor $c(E, L, \nabla_\lambda)$}
Examples \ref{ex-CL-Hitchin-section} and \ref{ex-CL-stable-bundle} demonstrate 
the fact that for a family $\{(E_\lambda, \nabla_\lambda) \}_{\lambda \in \bC}$ of 
$\lambda$-connections, if one wants to keep fixed one of the data
\begin{itemize}
    \item the zero divisor of $c \equiv c(L_\lambda \hookrightarrow E_\lambda, \nabla_\lambda)$,
    \item the isomorphism class of $E_\lambda$
\end{itemize}
for all $\lambda \in \bC$,
then the other data has a ``jump'' as $\lambda \rightarrow 0$  
(cf. \eqref{Serre-constraint}).
In particular, if the family is defined by $\bC^\ast$-action, 
then $c\neq 0$ and $\divisor(c)$ is fixed 
only if $E_\lambda$ are destabilised by $L$.

Let us observe that, however, 
we could define a family $\{(E_\lambda, \nabla_\lambda) \}_{\lambda \in \bC}$ where 
$\divisor(c)$ is fixed 
and $E_\lambda$ are generically stable for all $\lambda \in \bC$ 
provided that we are willing to let $E_\lambda$ vary. 
The following is a simple example.

\begin{example}\label{ex-family-lambda-conn}
Let $E \equiv E_0$ be a stable bundle,  
$L$ a maximal subbundle of $E_0$, and $\phi$ a Higgs field on $E$. 
Denote by $\bmx_0 \equiv (L \hookrightarrow E) \in H^1(X, L^2)$ 
an extension class realising $E$ and $c \equiv c(\bmx_0, \phi)$.
Pick an extension class $\bmx_1\in H^1(X, L^2)$ 
which satisfies $\langle \bmx_1, c \rangle = 1$
(so in particular it not contained 
in the hyperplane $\ker(c) \subset H^1(X, L^2) $).
For $\lambda \in \bC$,
consider a family of bundles $E_\lambda$ realised by extension classes 
$\bmx_\lambda$ where
\begin{align}
&\bmx_\lambda \coloneqq \bmx_0 
+ \lambda \deg(L) \bmx_1,
&\lambda \in \bC.
\end{align}
A generic element $E_\lambda$ in this family is stable 
and admit $L$ as a maximal subbundle \cite{Lange-Narasimhan};
in this case, $E_\lambda$ is in particular not ``overcounted'' in $H^1(X, L^2)$, 
namely $h^0(X, L^{-1} E_\lambda) = 1$.
Suppose that the condition $h^0(X, L^{-1} E_\lambda) = 1$
holds for all $\lambda \in \bC$. 
It follows from Proposition \ref{prop-surjection-c-2} that
we can define a family $\{(E_\lambda, \nabla_\lambda) \}_{\lambda \in \bC}$
where $c(E_\lambda, L, \nabla_\lambda) = c$.    
\end{example}

Note that this family is not unique as the choice of $\bmx_1$ 
is not unique -- one can in fact form a basis for $H^1(X, L^2)$
consisting of such elements -- 
and even upon fixing such $\bmx_1$
there exist different choices of $\nabla_\lambda$ whose 
difference is an $L$-invariant Higgs field -- 
these are of dimension $h^0(X, E^\ast LK)$ 
(cf. Propositions \ref{prop-surjection-c-Higgs} and \ref{prop-surjection-c-2})

\paragraph{Deformation spaces in the Hitchin moduli space}
Consider a reduced divisor $\bu = \sum_{n=1}^{3g-3} u_n$ on $X$, 
i.e. all $u_n$ have multiplicity $1$.
The deformation space associated to $\bu$ is 
the space $\mM_H(\bu) \subset \mM_H$ defined by 
$$\mM_H(\bmu) \coloneqq 
\{ (E,\phi) \in \mM_H \mid \exists L \hookrightarrow E, \hspace{2pt}
\mathrm{div}(c(E, L, \phi)) = \bmu \}. $$
The subbundle $L$ in the definition of $\mM_H(\bu)$ is a   
square-root of $K_X \otimes \mathcal{O}_X(- \bu)$.
For each $(E, \phi) \in \mM_H(\bu)$ there exists a unique 
subbundle $L$ of $E$ such that 
$c(E, L, \phi)$ has zero divisor $\bu$,
and so $\mM_H(\bu)$ has $2^{2g}$ connected components 
each corresponding to a choice of the square-root.
Consider the dense subset $\mM^s_H(\bu) \subset \mM_H(\bu)$ 
defined by smooth spectral curves. 
The composition of $\SOV$ 
with the pull-back of $(E, \phi) \in \mM_H(\bu)$ to $T^\ast_{(E,L)}\N$
defines a map
$$ \SOV(\bu): \mM^s_H(\bu) \longrightarrow \bC_\bu $$
where 
$$\bC_\bu \coloneqq \prod_{i=1}^{3g-3} T^\ast_{u_i} X  
\simeq \bC^{3g-3}$$
is a Lagrangian in $(T^\ast X)^{[3g-3]}$. 
Denote by $\mM^{ss}_H(\bu)$ the open dense subset in $\mM^s_H(\bu)$ 
defined by Higgs bundles $(E, \phi)$ where $E$ is stable. 
\begin{proposition}
Let $\bu = \sum_{n=1}^{3g-3} u_n$ be a $Q$-generic divisor. Then
\begin{enumerate}
    \item The map $\SOV(\bu)$ is a $2^{2g}:1$ unbranched holomorphic covering 
    onto its image which is dense in $\bC_\bu$. 
    \item $\mM^{ss}_H(\bu)$ is a Lagrangian in $\mM_H$.
\end{enumerate}
\end{proposition}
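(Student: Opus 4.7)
The plan is to combine the Baker--Akhiezer divisor construction (Theorem \ref{thm-BA}) with the Poisson property of $\SOV$ from Theorem \ref{thm-SOV-Higgs}, producing an explicit inverse for part (1) and leveraging the equidimensional Poisson structure for part (2).

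For part (1), I would construct a holomorphic inverse to $\SOV(\bu)$ over a dense open subset of $\bC_\bu$. Given $((u_1, v_1), \dots, (u_{3g-3}, v_{3g-3})) \in \bC_\bu$, the identity $\det(\phi)(u_n) = -v_n^2$ (coming from $c(u_n) = 0$ in the local form of $\phi$) together with $Q$-genericity of $\bu$, which makes the evaluation map $H^0(X, K_X^2) \to \bC^{3g-3}$ an isomorphism, uniquely determines $q = \det(\phi)$. For a generic choice of the $v_n$ this $q$ has only simple zeroes, so the associated spectral curve $S \overset{\pi}{\rightarrow} X$ is smooth, and the $v_n$ pick out sheets $\tilde{u}_n \in \pi^{-1}(u_n)$ defining an effective divisor $\tilde{\bu}$ on $S$. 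Combined with any of the $2^{2g}$ square-roots $L$ of $K_X \otimes \mathcal{O}_X(-\bu)$, the direct-image direction of Theorem \ref{thm-BA} produces a unique triple $(E, L, \phi) \in \mM^s_H(\bu)$ whose BA divisor is $\tilde{\bu}$. All steps are holomorphic in the parameters, so one obtains $2^{2g}$ local holomorphic sections of $\SOV(\bu)$; this shows it is an unbranched holomorphic covering of degree exactly $2^{2g}$ onto its image, and density of the image follows because its complement is contained in the proper analytic subset of $\bC_\bu$ where the induced quadratic differential fails to have simple zeroes.

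For part (2), I would use that $\SOV : T^\ast \N \dashrightarrow (T^\ast X)^{[3g-3]}$ is, by Theorem \ref{thm-SOV-Higgs}, a dominant Poisson map between symplectic manifolds of equal dimension $6g-6$ (since $-2d = g-1$). Non-degeneracy of the target symplectic form forces $\SOV^\ast$ to pull back non-degenerate $2$-forms to non-degenerate $2$-forms on the open dense locus where the differential of $\SOV$ is an isomorphism; hence $\SOV$ is a local symplectomorphism there. The subspace $\bC_\bu = \prod_n T^\ast_{u_n} X$ is the conormal fibre over the point $\bu \in X^{[3g-3]}$ and is therefore Lagrangian in $(T^\ast X)^{[3g-3]}$. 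Its preimage is Lagrangian in $T^\ast \N$ on the étale locus, and the symplectic identification of the open dense subset of $T^\ast \N|_{\N^{\mathrm{im}}}$ with the moduli of triples $(E, L, \phi)$ having stable $E$ (cf.\ the discussion following Proposition \ref{prop-immersion}), composed with the forgetful map to $\mM_H$, transports this Lagrangian onto $\mM^{ss}_H(\bu)$. The dimension $3g-3 = \tfrac{1}{2}\dim \mM_H$ confirms half-dimensionality.

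The main technical obstacle is handling all genericity loci in a unified way: one must verify that $Q$-genericity of $\bu$ is strong enough to simultaneously guarantee that a dense subset of $\bC_\bu$ corresponds to smooth spectral curves, that the direct-image bundles produced by Theorem \ref{thm-BA} are stable (so that they give points of $\mM^s_H(\bu)$ and not merely of the stack), and that $\SOV$ is étale at the resulting triples, so that the inverse construction of part (1) and the symplectic pullback argument of part (2) apply over the same open dense locus. Once this coherence of open sets is established, both statements follow from results already available in the paper.
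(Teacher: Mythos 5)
Your proposal is correct and follows essentially the same route as the paper: part (1) is the invertibility of the Baker--Akhiezer construction (Theorem \ref{thm-BA}) with the $2^{2g}$ square-roots of $K_X\otimes\mathcal{O}_X(-\bu)$ accounting for the covering degree, and part (2) combines the Poisson property of $\SOV$ (Theorem \ref{thm-SOV-Higgs}) with the fact that $i:\N\dashrightarrow\Nstable$ is a local isomorphism precisely away from the $Q$-special locus (Proposition \ref{prop-immersion}). Your write-up merely makes explicit what the paper leaves terse, e.g.\ that $Q$-genericity makes the evaluation map $H^0(X,K_X^2)\to\bC^{3g-3}$ an isomorphism so that a point of $\bC_\bu$ determines the spectral curve.
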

\begin{proof}
\begin{enumerate}
\item 
Observe that any point in $\bC_\bu$ defines a spectral curve,
and in particular an open dense subset of $\bC_\bu$ defines 
smooth spectral curves.
The claim now comes from the invertibility of the construction of Baker-
Akhiezer divisors (cf. Theorem \ref{thm-BA})) up to tensoring with 
square-roots of $\mathcal{O}_X$. 
\item The symplectic structure of $\mM_H$ at the loci defined by $(E, \phi)$ 
where $E$ is stable coincides with the canonical symplectic structure 
of $T^\ast \Nstable$. 
The claim now follows from the Theorem \ref{thm-SOV-Higgs} 
and the fact that $i: \N \dashrightarrow \Nstable$ is a local isomorphism 
away from the loci that makes $\bu$ a $Q$-special divisor 
(cf. Proposition \ref{prop-immersion}).
\end{enumerate}
\end{proof}

\begin{remark}
\begin{enumerate}
\item Given a Higgs bundle $(E,\phi) \in \mM^{ss}_H(\bu)$, 
its scaling $\epsilon. (E, \phi)$ for some $\epsilon\in \bC^\ast$
is also contained in $\mM^{ss}_H(\bu)$.
The closure of $\mM^{ss}_H(\bu)$ in $\mM_H$ therefore 
contains the $\bC^\ast$-fixed point $(E,0)$ in the nilpotent cone.
\item It is natural to expect that the Lagrangian property of 
$\mM^{ss}_H(\bu)$ extends to all of $\mM_H(\bu)$.
\end{enumerate}
\end{remark}

\paragraph{Analogues of deformation spaces in the moduli space of $\lambda$-connections?}
For a fixed $\lambda \neq 0$, 
in the moduli space $\mM_\lambda$ of $\lambda$-connections 
one can consider an analogous subspace
\begin{equation*}
    \mM_{\lambda}(\bmu) \coloneqq 
    \{ (E,\nabla_\lambda) \in \mM_{\lambda} 
    \mid \exists L \hookrightarrow E, \hspace{2pt}
    \mathrm{div}(c(E, L, \nabla_\lambda)) = \bmu \},
\end{equation*}   
which also has $2^{2g}$ connected components 
corresponding to square-roots of $K_X \otimes \mathcal{O}_X(- \bu)$.
The map $\SOVdR$ then allows us to define 
the analogue of $\SOV(\bu)$, namely 
$$ \SOVdR(\bu): \mM_{\lambda}(\bu) \longrightarrow 
\bC_{\lambda, \bu} \coloneqq 
\prod_{i=1}^{3g-3} {\mM'_{\mathrm{op}}}\mid_{u_i} $$
where $\mM'_{\mathrm{op}}$ is the affine bundle on $X$ modeled over $T^\ast X$
defined via \eqref{residue-transform-scaled}. By construction, 
the target space $\bC_{\lambda, \bu}$ 
is an affine space modeled over the vector space 
$\bC_\bu \simeq \bC^{3g-3}$ and defines a Lagrangian in 
$\mM_{\mathrm{op}}^{3g-3}$. 

Suppose now $\bu$ is $Q$-generic.
Note that by Proposition \ref{prop-characterise-opers}, 
a point in $\bC_{\lambda, \bu}$ corresponds 
uniquely to an oper with apparent singularities at $\bu$. 
By Proposition \ref{prop-inverse-SOVdR},
one can find a $\lambda$-connection $(E, \nabla_\lambda)$ 
that induces such an oper, and if $(E, \nabla_\lambda) \in \mM_\lambda$
(i.e. it is irreducible)
then there are exactly $2^{2g}$ such $\lambda$-connections, 
which differ from each other by a twist by a square-root of $\mathcal{O}_X$.
Denote by $\bC^s_\bu \subset \bC_\bu$ the image of $\mM^s_H(\bu)$ along $\SOV(\bu)$
(note that $0 \notin \bC^s_\bu)$.
We expect that there exists a dense subspace $Y \subset \bC_{\lambda, \bu}$
such that 
\begin{itemize}
\item $Y$ is biholomorphic to $\bC^s_\bu$;
\item the preimage $\mM_\lambda'(\bu) \coloneqq (\SOVdR(\bu))^{-1}(Y)$ 
is a Lagrangian in $\mM_\lambda$ and is biholomorphic to $\mM_H^{s}(\bu)$.
\end{itemize}
\vspace{-1mm}
If this is true, then we can regard $\bC^s_\bu$ as a deformation space 
which is holomorphically embedded in both $\mM_H$ and $\mM_\lambda$.
A (non-canonical) choice of family of $\lambda$-connections as in Example 
\ref{ex-family-lambda-conn} 
would then define an analogue of the embedding $p_{Hod}$ defined in \eqref{pHod}.

\vspace{4mm} \noindent
\textit{Acknowledgment:}
I would like to thank Jörg Teschner for helpful discussions 
and feedback 
-- this work comes out from our joint research program -- 
and Vladimir Roubtsov for discussions on related subjects.
Part of this work is extracted from my PhD research  
which was funded by the DFG Cluster of Excelence ``Quantum Universe'' 
at the University of Hamburg.
I also thank the  Oberwolfach Research Institute for Mathematics 
and Max Planck Institute for Mathematics in Bonn
for funding.

\noindent
\textit{Contact:} \textit{duongdinh.mp@gmail.com}

\vspace{-3mm}

\end{document}